\documentclass[a4paper]{amsart}
\usepackage{amsmath,amssymb}
\usepackage{latexsym}
\usepackage[all]{xy}
\usepackage[latin1]{inputenc}
\usepackage{url}



\usepackage[colorlinks=true]{hyperref}



\newcommand{\ie}{{\sl i.e.}}
\newcommand{\eg}{{\sl e.g.}}
\newcommand{\loccit}{loc. cit.}

\newcommand{\cC}{{\mathcal C}}
\newcommand{\cE}{{\mathcal E}}
\newcommand{\cO}{{\mathcal O}}

\newcommand{\bbZ}{{\mathbb Z}}
\newcommand{\bbP}{{\mathbb P}}
\newcommand{\bbA}{{\mathbb A}}

\newcommand{\Id}{\mathrm{Id}} 

\newcommand{\HHom}[1]{[#1]} 
\newcommand{\TTens}{\otimes} 
\newcommand{\HHomq}[1]{[#1]'} 

\newcommand{\Hom}{{\rm Hom}} 

\newcommand{\W}{{\rm W}} 


\newcommand{\pair}[1]{\{ #1\}} 

\newcommand{\Kos}{{\mathcal K}} 

\newcommand{\one}{{\mathbf 1}}

\newcommand{\dual}{{\sharp}} 
\newcommand{\dualq}{\sharp'} 
\newcommand{\bid}{\varpi} 
\newcommand{\bidq}{\varpi'} 
\newcommand{\q}{{\pi}} 
\newcommand{\fp}{{\alpha}} 
\newcommand{\fh}{{\beta}} 
\newcommand{\fhq}{\beta'} 
\newcommand{\fg}{{\lambda}} 
\newcommand{\ff}{{\mu}} 
\newcommand{\ssp}{{\theta}} 
\newcommand{\rr}{{\zeta}} 
\newcommand{\dd}{{\tau}} 
\newcommand{\eps}{{\varepsilon}} 
\newcommand{\gam}{{\gamma}} 

\newcommand{\Reg}{{\mathcal Reg}} 
\newcommand{\Sch}{{\mathcal Sch}} 
\newcommand{\SmPr}{{\mathcal SmPr}} 

\newcommand{\LTens}{{\otimes^{\rm{L}}}} 
\newcommand{\coh}{Q} 
\newcommand{\Rcoh}{{\rm R}Q} 
\newcommand{\Lf}{{\rm L}f} 
\newcommand{\Rf}{{\rm R}f} 
\newcommand{\LL}{{\rm L}} 
\newcommand{\RR}{{\rm R}} 

\DeclareMathOperator{\RHom}{\mathrm{RHom}} 
\DeclareMathOperator{\SHom}{{\mathcal Hom}} 
\DeclareMathOperator{\Spec}{\mathrm{Spec}}  
\DeclareMathOperator{\Mod}{\mathrm{Mod}}  
\DeclareMathOperator{\Coh}{\mathrm{Coh}}  
\DeclareMathOperator{\Qcoh}{\mathrm{Qcoh}}  
\newcommand{\bVect}[1]{\mathrm{C}_b(\mathrm{Vect}(#1))}  
\newcommand{\bCoh}[1]{\mathrm{C}_b(\mathrm{Coh}(#1))}  

\DeclareMathOperator{\Pic}{\mathrm{Pic}}  

\newcommand{\rel}{\omega} 
\newcommand{\can}{\omega} 

\newcommand{\qis}{qis} 


\newcounter{mydiagram}
\setcounter{mydiagram}{0}



\theoremstyle{definition}
\newtheorem{defi}{Definition}[section]

\theoremstyle{plain}
\newtheorem{theo}[defi]{Theorem}
\newtheorem{coro}[defi]{Corollary}
\newtheorem{prop}[defi]{Proposition}
\newtheorem{lemm}[defi]{Lemma}
\newtheorem{nota}[defi]{Notation}

\theoremstyle{remark}
\newtheorem{rema}[defi]{Remark}
\newtheorem{exam}[defi]{Example}

\title{Push-forwards for Witt groups of schemes}
\author{Baptiste Calm{\`e}s and Jens Hornbostel}

\begin{document}
\bibliographystyle{amsplain}

\begin{abstract}
Using suitable closed symmetric monoidal structures on derived categories of schemes, as well as adjunctions of the type $(\Lf^*,\Rf_*)$ and $(\Rf_*,f^!)$ (\ie\ Grothendieck duality theory), we define push-forwards for coherent Witt groups along proper morphisms between separated noetherian schemes. 
We also establish 
fundamental theorems for these push-forwards (\eg\ base
change and projection formula) and provide
some computations.
\end{abstract}

\maketitle

\tableofcontents

\section*{Introduction}
Push-forwards, also known as transfers or norm maps,
exist for many cohomology theories over schemes, \eg\ for
$K$-theory, (higher) Chow groups and algebraic cobordism. They are undoubtedly a 
useful tool for understanding and computing those cohomology theories. 
The present article is about the construction of such push-forward maps for the coherent Witt groups of schemes defined in the seminal work of Balmer\footnote{The modern definition of Witt groups using triangulated categories with
dualities \cite{Balmer00} can be applied either to the derived category
of complexes of locally free sheaves to obtain ``locally free" Witt groups or
to the derived category of complexes with coherent cohomology to obtain
``coherent" Witt groups. As with $K$-theory, it is the latter that is naturally
covariant along proper morphisms, as we prove in this article.
All schemes considered are over $\bbZ[1/2]$ so that the
derived categories involved are $\bbZ[1/2]$-linear and the machinery of
triangular Witt groups applies.}. 
A reader familiar with cohomology theories might think that constructing a
push-forward is probably straightforward. He (or she) should be warned: 
Witt groups are
not an oriented cohomology theory. In particular, push-forwards are, in some sense, only conditionally defined. 
For example,
when $X$ and $Y$ are connected noetherian schemes of finite Krull dimension,
smooth over a field, the Witt groups depend on a line bundle $L$ used to define
the duality and the push-forward takes the form 
(see Theorem \ref{PushForward3_theo})
$$\W^{i+\dim X}(X,\omega_X \otimes f^* L) \to \W^{i+\dim Y}(Y,\omega_Y \otimes L)$$ 
where $\omega$ is the canonical bundle (the highest nontrivial
exterior power of the cotangent
bundle) and $L$ is an arbitrary line bundle on $Y$. In particular, if we pick
a line bundle $K$ over $X$, there is no push-forward starting
from $\W^i(X,K)$ if $K$ is not isomorphic to $\omega_X \otimes f^* L$ for some
$L$ (up to a square $M^{\otimes 2}$, as $\W^i(Y,K) \cong \W^i(Y,K \otimes M^{\otimes 2})$ so only the class of $K$ in 
$\Pic(X)/2$ really matters). 
This fundamental difference with oriented
cohomology theories, where the push-forward is always possible, significantly changes classical computations, as one sees \eg\ in \cite{Gille03},
\cite{Walter03_pre}, \cite{Balmer07_pre}. The groups $\W^{\dim X -i}(X,\omega_X)$ can be considered as a homology theory analogous to a non oriented complex homology theory in topology, but the construction of the push-forward here relies on triangulated monoidal methods. 

\medskip
Besides this article and its precursors on regular schemes \cite{Calmes04_pre} and
\cite{Calmes06_pre},
there are already several articles available on the construction of push-forwards in special cases. In
\cite{Gille03c}, Gille defined push-forwards along finite morphisms in the
affine case. His approach is quite elementary in the sense that he uses direct
computations involving explicit injective resolutions etc. It is useful
to get a hand on concrete forms. In \cite{Nenashev07} and 
\cite{Nenashev09},
Nenashev adapts the oriented cohomology techniques of Panin and Smirnov to the
non-oriented case of locally free Witt groups. He thus obtains push-forwards
along projective morphisms between smooth quasi-projective varieties
over fields. Still another approach using stable ${\bbA}^1$-representability 
of Witt groups can be found in \cite{Hornbostel07}. We understand that there is also some unpublished work of C. Walter
on this subject.
Our approach is different, and it applies to a much larger class of situations: it uses derived functors and Grothendieck duality, so the dualities that appear are canonical and do not depend on choices as the other constructions mentioned above. If necessary, 
choices can be made in order to compare our constructions with others
in the special cases where the latter are defined. 
Fundamental properties such as base change are proved in a simple and conceptual way, and we furthermore obtain the full generality of singular schemes. An example of how those
properties can be used for very concrete computations can be found in the
computation of Balmer and the first author of the Witt group of Grassmann
varieties \cite{Balmer07_pre}. 

\medskip
Let us now explain why we use triangulated {\em monoidal} methods, even though there is no mention of a tensor product in the definition of Witt groups of triangulated categories.
In fact, the proof of many results amounts to verifying that a certain number of diagrams of morphisms of functors such as \eqref{main_diag} below are commutative. 
It might be possible to check this by hand in every concrete situation; 
however, it would be extremely painful: try it for example in the simple case of a regular closed immersion. 
Hence, some kind of systematic method is needed. Our solution to this problem is the use of a convenient setting involving a tensor product, an adjoint internal Hom, functors of the type $f^*$, $f_*$ and $f^!$ and the adjunction relationships between them, that is some variant of the so-called Grothendieck six functors formalism in an arbitrary triangulated category. In this setting, we have shown in \cite{Calmes09} that all the necessary diagrams commute, whereas this article exploits the existence of this structure on various concrete
triangulated categories.
Here is a brief sketch of what is involved: 
Witt groups are defined for triangulated categories $\cC$
equipped with a duality, \ie\ with a contravariant endofunctor $D$ 
on $\cC$ together with a bidual isomorphism of functors $\Id \to D^2=D \circ D$
satisfying $D(\bid_A) \circ \bid_{DA}=\Id_{DA}$ for all objects $A$
of $\cC$.  A morphism between Witt groups is naturally induced by an exact functor $F:\cC_1 \to \cC_2$ (both triangulated categories with dualities resp. $(D_1,\bid_1)$ and $(D_2,\bid_2)$) equipped with an isomorphism of exact functors $\phi:FD_1 \to D_2 F$ which explains how $F$ ``commutes" with the dualities. The fact that the isomorphism $\phi$ is not the identity requires the analysis of its interactions with the other morphisms of functors involved. It is the central problem to solve when proving the main theorems. To start with, this morphism $\phi$ should make the diagram
\begin{equation} \label{dualPres_diag}
\xymatrix{
F \ar[r]^{F\bid_1} \ar[d]_{\bid_2 F} & FD_1 D_1 \ar[d]^{\phi D_1} \\
D_2 D_2 F \ar[r]^{D_2 \phi} & D_2 F D_1
}
\end{equation}
commutative.
In \cite{Calmes09}, we discuss such morphisms of functors and diagrams in
the setting of closed symmetric monoidal categories. More precisely, let
$\cC_1$ and $\cC_2$ be closed symmetric monoidal categories, with tensor
product denoted by $\TTens$ and internal Hom denoted by $\HHom{-,-}$. Given
an object $K$, the functor $D_K:=\HHom{-,K}$ together with the canonical
natural transformation $\bid_K: \Id \to D_K^2$ defines a weak duality functor. Starting with an exact functor $f_*:\cC_1 \to \cC_2$ which has a left adjoint $f^*$ (which is monoidal) and a right adjoint $f^!$, there is a natural transformation
$$\rr: f_*D_{f^!K} \to D_K f_*$$
such that the diagram \eqref{dualPres_diag}, which becomes
\begin{equation} \label{main_diag}
\xymatrix{
f_* \ar[r]^-{f_*\bid_{f^!K}} \ar[d]_{\bid_{K}f_*} & f_*D_{f^!K} D_{f^!K} \ar[d]^{\rr D_{f^!K}} \\
D_K D_K f_* \ar[r]^{D_{f^!K} \rr} & D_{f^!K} f_* D_K
}
\end{equation}
commutes, as shown in \cite{Calmes09}. Therefore, provided $\bid_K$, $\bid_{f^!K}$ and $\rr$ are isomorphisms,
$f_*$ induces a morphism of Witt groups
$$\W(\cC_1,D_{f^!K},\bid_{f^!K}) \to \W(\cC_2,D_K,\bid_K)$$
The present article is a description of how to apply this abstract closed monoidal setting to well-chosen derived categories of schemes, with the derived functors $\Lf^*$, $\Rf_*$ and its right adjoint $f^!$ constructed by Grothendieck duality theory. 

\medskip
The main result of this article is the definition of a push-forward along a proper morphism $f:X \to Y$ of separated noetherian schemes. In its most general form (Theorem \ref{PushForward_theo}), this push-forward is a morphism
$$\xymatrix{\W^i(X,f^! K) \ar[r]^{f_*} & \W^i(Y,K)}$$ 
where $K$ is a dualizing complex on $Y$. This push-forward is induced by the
derived functor $\Rf_*$ and a suitable morphism of functors $\rr_K: \Rf_*
D_{f^! K} \to D_{K} \Rf_*$. This means that a form on a complex $A$ for the
duality $D_{f^! K}$ is sent to a form on the complex $\Rf_* A$ for the duality
$D_K$. We 
further prove that this push-forward respects composition (Theorem \ref{compoPushForward_theo}).

Similarly, for morphisms of finite tor-dimension $f$ we define a pull-back
(Theorem \ref{PullBack_theo}),
that is a morphism
$$\xymatrix{\W^i(Y,K) \to \W^i(X, \Lf^* K)}$$
respecting composition (Theorem \ref{compoPullBack_theo}).

We also prove a flat base change theorem (\ref{baseChange_theo}) relating the
push-forward and the pull-back in cartesian diagrams and a projection formula
in the case of regular schemes (Theorem \ref{ProjFormula_theo}). 
Some explicit computations of transfers are provided in the last section.

\medskip
We assume that schemes are separated and noetherian for the following
technical reasons: quasi-compact and separated are necessary to have an equivalence
between the derived category of quasi-coherent sheaves $D(\Qcoh(X))$ and the
subcategory  $D_{qc}(X)$ of complexes with quasi-coherent homology in the
derived category of all sheaves. Noetherian is used to ensure that the
injectives in $\Qcoh(X)$ 
remain injective in the category of all $\cO_X$-modules. Working without those assumptions would probably require significant improvements in the theory of Grothendieck duality; this is beyond the scope of this article which only intends to apply this theory to Witt groups.

\medskip
Two main cases are discussed. The easier case is when all schemes considered
are regular. Then their derived category $D_{b,c}$ of complexes with coherent and
bounded homology is preserved under the derived tensor product $\LTens$
and under $\RHom$, the derived internal Hom. This endows $D_{b,c}$ with a natural
structure of symmetric monoidal category. The dualizing complexes (see
Definition \ref{dualizing_defi}) are line bundles or shifted line bundles. The
coherent Witt groups are thus defined using the duality $\RHom(-,L)$ for some
line bundle $L$. Furthermore, the derived pull-back $\Lf^*$ for any morphism,
the derived push-forward $\Rf_*$ and its right adjoint $f^!$ for proper
morphisms also preserve $D_{b,c}$. Hence the abstract formalism of
\cite{Calmes09} applies on the nose, and we therefore obtain push-forwards and their classical properties of composition, base change and projection. 

The general case, when schemes are not assumed to be regular, is more complicated. 
Indeed, in this case $\LTens$ or $\RHom$ do not necessarily preserve bounded homology
and so there is no nice closed symmetric monoidal category structure on the category $D_{b,c}$ as the following affine example illustrates. Choose a field $k$ and set $X=\Spec (k[\epsilon]/\epsilon^2)$. Then consider the complex with $k$ concentrated in degree zero. A projective resolution of $k$ is given by
$$\xymatrix{\cdots \ar[r]^{.\epsilon} & k[\epsilon]/\epsilon^2 \ar[r]^{.\epsilon} & k[\epsilon]/\epsilon^2 \ar[r]^{.\epsilon} & k[\epsilon]/\epsilon^2 \ar[r] & k \ar[r] & 0}$$
Thus $k \LTens k$ is the complex
$$\xymatrix{\cdots \ar[r]^0 & k \ar[r]^{0} & k \ar[r]^{0} & k \ar[r] & 0}$$
which has unbounded homology.
On the other hand, the unbounded derived category $D_{qc}$ of complexes with
quasi-coherent cohomology admits a closed symmetric monoidal structure; this
is not completely obvious, see Theorem \ref{ClosedMonStructures_theo}.
But this category is not
suitable to define Witt groups, because there is no obvious (strong) duality
on it and, anyway, as Eilenberg swindle type of arguments show for $K$-theory,
unbounded categories are not the good framework to define cohomology
theories. Still, the closed symmetric monoidal structure on $D_{qc}$ is
useful to prove systematically the commutativity of diagrams such as
\eqref{dualPres_diag}. That is, we can use the framework of
\cite{Calmes09} to prove this commutativity in the large closed symmetric
monoidal category $D_{qc}$ and then notice that all functors used in
the definition of the duality ($\RHom(-,K)$ for some suitable $K$) and the
push-forward ($\Rf_*$) actually restrict to $D_{b,c}$ under mild
additional assumptions. Thus, the commutativity
of the diagrams involved is proved in large categories by
general closed symmetric monoidal methods, but the diagrams actually 
often live in a smaller
category whose Witt groups are interesting. 

A technical point arising is the construction of the functors involved in the symmetric monoidal structure as well as $\Lf^*$, $\Rf_*$ and $f^!$ on the unbounded derived category $D_{qc}$. This relies on the work of Spaltenstein \cite{Spaltenstein88}, the articles of Neeman \cite{Neeman96,Neeman10} and on the very useful notes of Lipman \cite{Lipman09}, which are a reference on Grothendieck duality and contain very detailed explanations of all constructions.

\medskip
The article is organized as follows. In Section \ref{SymmMonCat_sec}, we
recall the closed symmetric monoidal structures of the different categories
we use. In Section \ref{WittGroups_sec}, we use these structures to define
triangulated categories with dualities and related Witt groups. Section
\ref{functors_sec} contains results on the derived functors $\Lf^*$ and
$\Rf_*$ and on Grothendieck duality, \ie\ the construction of the right
adjoint $f^!$ of $\Rf_*$. Section \ref{pushDefi_sec} contains the main result
of the paper, namely Theorem \ref{PushForward_theo}. It explains how to use
\cite{Calmes09} to obtain the definition of push-forwards for the coherent
Witt groups of schemes. It also contains a definition of the finite
tor-dimension (\eg\ flat) pull-back (Theorem \ref{PullBack_theo}). Section
\ref{properties_sec} explains the behavior of the push-forward and the
pull-back under composition, and proves a base change formula relating
them. Section \ref{reformulations_sec} explains possible reformulations of the
push-forward in different contexts and Section \ref{examples_sec} studies
in detail the push-forward in the case of finite field extensions, regular embeddings 
and projective bundles, which is useful for computations and also
allows a comparison with the transfer maps of other authors 
when they are defined. Everything
except some specific computations in the last section works
both for Grothendieck-Witt groups $GW$ and Witt groups $W$.
For simplicity, we stated all results for $W$ only.
\medskip

The present article is a generalization of the main results of
the unpublished preprints \cite{Calmes04_pre} and \cite{Calmes06_pre} 
on regular schemes. 
To keep this article short, some
applications established in \cite{Calmes06_pre}
(d\'evissage/localization, Witt motives and partial results about their
decomposition for cellular varities) are not included here.
Most important, all the abstract theorems
about triangulated symmetric monoidal functors
and adjunctions between them which are crucial for
proving the theorems of this article
are proven in the long article
\cite{Calmes09}.

\medskip
We would like to thank Amnon Neeman for his precise explanations about his approach to dualizing complexes; it enabled us to generalize earlier versions of the results. We would also like to thank Paul Balmer and Bruno Kahn for their constant support, and the referee for his careful reading and detailed comments. 

\section{Closed symmetric monoidal categories} \label{SymmMonCat_sec}

Let $\Sch$ denote the category of separated noetherian schemes
and  $\Reg$ its full subcategory of regular schemes. For any scheme $X$, let $K(X)$ (resp. $D(X)$) denote the homotopy (resp. derived) category of homological complexes of $\cO_X$-modules (without any restriction). We then add subscripts $+$ for bounded above, \ie\ bounded where the differentials go, $-$ for bounded below, $b$ for bounded, \ie\ below and above, $qc$ for quasi-coherent and $c$ for coherent to characterise the derived categories of complexes whose {\sl homology} is as the subscript. For example $D_{b,c}(X)$ is the derived category of complexes of $\cO_X$-modules with coherent and bounded homology, and $D_{qc}(X)$ is the derived category of complexes with quasi-coherent homology. Note that we work with homological notation to be compatible with the literature on Witt groups, but it is easy to switch to cohomological notation by moving bounding subscripts to superscripts and exchanging $+$ and $-$ \ie\ $D_+=D^-$.

For any scheme $X$, the usual tensor product $\TTens$ and internal Hom of
complexes together with the obvious structure morphisms coming from the
corresponding ones for sheaves turn $K(X)$ into a suspended
closed symmetric monoidal category
in the sense of \cite[Section 3]{Calmes09}.
This is completely classical and is detailed in \cite[Appendix]{Calmes09},
where a discussion on sign choices can be found. In particular,
we have a functor $T:K(X) \to K(X)$ given by $(TA)_n=T(A_{n-1})$.

\begin{theo} \label{DXclosedsymm_theo} 
Let $X$ be a scheme.
\begin{enumerate}
\item The tensor product on $K(X)$ admits a left derived functor 
$$\LTens: D(X) \times D(X) \to D(X)$$
together with unit, associativity and symmetry morphisms. 
\item It restricts to
$$D_{qc}(X) \times D_{qc}(X) \to D_{qc}(X).$$
\item When $X \in \Reg$, it furthermore restricts to
$$D_{b,c}(X) \times D_{b,c}(X) \to D_{b,c}(X).$$
\item The internal Hom on $K(X)$ has a right derived functor $\RHom$
$$D(X)^o \times D(X) \to D(X)$$ 
which is a right adjoint to the derived tensor product in the usual special sense (natural in the three variables). 
\item When $X\in \Sch$, $\RHom$ restricts to
$$D_{b,c}(X)^o \times D_{b,c}(X) \to D_{c}(X)$$
as the usual $\RHom$ (computed by replacing the second variable by a quasi-isomorphic complex of injectives in $\Qcoh(X)$). 
\item When $X\in \Reg$, this last restricted $\RHom$ arrives in $D_{b,c}(X)$.
\end{enumerate}
\end{theo}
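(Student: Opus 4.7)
The plan is to follow the Spaltenstein--Lipman approach via K-flat and K-injective resolutions for unbounded complexes, since classical bounded-resolution methods do not suffice for the statements about the full derived categories $D(X)$ and $D_{qc}(X)$. For part (1), I would first invoke Spaltenstein's theorem that every complex in $K(X)$ admits a K-flat resolution $P \to A$, and define $A \LTens B$ as (the class of) $P \TTens B$. Because tensoring with a K-flat complex preserves quasi-isomorphisms, this construction descends to a well-defined bifunctor on $D(X)$, functorial in both variables. The unit, associativity and symmetry morphisms for $\TTens$ on $K(X)$ come from the underlying sheaf-level structure maps, and they descend to $\LTens$ on $D(X)$ once suitably compatible K-flat resolutions are chosen; the coherence axioms (pentagon, hexagon, triangle) then transfer from $K(X)$ to $D(X)$ formally. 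Part (4) is analogous: define $\RHom(A, B) = \SHom(A, I)$ for a K-injective resolution $I$ of $B$ (or equivalently via a K-flat resolution of $A$), and the tensor-hom adjunction on $K(X)$ descends to the stated three-variable adjunction on $D(X)$.

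For parts (2) and (3), I would check that these derived bifunctors preserve the relevant subcategories. For (2), the key technical point is that, on a separated noetherian scheme $X$, every $A \in D_{qc}(X)$ admits a K-flat resolution $P$ whose terms are themselves quasi-coherent; this can be obtained either by a direct adaptation of Spaltenstein's construction or via the equivalence $D(\Qcoh(X)) \simeq D_{qc}(X)$ recalled in the introduction, both detailed in \cite{Lipman_pre}. Then $P \TTens B$ has quasi-coherent terms, hence quasi-coherent homology. For (3), when $X$ is regular, every coherent sheaf admits a finite locally free coherent resolution, so a bounded coherent complex is quasi-isomorphic to a bounded complex of such sheaves; tensoring two such bounded resolutions yields a bounded complex of coherent flat sheaves, preserving $D_{b,c}(X)$.

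For parts (5) and (6), concerning $\RHom$ on bounded coherent complexes, I would take $A, B \in D_{b,c}(X)$, replace $B$ by a quasi-isomorphic complex of injectives in $\Qcoh(X)$ (which remain injective as $\cO_X$-modules thanks to the noetherian hypothesis), and replace $A$ by a quasi-isomorphic bounded-above complex of coherent sheaves. A local-to-global argument, combined with the fact that on a noetherian scheme the Ext sheaves of two coherent sheaves are coherent, then shows that each homology sheaf of $\SHom(A, I)$ is coherent, which gives (5). For (6), regularity ensures that every coherent sheaf has finite injective dimension, so only finitely many homology sheaves can be nonzero, yielding the required boundedness. The principal obstacle throughout is the handling of unbounded complexes: one cannot rely on classical bounded-resolution arguments and must carefully track how the Spaltenstein--Lipman resolutions interact with quasi-coherence, coherence and boundedness; once these technical lemmas are extracted from \cite{Spaltenstein88} and \cite{Lipman_pre}, the coherence of the induced monoidal closed structure follows essentially formally.
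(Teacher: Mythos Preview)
Your proposal is correct and follows essentially the same Spaltenstein--Lipman strategy as the paper: K-flat resolutions for $\LTens$, K-injective resolutions for $\RHom$, bounded locally free resolutions in the regular case, and the Hartshorne II.3.3 argument for coherence in part~(5). The only minor deviation is in part~(6), where you invoke finite injective dimension of coherent sheaves on a regular scheme, whereas the paper instead resolves \emph{both} variables by bounded complexes of locally free sheaves and uses that $\SHom(A,B)$ is coherent when $A$ and $B$ are; both arguments are standard and equally valid.
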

\begin{proof}
See \cite[Theorem A]{Spaltenstein88} or \cite[2.5.7]{Lipman09} for the
existence of the derived tensor product. It is based on the existence of a
q-flat (also called K-flat) resolution for any complex $C$, \ie\ the existence
of a quasi-isomorphism $Q_C \to C$ where $Q_C$ is a complex such that
$(-\TTens Q_C)$ preserves quasi-isomorphisms. These resolutions can even be
constructed functorially (see \cite[2.5.5]{Lipman09}). The derived tensor
product can then be constructed by taking q-flat resolutions of both variables. The former case is used 
to define the unit morphism and the latter case to define the associativity
and symmetry morphisms directly from the ones of $K(X)$ (see \cite[Theorem
A]{Spaltenstein88} or \cite[2.5.9]{Lipman09}). See \cite[2.5.8]{Lipman09}
for the fact that $\LTens$ restricts to $D_{qc}$. In the regular case, 
by Point (3) of Proposition \ref{equivCat_prop}, we can replace any complex in
$D_{b,c}(X)$ by a bounded complex of locally free sheaves, in which case the 
derived tensor product obviously maps to $D_{b,c}(X)$.

Similarly, the derived internal Hom is constructed using q-injective (also
called K-injective) resolutions: see \cite[Section 1]{Spaltenstein88} for the
definition of a q-injective complex and \cite[Theorem A]{Spaltenstein88} or
\cite[2.4.5]{Lipman09} for the existence of $\RHom$. Adjointness is also
stated in \cite[Theorem A]{Spaltenstein88} (see also \cite[2.6.1]{Lipman09} for more details). 

We now consider $\RHom(A,B)$ with $A,B \in D_{b,c}(X)$ for $X \in \Sch$. 
By Corollary \ref{qinjBounded_coro}, the right derived functor $\RHom$ here is computed as the one in \cite[Prop. II.3.3]{Hartshorne66}. This proves point (5). In the regular case, 
we can compute $\RHom$ by a locally free resolution of the first variable and
then, up to isomorphism, also replace the second variable by a complex of
locally free sheaves. As explained above, both these complexes can be chosen
to be bounded, and since $\SHom(A,B)$ is coherent when $A$ and $B$ are
\cite[5.3.5]{EGA1}, this poves Point (6). 
\end{proof}

Now the subtle point is that $\RHom(M,N)$ is not necessarily an object in $D_{qc}(X)$ when $M$ and $N$ are. To fix this, we use the {\it quasicoherator} $\coh:\Mod(X) \to \Qcoh(X)$ as introduced in \cite[Lemma 3.2 p.187]{SGA6}, which is right adjoint to the inclusion $\Qcoh(X) \subset \Mod(X)$. On an affine space $X=\Spec(A)$, it takes a sheaf of $\cO_X$-modules to the quasi-coherent sheaf associated to the $A$-module of its global sections 
by the tilde construction. Its right derived functor is denoted by $\Rcoh$, as
considered in \cite[Remark 0.4]{AlonsoTarrio97}, 
\cite[Exercises 4.2.3]{Lipman09} or \cite[B.16]{Thomason90}. 
It is a right adjoint to the inclusion $D_{qc}(X) \subseteq D(X)$, and $A \cong \Rcoh(A)$ when $A \in D_{-,qc}(X)$ (in particular for $A \in D_{b,c}(X)$) by \cite[Exp. II, Prop. 3.5.2]{SGA6}. 
An alternative construction of
$\Rcoh$ can be obtained from \cite[Theorem 4.1]{Neeman96}.

\begin{theo}\label{ClosedMonStructures_theo}
For any scheme $X$, 
\begin{enumerate} 
\item the derived tensor product $\LTens$ together with the obvious
  morphisms turns $D(X)$ into a symmetric monoidal category, closed by the
  $\RHom$, and suspended in the sense of \cite[Section 3]{Calmes09}.
\item If $X\in \Sch$, the functor 
$$\Rcoh \circ \RHom: D_{qc}(X)^o \times D_{qc}(X) \to D_{qc}(X)$$ 
is a right adjoint (in the usual special way, see \cite[(v) p. 97]{Kelly71})
to the
restricted tensor product $\LTens$ on $D_{qc}$. This turns $D_{qc}(X)$ into a
suspended closed symmetric monoidal category.
\item If $X\in \Reg$, the usual $\RHom$ is a right adjoint 
(in the usual special way)
 to the restricted tensor product $\LTens$ on $D_{b,c}$. This turns
 $D_{b,c}(X)$ into a suspended closed symmetric monoidal category.
\end{enumerate}
\end{theo}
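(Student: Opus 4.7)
The plan is to verify each of the three parts in turn, reusing as much as possible of Theorem \ref{DXclosedsymm_theo} and the properties of the quasicoherator. Throughout, the coherence/associativity/symmetry/unit morphisms of the derived tensor product, as well as the adjunction with $\RHom$, are inherited via q-flat and q-injective resolutions from the symmetric monoidal closed structure of $K(X)$, already recorded in the Appendix of \cite{Calmes08_pre}. The suspension compatibility likewise lifts from $K(X)$ because the shift functor commutes strictly with $\TTens$ and $\HHom{-,-}$ and the chosen resolutions can be taken to be shift-preserving.

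For Point (1), I would argue that the q-flat resolutions of Spaltenstein yield well-defined pentagon, hexagon, triangle and suspension coherence diagrams in $D(X)$ since a simultaneous q-flat resolution of any finite tuple of complexes exists and is unique up to canonical isomorphism; the coherence diagrams then commute in $D(X)$ because they already commute up to chain homotopy in $K(X)$. The closedness (i.e.\ the adjunction $\Hom_{D(X)}(A\LTens B,C)\cong \Hom_{D(X)}(A,\RHom(B,C))$, trinatural in $A,B,C$) is part of Theorem \ref{DXclosedsymm_theo}(4), and the trinaturality in the sense of \cite[(v) p.~97]{Kelly71} is exactly what Lipman proves in \cite[2.6.1]{Lipman_pre}.

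For Point (2), Theorem \ref{DXclosedsymm_theo}(2) already restricts $\LTens$ to $D_{qc}(X)$, so the monoidal structure transports. The issue is only the internal Hom. Since $\Rcoh:D(X)\to D_{qc}(X)$ is right adjoint to the fully faithful inclusion $\iota:D_{qc}(X)\hookrightarrow D(X)$, for any $A,B,C\in D_{qc}(X)$ we have the chain of natural bijections
\[
\Hom_{D_{qc}}(A\LTens B,C)\cong \Hom_{D(X)}(\iota A\LTens \iota B,\iota C)\cong \Hom_{D(X)}(\iota A,\RHom(\iota B,\iota C))\cong \Hom_{D_{qc}}(A,\Rcoh\RHom(\iota B,\iota C)),
\]
the middle isomorphism being Point (1) applied to $D(X)$. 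This produces the desired adjunction, and the closed structure axioms (trinaturality, associativity, pentagon on the Hom side) follow formally from the adjunction between $\iota$ and $\Rcoh$ by transporting the already-established axioms on $D(X)$.

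For Point (3), when $X$ is regular, Theorem \ref{DXclosedsymm_theo}(3) and (6) tell us that both $\LTens$ and the usual $\RHom$ send $D_{b,c}(X)\times D_{b,c}(X)$ to $D_{b,c}(X)$. The adjunction and the coherence diagrams then restrict from Point (1) without needing $\Rcoh$ at all, since on the subcategory $D_{b,c}(X)$ of $D_{qc}(X)$ we have $\iota\cong \Rcoh\iota$ (as noted in the text after Theorem \ref{DXclosedsymm_theo}, via \cite[Exp.~II, Prop.~3.5.2]{SGA6}); in particular the $\Rcoh$ of Point (2) is superfluous, and the restricted structure is a genuine symmetric monoidal closed suspended category.

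The main obstacle is not any individual coherence check but assembling the closed-structure data in Point (2): one must be careful that the $\Rcoh\circ\RHom$ is not only right adjoint to $\LTens$ object-wise, but delivers a trinatural isomorphism making the unit/evaluation diagrams commute as required by \cite[Section 3]{Calmes08_pre}. Once one organizes the argument so that all data are transported through the adjunction $(\iota,\Rcoh)$ from $D(X)$, these verifications become formal; the only genuine input beyond Theorem \ref{DXclosedsymm_theo} is the existence and universal property of the derived quasicoherator, which we have at our disposal through \cite[Remark 0.4]{AlonsoTarrio97} and \cite[Theorem 4.1]{Neeman96}.
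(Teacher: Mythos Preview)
Your proposal is correct and follows essentially the same route as the paper: Point (1) is read off from Theorem \ref{DXclosedsymm_theo} and the suspended structure inherited from $K(X)$; Point (2) is obtained by the very same chain of adjunction isomorphisms through $(\iota,\Rcoh)$; and Point (3) by restricting via Theorem \ref{DXclosedsymm_theo}(3),(6). The paper's proof is terser and also notes Brown representability as an alternative for closedness in Point (2), but the substance is the same.
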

\begin{proof}
The closed symmetric monoidal structure on $D(X)$ easily follows from Theorem
\ref{DXclosedsymm_theo}. The fact that it is suspended follows, as explained
in \cite[Section 3]{Calmes09}, from the suspended bifunctor structure of
$\RHom$. The symmetric monoidal structure on $D_{qc}(X)$ simply follows from
the fact that $\LTens$ restricts to it. The fact that it is closed is a formal
consequence of the fact that $D(X)$ is closed and that $\Rcoh$ is a right adjoint to the (monoidal) inclusion $\iota:D_{qc}(X) \subseteq D(X)$: 
$$\Hom_{qc}(A \LTens B, C) = \Hom(\iota(A \LTens B),\iota C) \simeq \Hom(\iota A \LTens \iota B, \iota C)$$
$$\simeq \Hom(\iota A, \RHom(\iota B, \iota C))
\simeq \Hom_{qc}(A, \Rcoh \RHom( \iota A, \iota B)).$$
(The closedness - that is the existence of the right
adjoint to the derived tensor product - 
can also be deduced from Brown representability, in the spirit of the examples following \cite[Theorem 4.1]{Neeman96}.)
Point (3) follows from the same considerations, using Theorem \ref{DXclosedsymm_theo} (3) and (6).
\end{proof}

\begin{nota}\label{nothom}
To shorten the notation, let $\HHom{-,-}$ denote the functor $\RHom$, right adjoint to the tensor product on the derived category $D$ and let $\HHomq{-,-}$ denote the functor $\Rcoh \circ \RHom$, right adjoint to the tensor product on the derived category $D_{qc}$.
\end{nota}

Since the derived quasi-coherator is the identity on $D_{-,qc}$
(see above), if $\HHom{A,B} \in D_{-,qc}$ then $\HHom{A,B} \cong \HHomq{A,B}$.

\medskip

We finish this section by pointing out a comment of Neeman: exploiting the fact that for $X \in \Sch$, there are enough flat objects in $D_{qc}$ and his representability result more extensively gives an alternative approach for constructing a closed symmetric monoidal structure on $D_{qc}(X)$ directly without passing through $D(X)$.
 
\section{Witt groups} \label{WittGroups_sec}

From now on, we assume that all schemes
are defined over $\bbZ[1/2]$.

\medskip
To define a Witt group, we need a strong duality on a triangulated
category. Using the previous framework of triangulated closed symmetric
monoidal categories, we recall how $\HHom{-,K}$ and $\HHomq{-,K}$ define dualities. The purpose of this section is to compare the restrictions of these dualities to the subcategory $D_{b,c}$ and to discuss when these dualities are strong on it. For any object $K$, let $\dual_K$ (resp. $\dualq_K$) denote the contravariant exact functor $\HHom{-,K}$ (resp. $\HHomq{-,K}$).

Following \cite[Section 3.2]{Calmes09}, applied to the closed symmetric monoidal
 structure on $D(X)$ with $X$ an arbitrary scheme, we may define the 
{\it bidual morphism}
$$\bid_K: \Id \to \dual_K \dual_K$$
as a morphism of triangulated endofunctors of $D(X)$. From
\cite[Cor. 3.2]{Calmes09}, we obtain that $(D(X),\dual_K,\bid_K)$ is a
triangulated category with weak duality (in the sense of
\cite[Definition 2.1.1]{Calmes09}, so $\bid_K$ is not necessarily
an isomorphism). Similarly, when $X \in \Sch$, we obtain a triangulated category with weak duality $(D_{qc}(X), \dualq_K, \bidq_K)$.

\begin{defi} \label{dualizing_defi}
Let $K$ be an object of $D_{qc}(X)$. It is a {\em dualizing complex} (or it is {\em dualizing}) if
\begin{itemize}
\item[-] the functor $\HHom{-,K}$ preserves $D_{b,c}(X)$ and 
\item[-] the bidual morphism $\bid_K$ is an isomorphism on $D_{b,c}(X)$. 
\end{itemize}
If furthermore it has finite injective dimension, \ie\ it is quasi-isomorphic
to a finite complex of injectives, we say it is an {\em injectively bounded dualizing complex}.
\end{defi}
In the terminology of \cite[Definition 2.1.1]{Calmes09}, the second
condition says that $\dual_K$ is a strong duality on the subcategory $D_{b,c}(X)$.

Note that for any $X \in \Sch$, a dualizing complex $K$ is automatically in $D_{b,c}(X)$ since the natural morphism $K \to \HHom{\cO_X,K}$ coming from the monoidal structure is an isomorphism and $\cO_X$ is coherent. In particular, our definition is exactly the ``modern" \cite[Definition 3.1]{Neeman10}, by Lemma 3.5 of \loccit\ 
Also note that our injectively bounded dualizing complexes are the ``old" dualizing complexes of \cite[V. §2]{Hartshorne66}
\begin{prop} \label{compareDualities_prop}
Let $X \in \Sch$ and $K \in D_{qc}(X)$ be a dualizing complex. Then the functors $\dual_K$ and $\dualq_K$ coincide and the bidual morphisms $\bid_K$ and $\bid'_K$ are equal on the subcategory $D_{b,c}(X)$. 
\end{prop}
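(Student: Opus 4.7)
The plan is to prove both assertions by exploiting a single observation: the derived quasi-coherator $\Rcoh$ acts as the identity on $D_{-,qc}(X)$, and the dualizing hypothesis forces every object produced by $\dual_K$ on $D_{b,c}(X)$ to land in $D_{b,c}(X) \subseteq D_{-,qc}(X)$.

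For the equality of functors, let $A \in D_{b,c}(X)$. By the dualizing hypothesis, $\dual_K(A) = \HHom{A,K}$ lies in $D_{b,c}(X)$, hence in $D_{-,qc}(X)$. The remark preceding the proposition then yields a canonical isomorphism $\dualq_K(A) = \Rcoh\,\HHom{A,K} \cong \HHom{A,K} = \dual_K(A)$, given by the counit $\eps\colon \iota\,\Rcoh \to Id$ of the adjunction $(\iota,\Rcoh)$ between $D_{qc}(X)$ and $D(X)$. This counit is the identity on $D_{-,qc}(X)$, so the two functors coincide on $D_{b,c}(X)$ through this canonical identification.

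For the equality of the bidual morphisms, I would unwind the construction of $\bid_K$ and $\bid'_K$ from \cite{Calmes08_pre}. In any symmetric monoidal closed category, the bidual morphism is the morphism corresponding under tensor-Hom adjunction to the symmetrized evaluation map. Here the decisive point is that the inclusion $\iota\colon D_{qc}(X) \hookrightarrow D(X)$ is monoidal (since $\LTens$ restricts to $D_{qc}(X)$), and $\Rcoh$ is its right adjoint; so the adjunction defining $\bid'_K$ on $D_{qc}(X)$ is obtained from the adjunction defining $\bid_K$ on $D(X)$ by composing with the unit and counit of $(\iota,\Rcoh)$. Concretely, for $A \in D_{b,c}(X)$, the chain of adjunction isomorphisms
\begin{equation*}
\Hom_{D_{qc}}(A, \dualq_K \dualq_K A) \simeq \Hom_{D}(\iota A \LTens \iota \dualq_K A, \iota K) \simeq \Hom_{D}(A, \dual_K \dualq_K A)
\end{equation*}
sends $\bid'_K$ to a morphism that, modulo the canonical identification $\dualq_K A \cong \dual_K A$ established above, is visibly the evaluation pairing defining $\bid_K$. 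Because all intermediate objects belong to $D_{-,qc}(X)$, the units and counits of $(\iota,\Rcoh)$ occurring in this comparison are all isomorphisms (in fact equalities under the identification), so no information is lost.

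The main obstacle will be bookkeeping: one has to make sure the canonical identification $\dualq_K \cong \dual_K$ on $D_{b,c}(X)$ transports $\bid'_K$ to $\bid_K$ on the nose, which requires that the compatibility used is precisely the naturality of the tensor-Hom adjunction with respect to the adjunction $(\iota,\Rcoh)$. This is a general fact about monoidal adjunctions between closed monoidal categories, and I would invoke the formalism of \cite{Calmes08_pre} rather than re-derive it, applying it to the monoidal inclusion $\iota$ to conclude that $\bid_K = \bid'_K$ after restriction to $D_{b,c}(X)$.
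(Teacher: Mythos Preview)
Your proposal is correct and follows essentially the same route as the paper. For the first assertion you use exactly the paper's argument (that $\Rcoh$ is the identity on $D_{-,qc}$ and $\dual_K$ lands in $D_{b,c}$); for the second, both you and the paper reduce to the fact that the inclusion $\iota\colon D_{qc}(X)\hookrightarrow D(X)$ is monoidal and then invoke the compatibility of bidual morphisms with monoidal functors from \cite{Calmes08_pre}---the paper pinpoints this as the diagram in the proof of \cite[Theorem~4.1.2]{Calmes08_pre} with $f^*$ replaced by $\iota$, which is precisely the general fact you describe.
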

\begin{proof}
Since $\HHom{A,K}\in D_{b,c}(X)$ for any $A\in D_{b,c}(X)$, we have
$\HHomq{A,K} \cong \HHom{A,K}$ by the remark after Notation \ref{nothom}
which proves that $\dualq_K \cong \dual_K$. The
bidual morphisms are then equal by the large commutative diagram considered
in the proof of \cite[Theorem 4.1.2]{Calmes09}, in which the $f^*$ should be replaced by the inclusion $D_{qc}(X) \subset D(X)$, which is monoidal by definition of the tensor product on $D_{qc}(X)$. 
\end{proof}

\begin{exam}\label{examplesdual}
\begin{enumerate}
\item A dualizing complex tensored by a shifted line bundle is still a
  dualizing complex. In fact, this is the only freedom: by \cite[Lemma
  3.9]{Neeman10} (see also \cite[Theorem V.3.1]{Hartshorne66} for 
the injectively bounded case),
a dualizing complex is unique up to tensoring by shifted line bundles (the shift can be different on different connected component of $X$).
\item On a Gorenstein scheme $X$ (\eg\ regular), $\cO_X$ itself is dualizing, so by the previous point, the only dualizing complexes are the shifted line bundles. 
\end{enumerate}
\end{exam}

Note that on a regular scheme, the category $D_{b,c}(X)$ itself is closed symmetric monoidal. It follows that dualizing complexes are dualizing objects in the sense of \cite[Definition 3.2.2]{Calmes09} in the category $D_{b,c}(X)$, for $X \in \Sch$.

\begin{theo}
Let $X \in \Sch$ and $K$ be dualizing. Then $(D_{b,c}(X),\dual_K,\bid_K)$ is a
triangulated category with strong duality in the sense of
\cite[Def. 2.1.1]{Calmes09}. Let it be denoted by $\cC_K$ and its Witt groups 
\cite[Definition 2.1.5]{Calmes09} by $\W^i(X,K)$, $i\in \bbZ$.
\end{theo}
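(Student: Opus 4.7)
The plan is to deduce this theorem from the structure already built on $D(X)$ together with the two conditions defining a dualizing complex. Concretely, the previous discussion (following \cite[Section 3.2]{Calmes08_pre} and \cite[Cor. 3.2]{Calmes08_pre}) has already established that $(D(X),\dual_K,\bid_K)$ is a triangulated category with (not necessarily strong) duality: the functor $\dual_K=\HHom{-,K}$ is a contravariant triangulated endofunctor, $\bid_K\colon Id\to\dual_K\dual_K$ is a natural transformation of triangulated functors, and the compatibility identity $\dual_K(\bid_A)\circ\bid_{\dual_K A}=Id_{\dual_K A}$ holds for every $A\in D(X)$. All of this data is natural, so it restricts to any triangulated subcategory stable under $\dual_K$.

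First I would check that $D_{b,c}(X)$ is indeed stable under $\dual_K$. This is precisely the first bullet in Definition \ref{dualizing_defi}: since $K$ is dualizing, $\HHom{-,K}$ sends $D_{b,c}(X)$ into itself. In particular $\dual_K^2$ also preserves $D_{b,c}(X)$, so the natural transformation $\bid_K$ restricts to a natural transformation of triangulated endofunctors of $D_{b,c}(X)$, and the identity $\dual_K(\bid_A)\circ\bid_{\dual_K A}=Id_{\dual_K A}$ is automatically inherited from its validity in $D(X)$.

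Next, to upgrade the restricted duality to a strong one in the sense of \cite[Def. 2.1.1]{Calmes08_pre}, it suffices to verify that $\bid_K$ is a natural isomorphism on $D_{b,c}(X)$. But this is exactly the second bullet in Definition \ref{dualizing_defi}. Finally, $D_{b,c}(X)$ is a thick (in particular triangulated) subcategory of $D(X)$, so the triangulated structure is inherited, and the compatibility of $\dual_K$ with $T$ and with distinguished triangles on $D_{b,c}(X)$ follows from the corresponding compatibilities on $D(X)$.

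Assembling these points, $(D_{b,c}(X),\dual_K,\bid_K)$ satisfies all the axioms of a triangulated category with strong duality, and its Witt groups $\W^i(X,K)$ are then defined as in \cite[Definition 2.1.5]{Calmes08_pre}. No serious obstacle arises: the entire argument is a matter of restricting the already-known structure on $D(X)$ to the subcategory $D_{b,c}(X)$, which is legitimate precisely because the two defining conditions of a dualizing complex guarantee stability of $D_{b,c}(X)$ under $\dual_K$ and the bijectivity of the restricted bidual.
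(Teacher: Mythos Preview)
Your argument is correct. You restrict the triangulated-category-with-duality structure directly from $(D(X),\dual_K,\bid_K)$ to $D_{b,c}(X)$, using only the two defining conditions of a dualizing complex.

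The paper's own proof takes a slightly different route: it restricts instead from $(D_{qc}(X),\dualq_K,\bidq_K)$, invoking Proposition~\ref{compareDualities_prop} to identify $\dual_K$ with $\dualq_K$ and $\bid_K$ with $\bidq_K$ on $D_{b,c}(X)$, and then inherits the required commutative diagrams from $D_{qc}(X)$ rather than from $D(X)$. Your approach is more economical here, since it avoids Proposition~\ref{compareDualities_prop} entirely. The paper's detour through $D_{qc}$ is motivated by what comes later: the push-forward uses $f^!$, which only exists on $D_{qc}$, so the comparison between the $D$- and $D_{qc}$-structures has to be made at some point. Indeed, the paper remarks immediately after the proof that one may equally well regard $(D_{b,c}(X),\dual_K,\bid_K)$ as restricted from either $D(X)$ or $D_{qc}(X)$, the two structures coinciding on $D_{b,c}(X)$; your proof realizes the first of these two viewpoints.
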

\begin{proof}
The functor $\dual_K=\dualq_K$ is a contravariant endofunctor of $D_{b,c}(X)$
and $\bid_K=\bidq_K$ is an isomorphism on this category by definition of
dualizing complexes by Proposition \ref{compareDualities_prop}. The necessary commutative diagrams that $\dualq_K$ and $\bidq_K$ must satisfy simply follow from the fact that they are already satisfied in $D_{qc}(X)$ since $(D_{qc}(X),\dualq_K,\bidq_K)$ is a triangulated category with weak duality. 
\end{proof}

We may thus think of the triangulated category with duality $(D_{b,c}(X),\dual_K,\bid_K)$ as being restricted from $(D(X),\dual_K,\bid_K)$ or from $(D_{qc}(X),\dualq_K,\bidq_K)$, both structures coinciding on $D_{b,c}(X)$.

\begin{rema}
In \cite{Balmer00}, all dualities considered are strict, \ie\ they strictly commute with the suspension, but this assumption is only there for simplicity.
Instead, in \cite[Def. 2.1.1]{Calmes09}, we only assume commutativity up to a natural isomorphism, and all theorems in \cite{Balmer00} are still true
in this more general situation. 
\end{rema}

\begin{rema} Recall (see e.g. \cite[Def. 10.5.1]{Weibel94})
that for a left exact functor $f$ between exact categories, the right derived functor really
is a couple $(\Rf,s)$ with $s:qf \to (\Rf) q$ and $q$ the morphism from the
homotopy category to the derived category. It is only the couple
$(\Rf,s)$ which is unique up to unique isomorphism and therefore
deserves being called {\it the} right derived functor, despite the standard abbreviated notation $\Rf$.
Consequently, the various derived functors, for example $\RHom(-,K)$ (used to define the duality) and $\Rf_*$ (used below to define the push-forward) together with all the morphisms of functors defining the symmetric monoidal structure can be considered as abstract exact functors and morphisms of exact functors. With them, it is possible to define coherent Witt groups and push-forwards by the methods discussed in this article, since these methods only involve the abstract triangulated categories and functors, \ie\ the framework of \cite{Calmes09}. But as such, there is no uniqueness of all these constructions. It is only if we keep as extra data all the structural morphisms of the derived functors (the $s$ part of the couples), and thus the relationship between the closed symmetric monoidal structure on $K(X)$ and the one on $D(X)$, that the whole derived construction becomes unique up to unique isomorphism, thus in particular the induced dualities, pull-backs and push-forwards. 
\end{rema}

\section{The functors $\Lf^*$, $\Rf_*$ and $f^!$} \label{functors_sec}

We now introduce the functors $\Lf^*$, $\Rf_*$ and $f^!$ associated to a morphism of schemes $f$ and explain how they behave with respect to the monoidal structures. The first two
functors are derived functors, whereas the third one is right adjoint to
$\Rf_*$ at the level of derived categories, but is not the derived functor of some underlying
functor on the category of $\cO_X$-modules. The construction of $f^!$ is the
heart of Grothendieck duality theory, for which we refer the reader to
\cite{Hartshorne66}, \cite{Verdier69}, \cite{Neeman96}, \cite{Conrad00} 
or \cite{Lipman09}.

\begin{prop}
Let $f:X \to Y$ be a morphism of schemes. 
\begin{enumerate}
\item The functor $f^*$ admits a left derived functor $\Lf^*: D(Y) \to D(X)$ which restricts to $D_{qc}(Y) \to D_{qc}(X)$.
\item If $f$ is of finite tor-dimension (see \eg\ \cite[Examples (2.7.6)]{Lipman09}) or if $X,Y \in \Reg$, then $\Lf^*$ resticts to $D_{b,c}(Y) \to D_{b,c}(X)$.
\end{enumerate}
\end{prop}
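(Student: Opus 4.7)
The plan is to construct $\Lf^*$ on the unbounded derived category via q-flat resolutions, then verify each restriction by examining what $f^*$ does to such resolutions.

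For the existence of $\Lf^*: D(Y) \to D(X)$, I would invoke the functorial q-flat resolutions $Q_C \to C$ provided by \cite[2.5.5]{Lipman_pre}, which were already used in the proof of Theorem \ref{DXclosedsymm_theo}. The inverse image functor $f^*$ at the sheaf level preserves q-flatness (since it is compatible with tensor products), so it takes quasi-isomorphisms between q-flat complexes to quasi-isomorphisms. Setting $\Lf^*(C) := f^*(Q_C)$ therefore descends to a well-defined triangulated functor $D(Y) \to D(X)$; this is Spaltenstein's construction (see \cite[Section 5]{Spaltenstein88} or \cite[2.6.5]{Lipman_pre}).

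For the restriction to $D_{qc}$, the cleanest route is to observe that $f^*$ sends quasi-coherent $\cO_Y$-modules to quasi-coherent $\cO_X$-modules, and that any $C \in D_{qc}(Y)$ admits a q-flat resolution by a complex of quasi-coherent sheaves. Then $\Lf^*C$ is represented by a complex of quasi-coherent sheaves and therefore has quasi-coherent cohomology. Alternatively, one may invoke a hyperhomology spectral sequence together with the quasi-coherence of $f^*$ on individual sheaves.

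For point (2), I would treat the two cases separately. If $f$ has finite tor-dimension $d$, then for any quasi-coherent sheaf $\cF$ on $Y$ the complex $\Lf^*\cF$ has cohomology concentrated in a bounded range. Applied to $C \in D_{b,c}(Y)$, the standard hyperhomology spectral sequence $H^p(\Lf^* H^q C) \Rightarrow H^{p+q}(\Lf^*C)$ then shows $\Lf^*C \in D_b(X)$. Coherence of each cohomology sheaf reduces locally, for rings $B \to A$ of finite tor-dimension and a coherent $B$-module $M$, to the statement that $\mathrm{Tor}_i^B(A,M)$ is finitely generated over $A$, which follows from resolving $M$ by finitely generated free $B$-modules and observing that the $\mathrm{Tor}$ is a subquotient of a finitely generated $A$-module. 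If instead $X,Y \in \Reg$, then by Proposition \ref{equivCat_prop}(3) any object of $D_{b,c}(Y)$ is quasi-isomorphic to a bounded complex of locally free $\cO_Y$-modules of finite rank; applying $f^*$ yields a bounded complex of locally free $\cO_X$-modules, which visibly lies in $D_{b,c}(X)$.

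The step I expect to be most delicate is coherence preservation in the finite tor-dimension case, since in general $C \in D_{b,c}(Y)$ need not admit a globally bounded flat resolution and one cannot simply reduce to locally free complexes as in the regular case. The argument succeeds only because the finite tor-dimension hypothesis makes the tor-amplitude computation work uniformly in all degrees, and because coherence of the resulting cohomology sheaves may be verified locally on $X$.
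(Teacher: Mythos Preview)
Your proposal is correct and follows essentially the same approach as the paper: q-flat resolutions (via Spaltenstein/Lipman) for existence, a reference to the standard fact that $\Lf^*$ preserves $D_{qc}$, and for Point~(2) boundedness from finite tor-dimension together with coherence preservation, while the regular case goes through bounded complexes of locally free sheaves via Proposition~\ref{equivCat_prop}(3). The paper simply delegates the $D_{qc}$ restriction and the coherence statement to \cite[3.9.1]{Lipman_pre} and \cite[Proposition II.4.4]{Hartshorne66} (together with Proposition~\ref{qflatBounded_prop}) rather than sketching them, but the content is the same.

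One small caution: your ``cleanest route'' for the $D_{qc}$ restriction asserts that an arbitrary unbounded object of $D_{qc}(Y)$ admits a q-flat resolution by quasi-coherent sheaves; this is true under the mild hypotheses in play but is not entirely immediate from the resolutions of \cite[2.5.5]{Lipman_pre}, which are built in $\Mod(\cO_Y)$. Citing \cite[3.9.1]{Lipman_pre} directly, as the paper does, sidesteps this.
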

\begin{proof}
For existence, see \cite[Theorem A (iii) or Prop. 6.7]{Spaltenstein88} or
\cite[Example 2.7.3]{Lipman09}. For the fact that it restricts to $D_{qc}$,
see \cite[3.9.1]{Lipman09}. It restricts to $D_{b,c}$ in the finite
tor-dimension case because $\Lf^*$ is then bounded and it respects the
coherence of the cohomology by \cite[Proposition II.4.4]{Hartshorne66}, bearing in mind 
Proposition \ref{qflatBounded_prop}. The case $X,Y \in \Reg$ follows from
Point (3) of Proposition \ref{equivCat_prop}
and Proposition \ref{qflatBounded_prop}.
\end{proof}

\begin{prop}
The usual isomorphism $f^*(A \otimes B) \to f^* A \otimes f^* B$ induces an isomorphism of triangulated bifunctors (in the sense of \cite[Def. 1.4.14]{Calmes09})
$$\fp: \Lf^*(- \TTens -) \to \Lf^* (-) \TTens \Lf^* (-)$$
which turns $\Lf^*$ into a suspended symmetric monoidal functor in the sense of \cite[Section 4]{Calmes09}.
\end{prop}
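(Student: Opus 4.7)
The plan is to descend to the underived level, where the natural isomorphism $f^*(A \TTens B) \to f^*A \TTens f^*B$ is classical and automatically compatible with the monoidal structure (unit, associativity, symmetry), and then transport everything through q-flat resolutions as in Theorem \ref{DXclosedsymm_theo}. The key underlying facts, all proved in Spaltenstein and Lipman, are: (i) every complex admits a functorial q-flat resolution $Q_A \to A$; (ii) $f^*$ sends q-flat complexes to q-flat complexes (because $f^*$ of a flat $\cO_Y$-module is flat, and $f^*$ commutes with $\TTens$ at the sheaf level); and (iii) the tensor product of two q-flat complexes is q-flat. Thus for $A, B \in D(Y)$, pick q-flat resolutions $Q_A \to A$ and $Q_B \to B$; then $Q_A \TTens Q_B$ is a q-flat resolution of $A \TTens B$, so $\Lf^*(A \LTens B)$ is computed by $f^*(Q_A \TTens Q_B)$. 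On the other hand, $f^*Q_A$ and $f^*Q_B$ are q-flat, so $\Lf^*A \LTens \Lf^*B$ is computed by $f^*Q_A \TTens f^*Q_B$. The underived isomorphism $f^*(Q_A \TTens Q_B) \xrightarrow{\sim} f^*Q_A \TTens f^*Q_B$ therefore descends to the desired isomorphism
\[
\fp_{A,B}\colon \Lf^*(A \LTens B) \xrightarrow{\sim} \Lf^*A \LTens \Lf^*B
\]
in $D(X)$. Functoriality of $Q_{(-)}$ makes $\fp$ a natural transformation, and independence of the choice of resolutions follows from the usual argument that any two q-flat resolutions are linked by a quasi-isomorphism.

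Next I would verify that $\fp$ is a morphism of triangulated bifunctors in the sense of \cite[Def.~1.4.14]{Calmes08_pre}. Triangulatedness in each variable reduces, after replacing a given distinguished triangle by a triangle of q-flat complexes (possible by standard resolution arguments), to the fact that the underived $f^*(-\TTens-)$ and $f^*(-)\TTens f^*(-)$ both send a degreewise split exact sequence in $K(Y)$ to one in $K(X)$, and that the underived isomorphism on $K$ is compatible with mapping cones. Compatibility with the suspension $T$ is likewise reduced to the trivial sheaf-level equality $f^*(T A \TTens B) = T(f^*A \TTens f^*B)$, up to the chosen signs laid out in the appendix of \cite{Calmes08_pre}.

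Finally, one must check the coherence axioms making $(\Lf^*,\fp)$ a suspended symmetric monoidal functor, namely compatibility of $\fp$ with the unit (using $\Lf^*\cO_Y \cong \cO_X$), associativity and symmetry constraints, and the sign conventions for suspension from \cite[Section 4]{Calmes08_pre}. Each of these diagrams commutes in $D(X)$ if and only if it commutes after replacing all input complexes by q-flat resolutions, in which case the derived functors agree with the underived ones, and the diagram in $K(X)$ is the image under $f^*$ of the (known) underived coherence diagram in $K(Y)$. The main obstacle is purely bookkeeping: ensuring that the same choice of functorial q-flat resolution is used consistently throughout all the diagrams, and that the sign conventions for the suspended structure track correctly through $\Lf^*$; once this is set up, every coherence identity reduces to the corresponding identity at the sheaf level, which is standard.
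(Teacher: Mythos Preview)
Your proposal is correct and follows essentially the same route as the paper: define $\fp$ via the underived isomorphism after replacing both variables by q-flat resolutions, then reduce the coherence diagrams (unit, associativity, symmetry, suspension) to the corresponding ones in $K(X)$ using that $f^*$ preserves q-flat complexes and that tensor products of q-flats are q-flat. The paper's proof is terser (it simply cites Spaltenstein and Proposition~\ref{qflatinj_prop}), but the content is the same.
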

\begin{proof}
See \cite[Prop. 6.8]{Spaltenstein88}. The morphism $\fp$ is defined as the corresponding one on $K(X)$ after having replaced both variables by q-flat resolutions. It is already an isomorphism on $K(X)$. The commutative diagrams required (compatibility with the associativity, unit and symmetry of the monoidal structures) then easily follow from the corresponding ones on $K(X)$, using Proposition \ref{qflatinj_prop}, Points (1) and (3). 
\end{proof}

By \cite[Proposition 4.1.1]{Calmes09} applied to the symmetric monoidal structure and $\Lf^*$ on $D(X)$, there is a natural morphism 
$$\fh: \Lf^* \HHom{-,-} \to \HHom{\Lf^*(-),\Lf^*(-)}.$$
We also obtain a morphism
$$\fhq:\Lf^* \HHomq{-,-} \to \HHomq{\Lf^*(-),\Lf^*(-)}.$$
using $D_{qc}(X)$ instead of $D(X)$. 

\begin{prop} \label{compareBetas_prop}
Let $X,Y \in \Sch$ and $A,B \in D_{qc}(Y)$. Assuming $\HHom{A,B} \in D_{qc}(Y)$ and $\HHom{\Lf^*A,\Lf^*B} \in D_{qc}(X)$, the morphisms $\fh_{A,B}$ and $\fhq_{A,B}$ coincide.
In particular, when $K$ and $\Lf^* K$ are dualizing, $\fh_K$ and $\fhq_K$ coincide. 
\end{prop}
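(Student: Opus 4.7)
The plan is to adapt, nearly verbatim, the argument used in the proof of Proposition \ref{compareDualities_prop}. Since the inclusion $\iota_Z : D_{qc}(Z) \hookrightarrow D(Z)$ is itself a strict symmetric monoidal functor admitting the right adjoint $\Rcoh$, \cite[Prop.~4.1.1]{Calmes08_pre} produces a comparison morphism
\[ \iota_Z\,\HHomq{A,B} \longrightarrow \HHom{\iota_Z A, \iota_Z B}. \]
Unwinding the construction identifies this morphism with the counit of the adjunction $(\iota_Z,\Rcoh)$ evaluated at $\HHom{\iota_Z A,\iota_Z B}$; it is therefore an isomorphism exactly when the target lies in $D_{qc}(Z)$. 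Our hypotheses provide this for $Z=Y$ at the pair $(A,B)$ and for $Z=X$ at the pair $(\Lf^* A,\Lf^* B)$.

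The main step is then to invoke the large commutative diagram from the proof of \cite[Theorem~4.1.2]{Calmes08_pre}, applied here to the equality of symmetric monoidal composite functors
\[ \Lf^* \circ \iota_Y \;=\; \iota_X \circ \Lf^* : D_{qc}(Y) \longrightarrow D(X), \]
which holds because $\Lf^*$ restricts to $D_{qc}$. That diagram assembles the four relevant comparison morphisms---namely $\fh_{A,B}$, the image $\iota_X(\fhq_{A,B})$, and the two inclusion comparisons coming from $\iota_Y$ and $\iota_X$---into a commutative square. After inverting the two inclusion comparisons (legitimate by the previous paragraph), the square reads as $\iota_X(\fhq_{A,B}) = \fh_{A,B}$ under the natural identifications, which is the desired equality.

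For the final assertion, when $K$ and $\Lf^* K$ are both dualizing, then for every $A \in D_{b,c}(Y)$ one has $\HHom{A,K} \in D_{b,c}(Y) \subset D_{qc}(Y)$ and $\HHom{\Lf^* A,\Lf^* K} \in D_{b,c}(X) \subset D_{qc}(X)$, so the general hypothesis is automatically satisfied with $B=K$ for all such $A$, whence $\fh_K$ and $\fhq_K$ agree on $D_{b,c}(Y)$. The only real obstacle is the bookkeeping required to transfer \cite[Theorem~4.1.2]{Calmes08_pre}---originally formulated for the adjoint pair $(f^*,\Rf_*)$---to the adjoint pair $(\iota,\Rcoh)$, but as already noted in the proof of Proposition \ref{compareDualities_prop}, this substitution is purely formal since the cited diagram depends only on the abstract structure of a symmetric monoidal functor with a right adjoint.
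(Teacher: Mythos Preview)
Your proposal is correct and follows essentially the same route as the paper. The paper's proof draws the commutative square
\[
\xymatrix{
\Lf^* \HHomq{A,B} \ar[r]^-{\fhq} \ar[d] & \HHomq{\Lf^*A,\Lf^*B} \ar[d] \\
\Lf^* \HHom{A,B} \ar[r]^-{\fh} & \HHom{\Lf^*A,\Lf^*B}
}
\]
and observes that the vertical maps become identities under the hypotheses; your argument makes explicit that these vertical maps are exactly the $\beta$-morphisms for the monoidal inclusion $\iota$, and that the commutativity of the square is an instance of the compatibility of $\beta$ under the composition $\Lf^*\circ\iota_Y=\iota_X\circ\Lf^*$, which is precisely the content the paper alludes to when it says the diagram ``is formally obtained from the definitions of $\fh$ and $\fhq$ out of the closed monoidal structures.''
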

\begin{proof}
This follows from the commutative diagram
$$\xymatrix{
\Lf^* \HHomq{A,B} \ar[r]^-{\fhq} \ar[d] & \HHomq{\Lf^*A,\Lf^*B} \ar[d] \\
\Lf^* \HHom{A,B} \ar[r]^-{\fh} & \HHom{\Lf^*A,\Lf^*B}
}$$
in which the vertical maps become identities under the assumptions. This diagram is formally obtained from the definitions of $\fh$ and $\fhq$ out of the closed monoidal structures.
\end{proof}

\begin{prop} \label{betaIsoFiniteTorDim_prop}
When $X,Y \in \Sch$ and $f:X \to Y$ is of finite tor-dimension or when $X,Y \in \Reg$ and for any $f: X \to Y$, the natural morphism $\fh$ is an isomorphism on objects in $D_{b,c}$. 
\end{prop}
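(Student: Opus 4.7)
The strategy is to reduce to the case where $A$ is a perfect complex (quasi-isomorphic to a bounded complex of finite rank locally free sheaves), for which $\fh_{A,B}$ is essentially formal: a perfect $A$ satisfies $\HHom{A,B} \cong A^\vee \TTens B$ with $A^\vee := \HHom{A,\cO_Y}$ again perfect, and $\Lf^*$ commutes both with $\TTens$ (via $\fp$, already constructed) and with the dualization of perfect complexes.

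When $X, Y \in \Reg$, Proposition \ref{equivCat_prop}(3) gives that every $A \in D_{b,c}(Y)$ is quasi-isomorphic to a bounded complex $E^\bullet$ of finite rank locally free sheaves. A short triangulated d\'evissage on the number of nonzero terms of $E^\bullet$, using that $\Lf^*\HHom{-,B}$ and $\HHom{\Lf^*(-),\Lf^* B}$ are both contravariant triangulated functors in $A$, reduces the claim to the case where $A = E$ is a single finite rank locally free sheaf. There one obtains the isomorphism
\begin{equation*}
\Lf^*\HHom{E, B} \cong \Lf^*(E^\vee \TTens B) \cong \Lf^* E^\vee \TTens \Lf^* B \cong (f^* E)^\vee \TTens \Lf^* B \cong \HHom{\Lf^* E, \Lf^* B}
\end{equation*}
by applying $\fp$ and using that $f^*$ commutes with the dual of a finite rank locally free sheaf, and one checks that this composite agrees with $\fh_{E,B}$ by unwinding the definition of $\fh$ from \cite[Prop.~4.1.1]{Calmes08_pre}.

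When $f$ has finite tor-dimension without regularity, Proposition \ref{equivCat_prop}(3) is unavailable. The plan is then to localize: the claim that $\fh_{A,B}$ is an isomorphism in $D(X)$ is local on $X$ and $Y$, so I would reduce to the affine setting $Y = \Spec R$, $X = \Spec S$ with $R \to S$ of finite tor-dimension. Since $R$ is noetherian and $A \in D_{b,c}(R)$ is pseudo-coherent, there is a quasi-isomorphism $P^\bullet \to A$ with $P^\bullet$ a complex of finite rank free $R$-modules concentrated in degrees $\leq 0$. The map $\fh_{A,B}$ is visibly an isomorphism when $A$ is a single finite rank free module, and a contravariant way-out argument in the spirit of \cite[Prop.~I.7.1]{Hartshorne66} extends this to all $A \in D_{b,c}(Y)$; the one-sided amplitude bounds required for $F(A) = \Lf^* \HHom{A, B}$ and $G(A) = \HHom{\Lf^* A, \Lf^* B}$ follow from $B \in D_{b,c}(R)$ together with the finite tor-amplitude of $\Lf^*$.

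The main technical obstacle will be this last step: setting up the way-out lemma rigorously, bearing in mind that $\HHom{A,B}$ is generally \emph{not} in $D_{b,c}(Y)$ (it can be unbounded above, as in the $k[\epsilon]/\epsilon^2$ example in the introduction), so the amplitude analysis has to be performed in $D(X)$ rather than in $D_{b,c}(X)$. The regular case, by contrast, is essentially formal once Proposition \ref{equivCat_prop}(3) is invoked.
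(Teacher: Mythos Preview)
Your argument is correct and is essentially the content of the results the paper cites: the paper's own proof simply invokes \cite[Proposition 4.6.6]{Lipman_pre} for the finite tor-dimension case and, after the same reduction via Proposition \ref{equivCat_prop}(3), \cite[Proposition 4.6.7]{Lipman_pre} for the regular case, whereas you have unpacked the d\'evissage and way-out reasoning that underlie those propositions. The only thing to add is the identification of the paper's $\fh$ with Lipman's map $\rho$ (compare \cite[(3.5.4.5)]{Lipman_pre} with \cite[Proposition 4.1.1]{Calmes08_pre}), which is immediate from the definitions.
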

\begin{proof}
This follows from \cite[Proposition 4.6.6]{Lipman09} for $f$ of finite tor-dimension, the first variable coherent and the second in $D_-$, so in particular for both in $D_{b,c}$. Note that the $\rho$ of \loccit\ coincides with our $\fh$ by definition (compare \cite[(3.5.4.5)]{Lipman09} and \cite[Proposition 4.1.1]{Calmes09}). In the regular case, by Point (3) of Proposition \ref{equivCat_prop}, we can assume our objects are bounded complexes of locally free sheaves, in which case the result follows from \cite[Proposition 4.6.7]{Lipman09}.
\end{proof}

\begin{prop}
Let $f:X \to Y$ be a morphism of schemes. 
\begin{enumerate}
\item The functor $f_*$ admits a right derived functor $\Rf_*:D(X) \to D(Y)$. 
\item The functor $\Rf_*$ restricts to $D_{qc}(X) \to D_{qc}(Y)$ when $f$ is quasi-compact and 
separated, in particular if $X$ and $Y$ are in $\Sch$, see
\cite[Cor. 6.1.10]{EGA1Springer}. 
\item The functor $\Rf_*$ restricts to $D_{b,c}(X) \to D_{b,c}(Y)$ when $f$ is proper and $Y$ is quasi-compact. 
\end{enumerate}
\end{prop}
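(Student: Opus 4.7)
The plan is to assemble three standard results from the Grothendieck--Spaltenstein--Lipman machinery, one for each clause, building on the q-injective resolutions already used to construct $\RHom$ in Theorem \ref{DXclosedsymm_theo}.

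For (1), I would invoke Spaltenstein's theorem that every complex $C$ in $K(X)$ admits a q-injective (K-injective) resolution $C \to I_C$, functorially in $C$. Since $f_*$ sends q-injective-preserving quasi-isomorphisms of q-injective complexes to quasi-isomorphisms, the assignment $C \mapsto f_*(I_C)$ descends to a right derived functor $\Rf_*:D(X) \to D(Y)$; this is \cite[Theorem A]{Spaltenstein88} (see also \cite[Section 2.3]{Lipman_pre}). No restriction on $f$ is needed here because q-injective resolutions exist in full generality.

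For (2), the hypothesis that $f$ is quasi-compact and separated is automatic when $X,Y \in \Sch$ since noetherian schemes are quasi-compact and a morphism between separated schemes is separated. Under this hypothesis, the cleanest argument uses that noetherianness ensures the equivalence $D(\Qcoh(X)) \simeq D_{qc}(X)$ (injective quasi-coherent sheaves are injective as $\cO_X$-modules); combined with the fact that $f_*$ sends quasi-coherent sheaves to quasi-coherent sheaves (Serre--Grothendieck), one obtains that $\Rf_*$ preserves quasi-coherent cohomology. Alternatively, and more constructively, one covers $X$ by finitely many affine opens with affine pairwise intersections (possible because $X$ is separated and quasi-compact) and computes $\Rf_*$ of a bounded-below quasi-coherent complex by a finite \v{C}ech-type complex of quasi-coherent sheaves, then bootstraps to the unbounded case via the q-injective resolutions already fixed. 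Either way this is \cite[Prop.~3.9.2]{Lipman_pre}.

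For (3), two ingredients must be combined. The first is Grothendieck's finiteness theorem: for $f$ proper and $F \in \Coh(X)$, each $R^i f_* F$ lies in $\Coh(Y)$ (classically EGA III.3.2.1). The second is a cohomological amplitude bound: since $Y$ is quasi-compact and $f$ is proper, $X$ is quasi-compact noetherian, hence of finite Krull dimension $d$, and Grothendieck vanishing yields $R^i f_* = 0$ for $i > d$. Given $C \in D_{b,c}(X)$, one then applies the hypercohomology spectral sequence $R^p f_* H^q(C) \Rightarrow R^{p+q} f_* C$, or equivalently a soft truncation argument reducing to the case of a single coherent sheaf placed in one degree: only finitely many $H^q(C)$ are nonzero, each contributes coherent cohomology in a bounded range of degrees, and finite combinations preserve both coherence and boundedness.

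The main obstacle is the finite cohomological amplitude in (3): it is what forces the quasi-compactness hypothesis on $Y$, and it is also what allows one to pass from the classical bounded-below statement of the finiteness theorem to an honest statement about $D_{b,c}$. The coherence statement for a single sheaf is the deep classical input, but once one has it the passage to unbounded complexes with bounded coherent homology is purely formal via truncation.
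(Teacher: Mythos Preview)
Parts (1) and (2) of your proposal are essentially the paper's proof: both invoke Spaltenstein's Theorem~A for the existence of $\Rf_*$ and \cite[3.9.2]{Lipman_pre} for preservation of $D_{qc}$.

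For part (3) there is a genuine gap. Your route to bounded cohomological amplitude reads ``since $Y$ is quasi-compact and $f$ is proper, $X$ is quasi-compact noetherian, hence of finite Krull dimension $d$''. Neither step holds in the stated generality: properness over a quasi-compact base makes $X$ quasi-compact but not noetherian (no noetherian hypothesis appears in (3)), and even if $X$ were noetherian, Nagata's examples of noetherian rings of infinite Krull dimension show that finite Krull dimension does not follow, so Grothendieck vanishing on $X$ is unavailable. The paper sidesteps this by citing \cite[3.9.2]{Lipman_pre} directly for the fact that $\Rf_*$ preserves $D_{b,qc}$ when $f$ is proper and $Y$ is quasi-compact; the argument behind that reference does not pass through the Krull dimension of the source. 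The coherence half of your argument---the EGA finiteness theorem for a single coherent sheaf, promoted to $D_{b,c}$ by truncation or the hypercohomology spectral sequence---is correct and matches the paper's citation of \cite[Theorem 3.2.1]{EGA3-2}.
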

\begin{proof}
For existence, see \cite[Theorem A (iii)]{Spaltenstein88} or \cite[Examples 2.7.3]{Lipman09}. For the fact that it restricts to $D_{qc}(-)$
see \cite[3.9.2]{Lipman09}.
In the proper case with $Y$ quasi-compact, it restricts to $D_{b,qc}(-)$ by \cite[3.9.2]{Lipman09} and it then preserves coherence of the cohomology \cite[Theorem 3.2.1]{EGA3-2}. Note that we use \cite[Definition 5.3.1]{EGA1} to define coherent modules on non necessarily noetherian schemes. 
\end{proof}

\begin{prop}
For any morphism $f$ of schemes, the functor $\Rf_*$ is a right adjoint to $\Lf^*$
on $D(-)$ and consequently on all full subcategories to which both
functors restrict.
\end{prop}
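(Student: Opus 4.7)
The plan is to derive the classical adjunction between $f^*$ and $f_*$ (well known at the level of $\cO$-modules) by passing first to the homotopy categories and then to the derived categories via the resolution techniques of Spaltenstein and Lipman already invoked in the previous propositions.

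At the level of complexes of $\cO$-modules, the adjunction $(f^*,f_*)$ extends termwise to an adjunction between $K(Y)$ and $K(X)$, since both functors are additive and commute with shifts and cones. To descend this to $D(Y)$ and $D(X)$, I would fix $B \in D(Y)$ and $A \in D(X)$, choose a q-flat resolution $P \to B$ in $K(Y)$ (so that $\Lf^* B \cong f^* P$ in $D(X)$ by the construction of $\Lf^*$) and a q-injective resolution $A \to J$ in $K(X)$ (so that $\Rf_* A \cong f_* J$ in $D(Y)$), and chain
$$\Hom_{D(X)}(\Lf^* B, A) \;\cong\; \Hom_{K(X)}(f^* P, J) \;\cong\; \Hom_{K(Y)}(P, f_* J),$$
where the first isomorphism follows from q-injectivity of $J$ (morphisms in $D(X)$ into a q-injective complex coincide with morphisms in $K(X)$) together with $\Lf^* B \cong f^* P$, and the second is the underived adjunction at the homotopy level applied termwise.

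The remaining step, and the main obstacle, is to identify the target $\Hom_{K(Y)}(P, f_* J)$ with $\Hom_{D(Y)}(B, f_* J) = \Hom_{D(Y)}(B, \Rf_* A)$. This would be immediate if $f_* J$ were q-injective in $K(Y)$, but $f_*$ does not generally preserve q-injectivity (the argument via q-injectivity of $J$ would require $f^*$ to preserve acyclicity, which holds only for flat $f$). I would bypass this by citing Lipman's direct construction of the derived adjunction in \cite[Section 3.2]{Lipman_pre}, which combines the computation above with Spaltenstein's technique of controlling q-flat and q-injective resolutions simultaneously; an alternative is Neeman's approach via Brown representability on $D_{qc}$. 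Naturality of the resulting hom-set bijection in both variables yields the adjunction on $D(-)$. The restriction to any full subcategory stable under $\Lf^*$ and $\Rf_*$ is then formal: the unit $\mathrm{Id} \to \Rf_* \Lf^*$ and counit $\Lf^* \Rf_* \to \mathrm{Id}$ remain defined there, and the hom-set bijections restrict to bijections between the relevant hom-sets.
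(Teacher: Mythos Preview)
Your proposal is correct and ends up at the same place as the paper: the paper's proof is simply a citation to \cite[Theorem A (iii)]{Spaltenstein88} and \cite[Proposition 3.2.1]{Lipman_pre}, and you arrive at the latter reference after sketching the standard argument and correctly identifying the technical obstacle (that $f_*J$ need not be q-injective). Your expanded discussion is accurate and more informative than the paper's bare citation, though strictly speaking it is not needed.
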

\begin{proof}
See \cite[Theorem A (iii)]{Spaltenstein88} or \cite[Proposition 3.2.1]{Lipman09}. 
\end{proof}

By \cite[Proposition 4.2.5]{Calmes09} applied to the monoidal structure and the functors on the categories $D(-)$,
we obtain the projection morphism
$$\q: \Rf_*(-) \TTens - \to \Rf_*(- \TTens \Lf^*(-))$$

\begin{theo} \label{projMorphIso_theo}
Let $f:X \to Y$ be quasi-compact and quasi-separated \eg\ proper. 
Then the projection morphism $\q$ is an isomorphism on $D_{qc}$.
\end{theo}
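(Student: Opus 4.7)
The plan is to prove this by a standard generation argument in the second variable, reducing to the case where it is $\cO_Y$, where the projection morphism is essentially the identity. Fix $A \in D_{qc}(X)$ and consider the two triangulated functors
$$F_A, G_A : D_{qc}(Y) \to D_{qc}(Y), \qquad F_A(B) = \Rf_*(A) \LTens B, \qquad G_A(B) = \Rf_*\bigl(A \LTens \Lf^* B\bigr),$$
linked by the projection morphism $\q_{A,-} : F_A \to G_A$ of \cite[Proposition 4.2.5]{Calmes08_pre}. I want to show this is an isomorphism for every $B \in D_{qc}(Y)$.

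First I would reduce to the case where $Y$ is affine. The statement is local on $Y$, since for any quasi-compact open immersion $j : V \hookrightarrow Y$ both $\Rf_*$ and $\LTens$ commute with $\Lf j^*$, and $f$ restricted to $f^{-1}(V) \to V$ remains quasi-compact and quasi-separated. So we may assume $Y = \Spec(R)$. Next I would observe that both $F_A$ and $G_A$ commute with arbitrary small coproducts in $B$. For $F_A$ this is immediate since $\LTens$ preserves coproducts in each variable. For $G_A$, the functor $\Lf^*$ is a left adjoint and hence preserves coproducts, and the essential point is that $\Rf_*$ preserves small coproducts on $D_{qc}$ under the quasi-compact and quasi-separated hypothesis, which is exactly Neeman's theorem (see e.g.\ \cite[3.9.3]{Lipman_pre}).

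Consequently the full subcategory $\mathcal{L} = \{B \in D_{qc}(Y) \mid \q_{A,B} \text{ is an isomorphism}\}$ is a localizing subcategory of $D_{qc}(Y)$, that is, it is stable under shifts, triangles, and small coproducts. For $B = \cO_Y$, the isomorphism $\Lf^* \cO_Y \cong \cO_X$ combined with the unit axioms in the symmetric monoidal closed structures forces $\q_{A,\cO_Y}$ to coincide with the identity of $\Rf_*(A)$, so $\cO_Y \in \mathcal{L}$. Since $Y$ is affine, $D_{qc}(Y)$ is generated as a localizing subcategory by the single compact object $\cO_Y$ (Neeman's generation theorem, cf.\ the comment at the end of Section \ref{SymmMonCat_sec}), so $\mathcal{L} = D_{qc}(Y)$.

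The main obstacle is the preservation of small coproducts by $\Rf_*$: this is precisely where the quasi-compact and quasi-separated hypothesis on $f$ is used, and without it the generation argument collapses. Once Neeman's coproduct preservation result and the compact generation of $D_{qc}(Y)$ by $\cO_Y$ are in hand, the rest of the proof is a formal d\'evissage that does not require any further explicit computation with q-flat resolutions or the like.
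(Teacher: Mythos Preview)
Your argument is correct and is precisely the standard d\'evissage that underlies the result. The paper itself gives no independent proof: it simply cites \cite[Proposition 3.9.4]{Lipman_pre}, whose proof is essentially the one you have written (reduce to $Y$ affine, use that $\Rf_*$ preserves small coproducts on $D_{qc}$ for $f$ quasi-compact and quasi-separated as in \cite[3.9.3.2]{Lipman_pre}, and conclude by compact generation of $D_{qc}(Y)$ by $\cO_Y$). One minor point: the reduction ``local on $Y$'' tacitly uses a compatibility of the projection morphism with flat base change along an open immersion; this is routine, but if you want to be fully self-contained you might cite the relevant base-change diagram (e.g.\ \cite[3.7.2]{Lipman_pre}) or simply note that both sides of $\q$ localize correctly because $\Lf j^*$ is monoidal and commutes with $\Rf_*$ for open immersions $j$.
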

\begin{proof}
This is \cite[Proposition 3.9.4]{Lipman09}. 
\end{proof}

\begin{theo}
For any separated morphism $f:X \to Y$ with $X$ and $Y$ separated and quasi-compact, the functor $\Rf_*:D_{qc}(X) \to D_{qc}(Y)$ has a 
right adjoint $f^!$.
\end{theo}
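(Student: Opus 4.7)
The plan is to produce $f^!$ by Brown representability in the triangulated setting, following the approach of Neeman \cite{Neeman96}. Abstractly, one needs two inputs: that $D_{qc}(X)$ is a compactly generated triangulated category, and that $\Rf_*$ preserves arbitrary small coproducts. Once these two facts are in place, the general representability theorem yields the right adjoint immediately.

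First I would recall that for any separated quasi-compact scheme $Y$ (and likewise $X$), the triangulated category $D_{qc}(Y)$ is compactly generated by a set of (shifts and twists of) perfect complexes. This is the B\"okstedt--Neeman result underlying \cite[Sections 2--3]{Neeman96}; it is also used earlier in the excerpt when invoking the existence of the derived quasi-coherator $\Rcoh$ via \cite[Theorem 4.1]{Neeman96}.

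Second, and this is the genuine technical step, I would verify that $\Rf_*: D_{qc}(X) \to D_{qc}(Y)$ commutes with arbitrary small coproducts. Since the schemes in $\Sch$ are noetherian, any morphism $f$ between them is automatically quasi-compact and quasi-separated (the diagonal of a noetherian scheme is quasi-compact, and the preimage of any affine open is again noetherian hence quasi-compact). Under these hypotheses one computes $\Rf_*$ via a finite \v{C}ech-type resolution on an affine cover of $X$, and finite totalizations commute with coproducts, giving the required compatibility. The precise reference is \cite[Lemma 1.4]{Neeman96}, see also \cite[Proposition 3.9.3.2]{Lipman_pre}.

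Finally, with these two inputs in hand, Neeman's Brown representability theorem \cite[Theorem 4.1]{Neeman96} applied to the triangulated functor $\Rf_*$ produces the right adjoint $f^!$ on the level of $D_{qc}$. The main obstacle really lies in the coproduct preservation: it rests on the classical but non-trivial fact that quasi-coherent sheaf cohomology on a quasi-compact quasi-separated scheme commutes with filtered colimits. Once this is accepted, compact generation and Brown representability are purely formal inputs, and the rest of the proof consists in assembling them.
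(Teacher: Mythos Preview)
Your proposal is correct and follows essentially the same route as the paper: both invoke Neeman's Brown representability \cite{Neeman96} (the paper cites Example~4.2 there directly), with compact generation of $D_{qc}$ and coproduct-preservation of $\Rf_*$ as the two inputs. The only cosmetic difference is that the paper first passes through the equivalence $D_{qc}(-)\simeq D(\Qcoh(-))$ from Proposition~\ref{equivCat_prop}(1) since Neeman's statement is phrased for $D(\Qcoh)$, whereas you argue directly in $D_{qc}$; also note that the theorem as stated assumes only separated and quasi-compact, so your appeal to noetherianness to get $f$ quasi-compact is a slight overkill, though harmless in the paper's ambient setting $\Sch$.
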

\begin{proof}
See \cite[Example 4.2]{Neeman96} and use that $D_{qc}(-)$ and $D(\Qcoh(-))$ are
equivalent for separated quasi-compact schemes by Proposition \ref{equivCat_prop} (1).
\end{proof}

\begin{prop} \label{properDualizing_prop}
Let $f:X \to Y$ be a proper morphism of separated noetherian schemes and let
$K$ be a dualizing complex on $Y$. Then $f^! K$ is a dualizing complex on
$X$. If $K$ is an injectively bounded dualizing complex
\ie\ dualizing in the sense of \cite[V. §2]{Hartshorne66}, then $f^! K$ is 
injectively bounded too.
\end{prop}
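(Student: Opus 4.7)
The statement has two dualizing conditions to verify for $f^!K$ (Definition \ref{dualizing_defi}): (a) $\HHom{-, f^!K}$ preserves $D_{b,c}(X)$, and (b) the bidual morphism $\bid_{f^!K}$ is an isomorphism on $D_{b,c}(X)$, together with (c) preservation of finite injective dimension. The central tool is Grothendieck duality in its global form,
$$\Rf_* \HHom{A, f^!K} \cong \HHom{\Rf_* A, K} \qquad \text{for } A \in D_{b,c}(X),$$
which expresses that the canonical morphism $\rr_K : \Rf_* \dual_{f^!K} \to \dual_K \Rf_*$, constructed formally from the adjunction $(\Rf_*, f^!)$ and the projection isomorphism of Theorem \ref{projMorphIso_theo}, is a natural isomorphism (see \cite[Theorem 4.2]{Neeman96} or \cite[Chapter 4]{Lipman_pre}).

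All three properties are local on $Y$, as $\Rf_*$, $\RHom$ and $f^!$ are compatible with restriction along an open immersion $V \subseteq Y$ (using flat base change for $f^!$). I would therefore reduce to the case $Y = \Spec(A)$ affine noetherian. Over an affine base, any proper morphism can be factored using Nagata compactification, and the pseudo-functoriality of $f^!$ reduces the verification of (a), (b), (c) to the two fundamental cases: that of a closed immersion, for which $f^!K \cong \HHom{f_* \cO_X, K}$, and that of a smooth morphism of relative dimension $d$, for which $f^!K \cong \Lf^*K \LTens \omega_{X/Y}[d]$. In each case, direct inspection of the formula yields that $f^!K$ is dualizing and that its injective dimension is controlled by that of $K$ (using that $f_* \cO_X$ is coherent in the closed-immersion case, and that shifting and tensoring with a line bundle preserve injective dimension in the smooth case). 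This is the content of \cite[Chapter V]{Hartshorne66} and \cite[Chapter 4]{Lipman_pre}.

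The main obstacle is the transition from the statement \emph{$\Rf_*\HHom{A, f^!K} \in D_{b,c}(Y)$}, which is immediate from Grothendieck duality and the dualizingness of $K$ (since $\Rf_*A \in D_{b,c}(Y)$ by properness, Theorem~\ref{properDualizing_prop} no longer being needed here), to the desired \emph{$\HHom{A, f^!K} \in D_{b,c}(X)$}. Since $\Rf_*$ is neither conservative nor reflects coherence in general, no purely abstract argument based on $\rr_K$ suffices, and one genuinely needs the local description above. Similarly, for (b) one shows that applying $\Rf_*$ to $\bid_{f^!K, A}$ and invoking Grothendieck duality twice identifies it with $\bid_{K, \Rf_* A}$, which is an isomorphism; but transferring this back to $X$ again requires the local case analysis via closed immersions and smooth morphisms.
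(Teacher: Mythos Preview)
Your overall strategy—reduce to elementary cases via localization and factorization—is the same as the paper's, but the factorization step as written has a gap. Invoking Nagata compactification is off the mark: $f$ is already proper, so there is nothing to compactify. More seriously, a proper morphism $X \to Y$ over an affine base need not factor globally as a closed immersion into a smooth $Y$-scheme (take any proper non-projective variety over a field). So the reduction to ``closed immersion followed by smooth'' is not available as you have stated it, and the Grothendieck-duality identity $\Rf_*\HHom{A,f^!K}\cong\HHom{\Rf_*A,K}$, which you rightly note is insufficient on its own, does not get rescued by your factorization.

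The paper closes this gap by localizing on $X$, not just on $Y$: being dualizing is local on $X$, so it suffices to check that $(f^!K)|_U$ is dualizing for each affine open $U \subset X$. With $Y$ affine and $f$ of finite type, such a $U$ admits a closed embedding $U \hookrightarrow \bbA^n_Y$; taking the closure $\bar{U} \subset \bbP^n_Y$ exhibits $U$ as an open subscheme of a closed subscheme of $\bbP^n_Y$. Compatibility of $(-)^!$ with composition and with open restriction then identifies $(f^!K)|_U$ with the restriction to $U$ of $i^! p^! K$, where $i: \bar{U} \hookrightarrow \bbP^n_Y$ is the closed immersion and $p: \bbP^n_Y \to Y$ the projection. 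The three cases to verify are thus closed immersions, \emph{open} immersions, and the specific projection $\bbP^n_Y \to Y$. The paper handles the first two by citing \cite{Neeman08_pre}; for the projection it uses that $p^!\cO_Y$ is a shifted line bundle together with Lemma~\ref{sspIso_lemm}, and checks the dualizing conditions on Beilinson's generators $p^*B$, $\cO(i)$ of $D_{b,c}(\bbP^n_Y)$. The injectively bounded statement is simply cited from \cite[V, \S 8]{Hartshorne66} or \cite{Verdier69}.
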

\begin{proof}
For the case of injectively bounded complexes, 
see \cite[V, §8]{Hartshorne66} or
\cite[Corollary 3]{Verdier69}. For the general case, we reproduce here a proof
of Neeman.
Since the question of whether $f^!K$ is dualizing is local on $X$, we
may assume $Y$ is affine and restrict to an affine open set $U$ of $X$.
As $f$ is of finite type, we have a factorization $U \to \bbA^n \times Y
\to Y$ for some $n$ where the left arrow is a closed embedding.
Taking the closure of $U$ in $\bbP^n \times Y$, we see that
$U$ can be embedded as an open subset of a closed subset of some $Y
\times \bbP^n$.
Hence we only have to show that
closed immersions, open immersions and projections $Y \times \bbP^n \to Y$
respect dualizing complexes. The case of closed immersions is done in
\cite[Theorem 3.14, Remark 3.17 and Lemma 3.18]{Neeman10}; closed
immersions are finite. The case of open immersions is \cite[Theorem
3.12]{Neeman10}.
For projective morphisms $f:{\bbP}^n_Y \to Y$,
one uses that $\RHom(A,f^!K) \cong \RHom(A,f^!\cO \otimes f^*K)
\cong \RHom(A,f^*K) \otimes f^!\cO$,
using Lemma \ref{sspIso_lemm} below and
that $f^!\cO$ is a shifted line bundle by \cite[Section VII.4]{Hartshorne66}.
Then one checks the conditions of Definition \ref{dualizing_defi}
on objects of the form $f^*B$ and $\cO(i)$
which by a theorem of Beilinson \cite{Beilinson78}
generate $D_{b,c}({\bbP}^n_Y)$ as a thick triangulated
category.
\end{proof}

\section{Pull-back and push-forward for Witt groups} \label{pushDefi_sec}

We can now state the main result of this article: the definition of the push-forward for coherent Witt groups along proper morphisms (Theorem \ref{PushForward_theo}). This section also contains a definition of the pull-backs for morphisms of finite tor-dimension (Theorem \ref{PullBack_theo}).

\medskip
Let $f: X \to Y$ be a morphism of schemes. By \cite[Theorem 4.1.2]{Calmes09} applied to the monoidal categories $D(-)$, $\fh_K: \Lf^* \dual_K \to \dual_{\Lf^*K} \Lf^*$
defines a duality preserving functor $\pair{\Lf^*,\fh_K}$ between triangulated categories with weak dualities, from $(D(Y),\dual_K,\bid_K)$ to $(D(X),\dual_{\Lf^*K},\bid_{\Lf^*K})$, for any object $K$ of $D_{qc}(Y)$.

\begin{theo} \label{PullBack_theo}
Let $f:X \to Y$ be a morphism of schemes such that 
\begin{itemize}
\item[-] the objects $K$ and $\Lf^* K$ are dualizing. 
\item[-] $\Lf^*$ preserves $D_{b,c}$, 
\item[-] $\fh_K$ is an isomorphism on $D_{b,c}(Y)$,
\end{itemize}
Then $\pair{\Lf^*,\fh_K}$ induces a morphism on Witt groups
$$f^*: \W^i(Y,K) \to \W^i(X,\Lf^* K)$$
that we call pull-back. This pull-back therefore exists in particular if $K$ and $\Lf^*K$ are dualizing and
\begin{itemize}
\item[-] $f$ is of finite tor-dimension and $X,Y \in \Sch$ or 
\item[-] for any $f$ and $X, Y \in \Reg$ in which case
$K$ dualizing imples $\Lf^*K$ dualizing by Example \ref{examplesdual}.
\end{itemize}
\end{theo}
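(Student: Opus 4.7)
My plan is to reduce the statement to the general Witt-group functoriality for duality-preserving exact functors between triangulated categories with strong duality, as developed in \cite[Section 2]{Calmes08_pre}. The key input is already at hand: by \cite[Theorem 4.1.2]{Calmes08_pre}, applied to the symmetric monoidal closed category $D(Y)$, the pair $\pair{\Lf^*, \fh_K}$ is a duality-preserving functor from $(D(Y), \dual_K, \bid_K)$ to $(D(X), \dual_{\Lf^*K}, \bid_{\Lf^*K})$, i.e.\ diagram \eqref{dualPres_diag} commutes in $D(X)$ with $F = \Lf^*$ and $\phi = \fh_K$. Since this is only a statement about weak dualities, the remaining task is to restrict it to $D_{b,c}$, where the dualities become strong, and then to invoke the abstract machinery.

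I would carry out the restriction step as follows. First, $\Lf^*$ sends $D_{b,c}(Y)$ into $D_{b,c}(X)$ by hypothesis. Next, since $K$ and $\Lf^*K$ are dualizing, the endofunctors $\dual_K$ and $\dual_{\Lf^*K}$ preserve $D_{b,c}(Y)$ and $D_{b,c}(X)$ respectively, and on these subcategories the bidual morphisms $\bid_K$ and $\bid_{\Lf^*K}$ are isomorphisms, so the restricted dualities are strong. The natural transformation $\fh_K$ restricts to $D_{b,c}(Y)$ and is an isomorphism there by the third hypothesis. The commutativity of diagram \eqref{dualPres_diag} on $D_{b,c}(Y)$ is inherited from its commutativity on $D(Y)$ via the fully-faithful inclusion. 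This packages into a duality-preserving functor between strong-duality triangulated categories $\cC_K \to \cC_{\Lf^*K}$, which by the Witt-group formalism of \cite[Section 2]{Calmes08_pre} induces for each $i \in \bbZ$ the desired morphism $f^*\colon \W^i(Y,K) \to \W^i(X, \Lf^*K)$.

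For the two explicit cases, I would simply verify the hypotheses of the first half using results of Section \ref{functors_sec}. In the finite tor-dimension case over $\Sch$, both the preservation of $D_{b,c}$ by $\Lf^*$ and the fact that $\fh_K$ is an isomorphism on $D_{b,c}(Y)$ come from the relevant propositions of that section, the latter being Proposition \ref{betaIsoFiniteTorDim_prop}; the dualizing hypotheses on $K$ and $\Lf^*K$ are assumed. Over $\Reg$ the same propositions apply to an arbitrary $f$, and ``$K$ dualizing implies $\Lf^*K$ dualizing'' follows from Example \ref{examplesdual}, since dualizing complexes on regular schemes are shifted line bundles and $\Lf^*$ preserves this property. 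I expect the main point requiring care to be essentially bureaucratic: one must check that the weak duality datum inherited from $D(Y)$ genuinely restricts to a strong duality-preserving morphism on $D_{b,c}$. This is however guaranteed once the dualizing hypothesis is in force, and the real work has already been accomplished in constructing the monoidal structures of Section \ref{SymmMonCat_sec} and the natural transformation $\fh_K$.
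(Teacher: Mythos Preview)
Your proposal is correct and follows essentially the same route as the paper's own proof: invoke \cite[Theorem 4.1.2]{Calmes08_pre} for the duality-preserving structure on the large category, then restrict to $D_{b,c}$ using the dualizing hypotheses (this restriction step is what the paper packages as \cite[Lemma 2.2.6 (1)]{Calmes08_pre}), and finally appeal to Proposition \ref{betaIsoFiniteTorDim_prop} for the two special cases. The only difference is that you spell out the restriction argument explicitly rather than citing the lemma.
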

\begin{proof}
This follows from \cite[Theorem 4.1.2 and Lemma 2.2.6 (1)]{Calmes09}.
The Theorem of \loccit\ 
ensures the existence of the appropriate commutative 
diagrams in $D(X)$. The requirements in the Lemma of \loccit\  
that the dualities given by $K$ and $f^*K$ restrict as strong dualities to $D_{b,c}$ are satisfied by assumption, and the requirement that $\fh_K$ is an isomorphism when restricted
to $D_{b,c}$ follows from Proposition \ref{betaIsoFiniteTorDim_prop}.
\end{proof}

\begin{rema}\label{qcvariant_rema}
Note that we obtain the very same pull-back
when starting with the monoidal structure on $D_{qc}$ instead of $D$.
This follows from Proposition \ref{compareBetas_prop}.
\end{rema}
\begin{rema}
In \cite[Theorem 3.12]{Neeman10}, it is proved that if $f$ is an open immersion, then $\Lf^*K$ is automatically dualizing if $K$ is  dualizing.
\end{rema}

Let $X,Y \in \Sch$, let $K \in D_{qc}(Y)$ and let $f: X \to Y$ be a separated morphism. From \cite[Theorem 4.2.9]{Calmes09} applied to the closed monoidal category $D_{qc}(X)$, we obtain a morphism of functors $\rr_K: \Rf_* \dual'_{f^!K} \to \dual'_{K}\Rf_*$. By \loccit, the pair $\pair{\Rf_*,\rr_K}$ is duality preserving, \ie\ Diagram \eqref{dualPres_diag} commutes. 

\begin{theo} \label{PushForward_theo}
Let  $X,Y \in \Sch$ and $f:X \to Y$ be a separated morphism such that $\Rf_*$ preserves $D_{b,c}$. Let $K$ and $f^! K$ be dualizing. Then $\pair{\Rf_*,\rr_K}$ induces a morphisms of Witt groups
$$f_*: \W^i(X,f^! K) \to \W^i(Y,K)$$ 
that we call push-forward. This push-forward is therefore defined in particular if $f$
is proper and $K$ is dualizing (see Proposition \ref{properDualizing_prop}).
\end{theo}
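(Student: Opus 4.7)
The strategy parallels exactly the proof of Theorem \ref{PullBack_theo}: build the duality--preserving structure at the level of the large category $D_{qc}$ where the closed monoidal formalism of \cite{Calmes08_pre} applies, and then check that everything restricts neatly to the subcategory $D_{b,c}$ where strong dualities (and hence Witt groups) live. The culminating step is to invoke \cite[Lemma 2.2.6(1)]{Calmes08_pre}, which turns a duality--preserving functor between triangulated categories with (possibly weak) dualities into a map on Witt groups as soon as both dualities restrict strongly to the relevant subcategories and the connecting natural transformation becomes an isomorphism there.

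First, I would record that by \cite[Theorem 4.2.9]{Calmes08_pre} applied to the symmetric monoidal closed structure on $D_{qc}$ established in Theorem \ref{ClosedMonStructures_theo}, the pair $\pair{\Rf_*, \rr_K}$ is a duality--preserving functor from $(D_{qc}(X),\dualq_{f^!K},\bidq_{f^!K})$ to $(D_{qc}(Y),\dualq_K,\bidq_K)$. In particular, Diagram \eqref{main_diag} commutes in $D_{qc}(Y)$. This is the only place where the abstract machinery of \cite{Calmes08_pre} actually enters, and it takes care of the commutativity issue once and for all.

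Next, I would verify the three hypotheses of \cite[Lemma 2.2.6(1)]{Calmes08_pre} on the subcategory $D_{b,c}$. Since $K$ and $f^!K$ are dualizing, Proposition \ref{compareDualities_prop} gives $\dualq_K \cong \dual_K$ on $D_{b,c}(Y)$ and $\dualq_{f^!K} \cong \dual_{f^!K}$ on $D_{b,c}(X)$, with the bidual morphisms $\bidq$ and $\bid$ agreeing and being isomorphisms by the definition of a dualizing complex; hence both restricted dualities are strong. The functor $\Rf_*$ restricts to $D_{b,c}(X) \to D_{b,c}(Y)$ by hypothesis. Finally, the natural transformation $\rr_K$ automatically restricts, and the commutative square defining the duality--preserving structure restricts with it, so we obtain a duality--preserving functor $\pair{\Rf_*,\rr_K}: \cC_{f^!K} \to \cC_K$ between triangulated categories with strong duality. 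The induced morphism on Witt groups is the desired $f_*$.

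For the \emph{in particular} clause, I would observe that $f$ proper and $Y$ quasi-compact noetherian forces $\Rf_*$ to preserve $D_{b,c}$ by the earlier proposition on $\Rf_*$, while Proposition \ref{properDualizing_prop} supplies that $f^!K$ is dualizing whenever $K$ is (and in particular when $K$ is injectively bounded dualizing). There is no real obstacle: the genuine content has already been packaged into \cite[Theorem 4.2.9 and Lemma 2.2.6]{Calmes08_pre}, the preservation result for $\Rf_*$, and Proposition \ref{compareDualities_prop}. The only point requiring care, and the closest thing to a delicate step, is the agreement of the two a priori different duality structures $\dual_K$ and $\dualq_K$ on $D_{b,c}$; this is precisely what Proposition \ref{compareDualities_prop} is engineered to provide, and without it one would not know that the abstract duality--preserving diagram written in $D_{qc}$ really computes the morphism of Witt groups defined via the monoidal structure on $D_{b,c}$.
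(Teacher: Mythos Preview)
Your outline is close to the paper's argument, but there is one genuine gap: you never verify that $\rr_K$ is an \emph{isomorphism}. You correctly state at the outset that \cite[Lemma 2.2.6(1)]{Calmes08_pre} requires the connecting natural transformation to become an isomorphism on the subcategory, yet in the verification paragraph you only say that ``$\rr_K$ automatically restricts'' and that the square restricts with it. Restricting is not the issue; invertibility is. Without $\rr_K$ being an isomorphism, a symmetric isomorphism $\psi:A \to \dual_{f^!K}A$ is sent to the composite $\Rf_*A \xrightarrow{\Rf_*\psi} \Rf_*\dual_{f^!K}A \xrightarrow{\rr_K} \dual_K \Rf_* A$, which need not be an isomorphism, so you do not get a form on $\Rf_*A$ and hence no map on Witt groups.

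The paper fills this gap by invoking \cite[Prop.~4.3.3]{Calmes08_pre}, which says that $\rr$ is an isomorphism as soon as the projection morphism $\q: \Rf_*(-)\LTens - \to \Rf_*(-\LTens \Lf^*(-))$ is an isomorphism. The latter holds on $D_{qc}$ for any quasi-compact quasi-separated $f$ by Theorem~\ref{projMorphIso_theo}; since $X,Y\in\Sch$ and $f$ is separated, $f$ is automatically quasi-compact and quasi-separated, so this applies. Once you insert this step, your argument matches the paper's exactly (including the use of \cite[Theorem~4.2.9]{Calmes08_pre} on $D_{qc}$, the restriction via the dualizing hypothesis, and the ``in particular'' clause). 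Your emphasis on Proposition~\ref{compareDualities_prop} is reasonable but slightly overstated: the paper treats the restriction of the dualities to $D_{b,c}$ as immediate from the definition of a dualizing complex.
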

\begin{proof}
This follows from \cite[Theorem 4.2.9 and Lemma 2.2.6. (1)]{Calmes09}.
For the Theorem of \loccit, consider 
the triangulated closed monoidal category $D_{qc}$. The fact that $\rr_K$ is an isomorphism 
follows from \cite[Prop. 4.3.3]{Calmes09} using Theorem 
\ref{projMorphIso_theo}. Then, apply the Lemma of \loccit\ to the subcategories $D_{b,c}$, to which the dualities restrict by definition of a dualizing object. 
Note that when $X$ and $Y$ are regular, the complete
proof works using directly $D_{b,c}$ as the triangulated closed monoidal category in \cite[Theorem 4.2.9]{Calmes09}. 
\end{proof}

\section{Properties} \label{properties_sec}

We now show that both push-forwards and pull-backs respect composition and that they commute in an appropriate way (``base change'') provided certain standard conditions hold. We also prove a projection formula for regular schemes.

\begin{theo}
For any $f:X \to Y$ and $g: Y \to Z$, 
\begin{enumerate}
\item there is an isomorphism $\Lf^* \circ \LL g^* \to \LL (g \circ f)^*$
between functors on $D(-)$ which is associative in the usual sense.
\item There is an isomorphim $\RR (g \circ f)_* \to \RR g_* \circ \RR f_*$ between functors on $D(-)$ which is associative in the usual sense, and respects the adjoint couple $(\LL(-)^*,\RR(-)_*)$ in the sense of \cite[Def. 5.1.5]{Calmes09}. 
\item When the schemes are separated and quasi-compact, and both $f$ and $g$ are separated, there is an isomorphism $f^! \circ g^! \to (g \circ f)^!$ between functors on $D_{qc}(-)$ which is associative in the usual sense, and which respects the adjoint couple $(\RR(-)_*,(-)^!)$ in the sense of \cite[Def. 5.1.5]{Calmes09}.
\end{enumerate}
\end{theo}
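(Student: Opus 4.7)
The plan is to handle the three parts sequentially, with parts (1) and (2) coming from explicit resolutions and part (3) following formally from (2) by uniqueness of adjoints.

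For part (1), I would argue as follows. Given $A \in D(Z)$, pick a q-flat resolution $Q \to A$ (which exists functorially by the results of Spaltenstein quoted in Theorem \ref{DXclosedsymm_theo}), so $\LL g^* A$ is represented by $g^* Q$. Because q-flatness is preserved under arbitrary underived pullback (tensor products localize well), $g^* Q$ is q-flat on $Y$, hence $\Lf^* \LL g^* A$ is represented by $f^* g^* Q = (g\circ f)^* Q$, which also represents $\LL(g\circ f)^* A$. The comparison morphism at the level of $K(-)$ is the identity, so we get a natural isomorphism; associativity for a triple $f,g,h$ reduces to the trivial associativity of the underived $(-)^*$ on q-flat complexes. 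The appropriate reference is \cite[Proposition 3.6.4]{Lipman_pre}.

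Part (2) is dual. Given $A \in D(X)$, choose a q-injective resolution $A \to I$. Then $\Rf_* A$ is represented by $f_* I$. The key point is that $f_*$ sends q-injective complexes to q-injective complexes: this follows from $f^*$ preserving quasi-isomorphisms between q-flat complexes together with the definition of q-injectivity, and is recorded in \cite[Proposition 3.2.1]{Lipman_pre} and \cite[Section 6]{Spaltenstein88}. Therefore $\Rg_* \Rf_* A$ is represented by $g_* f_* I = (g\circ f)_* I$, which represents $\RR(g\circ f)_* A$. Again the comparison is the identity on representatives, so we get a natural isomorphism, and associativity is immediate. That this isomorphism respects the adjoint couple $(\LL(-)^*, \RR(-)_*)$ in the sense of \cite[Def. 5.1.5]{Calmes08_pre} is the statement that the compositions of unit and counit morphisms match up; this can be checked on q-flat/q-injective representatives where all the derived units and counits reduce to the underived ones, and the underived statement is the standard compatibility of the adjunctions $(f^*, f_*)$ under composition.

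For part (3), the argument is purely formal once (2) is in place. Since $\Rf_*$ admits $f^!$ as a right adjoint and $\Rg_*$ admits $g^!$ as a right adjoint (both by the theorem before Proposition \ref{properDualizing_prop}, noting that compositions of separated quasi-compact morphisms are separated quasi-compact), the composite $f^! \circ g^!$ is right adjoint to $\Rg_* \circ \Rf_*$. By part (2), this composite is isomorphic to $\RR(g\circ f)_*$, whose right adjoint is $(g\circ f)^!$. By uniqueness of right adjoints up to unique isomorphism of adjoint pairs, we obtain a canonical natural isomorphism $f^! \circ g^! \to (g \circ f)^!$ compatible with the adjunction data, which is precisely the compatibility required by \cite[Def. 5.1.5]{Calmes08_pre}. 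Associativity for a triple of morphisms then follows from the uniqueness clause in this characterization: both possible bracketings of the comparison isomorphism are right adjoint to the same composite $\RR(-)_*$, and so must agree.

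The main obstacle is really part (2), specifically the verification that $f_*$ preserves q-injectivity and that the induced derived composition is compatible with the adjunction unit and counit; however, all of this is already established in \cite{Spaltenstein88} and \cite{Lipman_pre}, so the proof essentially amounts to carefully assembling these references. Parts (1) and (3) are then either parallel to or a formal consequence of (2).
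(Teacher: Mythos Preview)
Your treatment of (1) is fine and matches the paper's, and your derivation of (3) from (2) by uniqueness of right adjoints is exactly the right idea. The problem is in your direct argument for (2).

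The claim that $f_*$ sends q-injective complexes to q-injective complexes is \emph{false} for general morphisms of ringed spaces, and neither \cite[Proposition 3.2.1]{Lipman_pre} nor \cite[Section 6]{Spaltenstein88} asserts it. For a concrete counterexample, take $f:\Spec k \to \Spec k[t]$ given by $t\mapsto 0$. Over the field $k$ every complex is q-injective; in particular $I=k$ in degree $0$ is. But $f_*I$ is $k$ viewed as a $k[t]$-module with $t$ acting by zero, and a complex concentrated in a single degree is q-injective if and only if that object is injective. Since $k$ is not an injective $k[t]$-module (it is not divisible by $t$), $f_*I$ is not q-injective. Your justification (``$f^*$ preserves quasi-isomorphisms between q-flat complexes'') does not help: to conclude q-injectivity of $f_*I$ via the adjunction you would need the underived $f^*$ to preserve \emph{all} acyclic complexes, which it does not since $f^*$ is not exact.

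The paper sidesteps this entirely. It proves only (1) directly (citing \cite[3.6.4 and Scholium 3.6.10]{Lipman_pre}) and then obtains \emph{both} (2) and (3) from (1) by invoking \cite[Lemma 5.1.6]{Calmes08_pre}: once you have an associative composition isomorphism for a family of left adjoints, the right adjoints automatically acquire one, and it is automatically compatible with the adjunction data in the sense of \cite[Def. 5.1.5]{Calmes08_pre}. This is precisely the ``uniqueness of adjoints'' argument you used to pass from (2) to (3); the point is that you should use it already to pass from (1) to (2). Doing so also gives you the ``respects the adjoint couple'' clause in (2) for free, whereas your sketch of that verification (reducing to underived units and counits on representatives) would require further justification even if the q-injectivity claim held.
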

\begin{proof}
For the functors $\Lf^*$ on $D$, the isomorphism is in \cite[Theorem A (iii)]{Spaltenstein88} or \cite[3.6.4]{Lipman09}. For a proof that it is associative, see \cite[Scholium 3.6.10]{Lipman09}. The other points follow from the first one by \cite[Lemma 5.1.6]{Calmes09}.
\end{proof}

\begin{theo} \label{compoPullBack_theo}
The pull-back respects composition: the diagram
$$\xymatrix{
\W^i(Z,K) \ar[r]^-{g^*} \ar[drr]_{(gf)^*} & \W^i(Y,\LL g^* K) \ar[r]^-{f^*} & \W^i(X,\Lf^*\LL g^*K) \ar@{-}[d]^{\wr} \\
 & & \W^i(X, \LL (gf)^* K) 
}$$ 
commutes, under the conditions for the existence of the pull-backs $f^*$ and $g^*$ of Theorem \ref{PullBack_theo}.
\end{theo}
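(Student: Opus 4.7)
The plan is to reduce the statement to the general functoriality of the Witt-group construction under composition of duality-preserving functors, as established in \cite{Calmes08_pre}. Once one knows that the composite duality-preserving functor $\pair{\Lf^*, \fh^f_{\LL g^*K}} \circ \pair{\LL g^*, \fh^g_K}$ coincides, under the canonical associativity isomorphism $\Lf^* \LL g^* \cong \LL(gf)^*$, with $\pair{\LL(gf)^*, \fh^{gf}_K}$, the claimed equality follows from \cite[Lemma 2.2.6]{Calmes08_pre} (duality-preserving functors that are isomorphic induce the same map on Witt groups) and from the fact that $\W$ is functorial with respect to composition of such pairs.

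First I would verify that the associativity isomorphism $\Lf^* \circ \LL g^* \to \LL(gf)^*$ is not merely an isomorphism of triangulated functors but one of symmetric monoidal functors, i.e., that it intertwines the transformations $\fp^f \LL g^* \circ \Lf^* \fp^g$ and $\fp^{gf}$. On the concrete side this reduces to the fact that a composite of q-flat resolutions is a q-flat resolution and to the standard compatibility of $f^*(A \otimes B) \cong f^* A \otimes f^* B$ with composition; on the abstract side it is contained in the composition calculus for symmetric monoidal functors carried out in \cite[Section 4]{Calmes08_pre}. Since $\fh$ is defined purely out of the adjunction $(\LTens, \HHom{-,-})$ and the monoidal structure $\fp$ via \cite[Proposition 4.1.1]{Calmes08_pre}, a monoidal isomorphism of monoidal functors automatically induces a compatible isomorphism between the corresponding $\fh$'s: concretely, the morphism $\fh^f_{\LL g^* K} \circ \Lf^* \fh^g_K$ is identified with $\fh^{gf}_K$ under the associativity isomorphism.

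Finally, one must check that the hypotheses of Theorem \ref{PullBack_theo} hold simultaneously for $f$, $g$ and $gf$. This is routine: in the finite-tor-dimension case, composition of morphisms of finite tor-dimension is again of finite tor-dimension, and in the regular case all three morphisms sit in $\Reg$ by assumption. In both cases, Example \ref{examplesdual} together with Proposition \ref{betaIsoFiniteTorDim_prop} guarantee that $\LL g^* K$, $\Lf^*\LL g^* K$ and $\LL(gf)^* K$ are all dualizing and that each of $\fh^f$, $\fh^g$, $\fh^{gf}$ is an isomorphism on $D_{b,c}$.

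The main obstacle is the compatibility of the $\fh$'s with composition in the first step; this is essentially a diagram chase in the closed symmetric monoidal structure of $D$ (or of $D_{qc}$, which gives the same answer by Proposition \ref{compareBetas_prop} and Remark \ref{qcvariant_rema}), and all such coherences are packaged once and for all in the abstract framework of \cite[Sections 4--5]{Calmes08_pre}, from which we can simply quote them.
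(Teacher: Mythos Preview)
Your proposal is correct and follows essentially the same route as the paper: both reduce the statement to the abstract composition calculus for duality-preserving monoidal functors developed in \cite{Calmes08_pre}. The paper's proof is a one-liner invoking \cite[Theorem 5.1.3 and Cor.\ 5.1.4]{Calmes08_pre} applied to the structures on $D(-)$, which packages exactly the compatibility of the $\fh$'s under composition and the monoidality of the associativity isomorphism that you spell out; your discussion is a faithful unfolding of what those cited results contain.
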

\begin{proof}
This follows from \cite[Theorem 5.1.3 and Cor. 5.1.4]{Calmes09} applied to the structures on $D(-)$.
\end{proof}

\begin{theo} \label{compoPushForward_theo}
The push-forward respects composition: the diagram 
$$\xymatrix{
\W^i(X,f^! g^! K) \ar@{-}[d]_{\wr} \ar[r]^-{f_*} & \W^i(Y,g^! K) \ar[r]^-{f^*} & \W^i(Z,K) \\
 \W^i(X, (gf)^! K) \ar[urr]_{(gf)_*}
}$$ 
commutes, under the conditions for the existence of the push-forward of Theorem \ref{PushForward_theo}.
\end{theo}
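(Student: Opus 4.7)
The plan is to mirror the strategy used for pull-backs in Theorem \ref{compoPullBack_theo}, but working with the pair of adjunctions $(\Lf^*, \Rf_*)$ and $(\Rf_*, f^!)$ in the unbounded symmetric monoidal closed category $D_{qc}$. The goal is to show that the composite duality-preserving functor $\pair{\Rg_*, \rr^g_K} \circ \pair{\Rf_*, \rr^f_{g^!K}}$ coincides, via the canonical composition isomorphisms $\Rg_* \Rf_* \cong \RR(gf)_*$ and $f^! g^! \cong (gf)^!$ from the preceding theorem, with the duality-preserving functor $\pair{\RR(gf)_*, \rr^{gf}_K}$. Once this is established in $D_{qc}$, one restricts to $D_{b,c}$ as in the proof of Theorem \ref{PushForward_theo}, using \cite[Lemma 2.2.6 (1)]{Calmes08_pre}; the strong duality conditions on $D_{b,c}$ for $K$, $g^! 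K$ and $f^!g^!K \cong (gf)^!K$ are guaranteed by the hypotheses ensuring existence of each individual push-forward.

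The central technical step is to verify that the natural transformation $\rr^{gf}_K$ decomposes, up to the composition isomorphisms, as the pasting $\Rg_* \rr^f_{g^!K} \circ \rr^g_K \Rf_*$. This is exactly the content of \cite[Theorem 5.1.3 and Cor. 5.1.4]{Calmes08_pre} applied in the push-forward setting: the two composition isomorphisms respect the adjoint couples $(\Lf^*, \Rf_*)$ and $(\Rf_*, f^!)$ in the sense of \cite[Def. 5.1.5]{Calmes08_pre} (precisely the statement recalled in the theorem preceding Theorem \ref{compoPullBack_theo}), and the $\rr$ morphisms are constructed formally from the projection morphism $\q$ together with these adjunctions, so the abstract compositional machinery of \loccit{} applies verbatim.

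The main obstacle will be checking that the projection morphism $\q$ itself behaves well under composition, so that the construction of $\rr^{gf}_K$ from $\q^{gf}$ agrees with the pasting of the constructions of $\rr^f_{g^!K}$ and $\rr^g_K$ from $\q^f$ and $\q^g$. By Theorem \ref{projMorphIso_theo}, $\q$ is an isomorphism on $D_{qc}$ for each of $f$, $g$ and $gf$, and the compatibility of $\q$ with composition is a formal consequence of the uniqueness of the natural transformation coming from the monoidal-plus-adjunction data — it is part of the abstract input to \cite[Theorem 5.1.3]{Calmes08_pre}. With this in hand, the commutative diagram asserted in the theorem reduces to the commutativity, in the triangulated category with duality $(D_{qc}(Z), \dualq_K, \bidq_K)$, of the diagram of duality-preserving functors and morphisms of duality-preserving functors produced by \loccit{}, which in turn descends to the corresponding Witt group diagram by functoriality of $\W^i$ on duality-preserving exact functors.
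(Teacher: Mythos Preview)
Your overall strategy is correct and matches the paper's: one invokes the abstract composition machinery of \cite{Calmes08_pre} for the symmetric monoidal closed categories $D_{qc}(-)$, and then restricts to $D_{b,c}$. However, you cite the wrong abstract result. Theorem~5.1.3 and Corollary~5.1.4 of \cite{Calmes08_pre} handle the \emph{pull-back} side (monoidal functors $f^*$ and the morphism $\fh$); the push-forward side, involving the right adjoints $\Rf_*$, $f^!$ and the morphism $\rr$, is treated separately in \cite[Theorem~5.1.9 and Corollary~5.1.10]{Calmes08_pre}, which is what the paper actually invokes. Your second and third paragraphs are essentially an informal sketch of what those latter results establish (compatibility of $\rr$ with composition, built on compatibility of $\q$ and the adjunctions), so the content is right but the reference is not; once you point to 5.1.9/5.1.10, the elaboration you give becomes redundant and the proof collapses to the paper's one line.
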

\begin{proof}
This follows from \cite[Theorem 5.1.9 and Cor. 5.1.10]{Calmes09} applied to the structures on $D_{qc}(-)$.
\end{proof}

We now prove a base change formula. Let us consider a pull-back diagram
$$\xymatrix{
V \ar[d]_{\bar{f}} \ar[r]^{\bar{g}} & Y \ar[d]^{f} \\
X \ar[r]_{g} & Z
}$$
By \cite[Section 5.2]{Calmes09}, 
we obtain a morphism of functors
$$\eps: \Lf^* \RR g_* \to \RR \bar{g}_* \LL \bar{f}^*$$
between functors on $D(X)$.

\begin{prop} \label{torIndep_prop}
If all schemes are in $\Sch$ and the diagram is tor-independent, \eg\ $f$ flat, the morphism $\eps$ is an isomorphism on $D_{qc}(X)$.
\end{prop}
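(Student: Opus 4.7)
The plan is to identify our abstractly-defined $\eps$ with the classical flat base change morphism from Grothendieck duality and then invoke the known isomorphism result in the tor-independent case. Concretely, I would argue that the base change morphism built in \cite[Section 5.2]{Calmes08_pre} out of the unit of $(\LL \bar{f}^*, \RR \bar{f}_*)$, the counit of $(\Lf^*,\Rf_*)$ and the canonical isomorphism $\bar g^* f^* \cong g^* f^*$ associated to the commutative square coincides (up to unique isomorphism) with the classical base change morphism constructed for instance in \cite[Section 3.7, (3.7.2)]{Lipman_pre}. This identification is a purely formal consequence of uniqueness of morphisms obtained from a given adjunction datum, since both $\eps$'s are built out of the same units and counits.

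Once the identification is made, the statement reduces to \cite[Proposition 3.10.3]{Lipman_pre} (or the tor-independent base change result in Lipman's book in that vicinity), which asserts precisely that for a tor-independent cartesian square of quasi-compact separated morphisms between quasi-compact separated schemes, the classical base change morphism is an isomorphism on $D_{qc}$. Our hypothesis that all four schemes lie in $\Sch$ guarantees the quasi-compactness and separatedness needed, and the flatness of $f$ is a standard sufficient condition for tor-independence.

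For the reader wanting a more hands-on argument, the reduction is as follows. Being an isomorphism in $D_{qc}(Y)$ is local on $Y$, so we may assume $Y = \Spec C$, and similarly we may assume $Z = \Spec B$ and (by a Cech/Mayer--Vietoris argument using that $g$ is quasi-compact and separated) reduce to the case $X = \Spec A$ affine. In this affine situation $V = \Spec(A \otimes_B C)$, tor-independence becomes $\mathrm{Tor}^B_i(A,C) = 0$ for $i > 0$, and for a complex $M$ representing an object of $D_{qc}(X)$ the morphism $\eps_M$ is explicitly the natural map $M \LTens_B C \to M \otimes_A (A \LTens_B C)$, which is an isomorphism precisely because of tor-independence (after replacing $M$ by a q-flat resolution as in Theorem \ref{DXclosedsymm_theo}).

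The main obstacle is the first step, namely verifying that the categorical $\eps$ coming from the monoidal-closed adjunction machinery of \cite{Calmes08_pre} really agrees with Lipman's morphism. This is essentially formal but requires tracking several units and counits through the definitions; everything else is a direct citation.
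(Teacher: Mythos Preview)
Your proposal is correct and follows the same approach as the paper: both reduce the claim to Lipman's tor-independent base change result \cite[Theorem 3.10.3]{Lipman_pre} (with \cite[Proposition 3.9.5]{Lipman_pre} for the flat case). The paper's proof is in fact terser than yours---it simply cites Lipman without discussing the identification of the abstract $\eps$ of \cite{Calmes08_pre} with Lipman's base change morphism---so your explicit acknowledgment that both are built from the same adjunction data, together with your supplementary affine reduction, adds detail the paper leaves implicit.
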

\begin{proof}
The case where $f$ is flat 
is \cite[Proposition 3.9.5]{Lipman09} (all maps between schemes in $\Sch$
are ``concentrated" in the sense of \loccit). The more general case is \cite[Theorem 3.10.3]{Lipman09}. 
\end{proof}

Then, when the schemes are in $\Sch$, still by \cite[Section 5.2]{Calmes09}, $\eps$ induces a morphism
$$\gam: \LL \bar{f}^* g^! \to \bar{g}^! \Lf^*$$
between functors on $D_{qc}(X)$. It is an isomorphism on the subcategory $D_{-,qc}$ by 
\cite[Corollary 4.4.3]{Lipman09}. In particular, $\gam_K$ is an isomorphism when $K$ is dualizing (and thus in $D_{b,c}(Z)$).

\begin{theo}[Base change] \label{baseChange_theo}
Under the assumptions of Proposition \ref{torIndep_prop} and the ones for the pull-backs along $f$ and $\bar{f}$ and the push-forwards along $g$ and $\bar{g}$ to exist (Theorems \ref{PullBack_theo} and \ref{PushForward_theo}), the pull-back and push-forward satisfy a base change formula: the diagram
$$\xymatrix@R=4ex@C=4ex{
 & \W^i(V,\bar{g}^! \Lf^* K) \ar[r]^-{\bar{g}_*} & \W^i(Y, \Lf^* K) \\
\W^i(V,\LL\bar{f}^* g^!K) \ar[ur]_{\sim}^{\gam} & & \\
\W^i(X, g^! K) \ar[u]^{\bar{f}^*} \ar[rr]_-{g_*} & & \W^i(Z,K) \ar[uu]_{f^*}
}$$
commutes. 
\end{theo}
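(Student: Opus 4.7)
The plan is to reduce this commutativity to an abstract base change theorem for duality-preserving functors between triangulated categories with dualities proved in \cite[Section 5.2]{Calmes08_pre}. That abstract statement takes as input a base-change square of symmetric monoidal closed categories together with the adjoint pairs $(\Lf^*,\Rf_*)$ and $(\Rf_*,f^!)$ on each side and produces the analogous commutative diagram of Witt groups, provided the base-change natural transformations $\eps$ and $\gam$ are isomorphisms on the relevant subcategories. So the proof reduces to verifying these hypotheses in our geometric situation and invoking the abstract result.

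First I would work in the ambient unbounded derived categories $D_{qc}(-)$, where the symmetric monoidal closed structure, the two adjunctions $(\Lf^*,\Rf_*)$ and $(\Rf_*,f^!)$, and the natural transformations $\rr$, $\q$, $\fh$, $\eps$ and $\gam$ all live naturally. Proposition \ref{torIndep_prop} gives that $\eps$ is an isomorphism on $D_{qc}$ under our tor-independence assumption. The adjunction-formal construction of $\gam$ from $\eps$ described in \cite[Section 5.2]{Calmes08_pre}, combined with \cite[Corollary 4.4.3]{Lipman_pre}, then yields that $\gam_K$ is an isomorphism for any $K$ in $D_{-,qc}$, in particular for the dualizing complex $K$. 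These are precisely the inputs required by the abstract base change of \loccit{}, which I would then apply to the tuple $(f,g,\bar f,\bar g,K,\eps,\gam)$ to obtain the commutative diagram of Witt groups computed using the $D_{qc}$-dualities $\dualq_{(-)}$.

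Second I would descend from $D_{qc}$ to $D_{b,c}$. Proposition \ref{compareDualities_prop} ensures that $\dualq_K$ and $\dual_K$ (and their biduals) coincide on $D_{b,c}$, so the corresponding Witt groups agree. Proposition \ref{compareBetas_prop} shows that the pull-back maps $f^*$ and $\bar f^*$ defined using either variant of the internal Hom agree on $D_{b,c}$, and Remark \ref{qcvariant_rema} gives the same for the push-forwards $g_*$ and $\bar g_*$. Consequently, the abstract base change diagram in the $D_{qc}$-Witt groups transports verbatim to the diagram in the statement.

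The main obstacle is really packaging the naturality: one must confirm that the version of $\gam$ appearing in the diagram (defined via adjunction from $\eps$) is exactly the one entering the abstract hypothesis of \cite[Section 5.2]{Calmes08_pre}, and that its restriction to dualizing complexes is indeed an isomorphism rather than merely a natural transformation. Both points, however, are handled by the general categorical framework of \emph{loc. cit.} together with \cite[Corollary 4.4.3]{Lipman_pre}, so concretely this proof should amount to a short citation-based verification rather than a new diagram chase.
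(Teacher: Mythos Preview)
Your proposal is correct and follows essentially the same route as the paper: apply the abstract base change \cite[Theorem 5.2.1 and Corollary 5.2.2]{Calmes08_pre} to the symmetric monoidal closed structures on $D_{qc}(-)$, using that $\eps$ (Proposition \ref{torIndep_prop}) and $\gam_K$ (\cite[Corollary 4.4.3]{Lipman_pre}) are isomorphisms, and then invoke Remark \ref{qcvariant_rema} to identify the resulting maps with those defined on $D_{b,c}$. Your write-up is simply more explicit about the descent step than the paper's terse one-line proof.
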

\begin{proof}
This follows from \cite[Theorem 5.2.1 and Corollary 5.2.2]{Calmes09}
applied to the structures on $D_{qc}(-)$, keeping in mind Remark 
\ref{qcvariant_rema}.
\end{proof}

We conclude this section with a projection formula for Witt groups, in the case of regular schemes. For this, we first need to introduce another natural morphism that will anyway be of some use even in the case of non regular schemes. 

When $f:X \to Y$ is a separated morphism in $\Sch$, using the functors $\Lf^*$ (monoidal), $\Rf_*$ and $f^!$ between the categories $D_{qc}(X)$ and $D_{qc}(Y)$, and the fact that the projection morphism $\q$ is an isomorphism by Theorem \ref{projMorphIso_theo}, we obtain a morphism of functors
$$\ssp: f^!(-) \LTens \Lf^*(-) \to f^!(- \LTens -)$$
by \cite[Proposition 4.3.1]{Calmes09}. 

\begin{lemm} \label{sspIso_lemm}
The morphism $\ssp: f^!A \LTens \Lf^*B \to f^!(A \LTens B)$ is an isomorphism when $B$ is a perfect complex.
\end{lemm}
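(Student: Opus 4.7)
The plan is a standard \emph{d\'evissage} through thick subcategories, reducing the general perfect case to the trivial case $B = \cO_Y$. Since the functors $\Lf^*$, $\Rf_*$, $f^!$ and $\LTens$ all commute with restriction to open subsets of $Y$ (and correspondingly of $X$), and since $\ssp$ is natural in both variables, the property that $\ssp_{A,B}$ is an isomorphism can be tested locally on $Y$. By the very definition of perfect, $B$ is locally quasi-isomorphic to a bounded complex of finite-rank free $\cO_Y$-modules, so after passing to an affine open cover of $Y$ (and its preimage in $X$), we may assume $B$ is such a strictly perfect complex.

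Fix $A \in D_{qc}(Y)$ and let $\mathcal{T}_A \subseteq D_{qc}(Y)$ be the full subcategory of those $B$ for which $\ssp_{A,B}$ is an isomorphism. The source $B \mapsto f^!(A) \LTens \Lf^*(B)$ and the target $B \mapsto f^!(A \LTens B)$ are both triangulated functors of $B$, and $\ssp$ is a morphism between them. Applying the five-lemma in a distinguished triangle then shows $\mathcal{T}_A$ is stable under shifts and cones, hence is a triangulated subcategory of $D_{qc}(Y)$.

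The base case is $B = \cO_Y$. Unwinding the definition of $\ssp$ from \cite[Proposition 4.3.1]{Calmes08_pre}, one checks that $\ssp_{A, \cO_Y}$ becomes, after identifying $\Lf^* \cO_Y \cong \cO_X$ via the unit of the monoidal functor $\Lf^*$ and $(-) \LTens \cO_{(-)} \cong \mathrm{Id}$ via the unit constraint of the monoidal structure, the identity on $f^!(A)$; this is a matter of coherence between the monoidal structures on $D_{qc}(X)$ and $D_{qc}(Y)$, the symmetric monoidality of $\Lf^*$, and the $(\Rf_*, f^!)$ adjunction used to construct $\ssp$. Hence $\cO_Y \in \mathcal{T}_A$.

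Finally, a strictly perfect complex $B$, being a bounded complex whose terms are finite direct sums $\cO_Y^{n_i}$, can be built up from (shifts of) $\cO_Y$ by iterated mapping cones: the brutal truncations give distinguished triangles $\sigma_{\geq k+1} B \to \sigma_{\geq k} B \to B^k[-k]$, and induction on the length reduces to the case of a single free module, itself a finite sum of copies of $\cO_Y$. Therefore $B \in \mathcal{T}_A$, proving that $\ssp_{A,B}$ is an isomorphism. The only delicate point is verifying the base case $B = \cO_Y$, which is purely a coherence calculation in the symmetric monoidal closed framework of \cite{Calmes08_pre}; everything else is formal.
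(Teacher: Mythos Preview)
Your proof is correct and follows essentially the same strategy as the paper: localize to reduce $B$ to a strictly perfect complex, d\'evissage via a triangulated subcategory argument to reduce to the base case $B=\cO_Y$, and verify that $\ssp_{A,\cO_Y}$ is the unit isomorphism. The only point where the paper is more careful than you is the localization step: the claim that $\ssp$ is compatible with restriction to open subsets (equivalently, that $f^!$ base-changes along open immersions in a way compatible with $\ssp$) is not entirely formal, and the paper cites a specific commutative diagram from \cite[Proposition 5.2.5]{Calmes08_pre} to justify it, whereas you assert it without reference.
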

\begin{proof}
The morphism $\ssp$ is compatible with open immersions by Diagram 41 of \cite[Proposition 5.2.5]{Calmes09}, and so we can restrict to the case of bounded complexes of vector bundles, then to vector bundles, then again using open immersions to the trivial bundle $\cO_Y$. In that case, one can show that $\ssp_{A,\cO_Y}$ coincides with the unit morphism of the monoidal structure $f^!(A)\LTens \cO_X \to f^!(A)$, and is therefore an isomorphism. By this coincidence we mean that the left diagram
$$\xymatrix@R=4ex{
f^! A \LTens \Lf^* \cO_Y \ar[r]^-{\ssp} & f^!(A \LTens \cO_Y) \\
f^! A \LTens \cO_X \ar@{-}[r]^-{\sim} \ar@{-}[u]^{\wr} & f^!A \ar@{-}[u]^{\wr}
}
\xymatrix@R=4ex{
\Rf_* B \LTens \cO_Y \ar[r]^-{\q} & \Rf_*(B \LTens \Lf^* \cO_Y) \\
\Rf_* B \ar@{-}[r]^-{\sim} \ar@{-}[u]^{\wr} & \Rf_* (B \LTens \cO_X) \ar@{-}[u]^{\wr}
}
$$ 
is commutative, in which the left vertical morphism is the fact that $\Lf^*$ is monoidal and in particular respects units, and the bottom and right maps are unit morphisms of the monoidal structures. By following the definition of $\ssp$ given in \cite[Proposition 4.3.1]{Calmes09} the commutativity of the left diagram follows from the one of the right hand side, which is in turn implied, using the definition of $\pi$ in \cite[Proposition 4.2.5]{Calmes09}, by the compatibility of the unit and monoidal structure morphisms for $\Lf^*$.
\end{proof}

For any scheme $X$ in $\Reg$, the derived tensor product preserves $D_{b,c}(X)$ (Theorem \ref{DXclosedsymm_theo} (3)). This gives two different products on Witt groups by the formalism of \cite{Gille03}, using \cite[Proposition 4.4.6 and Corollary 4.4.7]{Calmes09} applied to the closed monoidal structure of $D_{b,c}(X)$. We fix one of these products (say, the left one) for the following results. When $K$ and $L$ are shifted line bundles, thus dualizing complexes, the product is a pairing 
$$\W^i(X,K) \times \W^j(X,L) \to \W^{i+j}(X, K \otimes L).$$

\begin{theo}[Projection formula] \label{ProjFormula_theo}
For any proper morphism $f:X \to Y$
with $X,Y \in \Reg$, the pull-back and push-forward satisfy a projection formula: 
If $K,L$ are shifted line bundles on $Y$,
$x \in \W^i(X,f^!K)$ and $y \in \W^j(Y,L)$,
then $f_*(I(x.f^*y))=f_*(x).y$
in $\W^{i+j}(Y,K \otimes L)$ with $I$ the isomorphism from $\W^i(X,f^!K \otimes f^* L)$ to $\W^i(X,f^!(K \otimes L))$ induced by $\ssp_{K,L}$.
\end{theo}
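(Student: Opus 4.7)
The plan is to deduce the projection formula from an abstract statement about triangulated symmetric monoidal closed categories with duality, applied to $D_{b,c}(X)$ and $D_{b,c}(Y)$ (which are themselves symmetric monoidal closed by Theorem \ref{ClosedMonStructures_theo} (3), since $X, Y \in \Reg$), and then to reduce the check to coherence of the natural transformations $\rr$, $\fh$, $\ssp$ and $\q$.

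First, I would unpack the statement at the level of forms. A class $x \in \W^i(X, f^! K)$ is represented by a symmetric isomorphism $\phi: A \to \dual_{f^! K} A$, and $y \in \W^j(Y, L)$ by $\psi: B \to \dual_L B$. The pull-back $f^* y$ is the form $\fh_L \circ \Lf^* \psi$ in $\W^j(X, \Lf^* L)$, and the left product $x \cdot f^* y$ is the form built on $A \LTens \Lf^* B$ with duality $\dual_{f^! K \TTens \Lf^* L}$ via the formalism of \cite[Prop.\ 4.4.6]{Calmes08_pre}. The isomorphism $I$ is induced by applying $\HHom{A \LTens \Lf^* B, -}$ to $\ssp_{K,L}$. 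Applying $f_*$ then uses $\Rf_*$ together with $\rr_{K \TTens L}$. On the other side, $f_*(x) \cdot y$ is the form on $\Rf_* A \TTens B$ with duality $\dual_{K \TTens L}$ built from $\rr_K \circ \Rf_* \phi$ and $\psi$.

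The heart of the proof is to identify the two resulting forms via the projection isomorphism $\q_{A,B}: \Rf_* A \TTens B \xrightarrow{\sim} \Rf_*(A \LTens \Lf^* B)$ of Theorem \ref{projMorphIso_theo}. This reduces the theorem to the commutativity of a diagram of natural transformations in $D_{b,c}(Y)$ relating the four morphisms $\rr_{K \TTens L}$, $\ssp_{K,L}$, $\q_{A,B}$, $\fh_L$ and the duality of a tensor product of forms. This is exactly the type of coherence statement proved in \cite{Calmes08_pre}: the natural transformations $\rr$ and $\ssp$ are defined adjointly from $\q$ and the unit/counit of $(\Lf^*, \Rf_*)$ and $(\Rf_*, f^!)$, and their compatibility is governed by the general projection formula diagram in \cite[Section 4.3]{Calmes08_pre} (specifically the one around Prop.\ 4.3.1). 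I would therefore first reduce the entire statement to invoking this coherence, rather than compute with forms, by noting that Lemma \ref{sspIso_lemm} ensures $\ssp_{K,L}$ is an isomorphism (since $L$, being a shifted line bundle, is perfect) and Theorem \ref{projMorphIso_theo} ensures $\q$ is an isomorphism.

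The main obstacle is the bookkeeping: writing down the large diagram that encodes ``tensor product of two forms, one pulled back, then pushed forward equals pushed-forward form tensored with the other'' in a way suitable for direct invocation of the coherence theorems of \cite{Calmes08_pre}. One then has to verify carefully that the isomorphism $I$ matches the isomorphism produced by the abstract formalism between the two duality structures $\dual_{f^!(K \TTens L)}$ and $\dual_{f^! K \TTens \Lf^* L}$ on $A \LTens \Lf^* B$. Once this identification is made, the commutativity reduces to a purely symbolic manipulation of $\rr$, $\q$, $\ssp$ and the monoidal coherence isomorphisms, which is handled by the machinery of symmetric monoidal closed categories; no scheme-theoretic input is needed beyond the isomorphism statements already cited.
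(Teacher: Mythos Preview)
Your approach is essentially the same as the paper's: invoke the abstract projection formula from \cite{Calmes08_pre} for the symmetric monoidal closed categories $D_{b,c}(X)$ and $D_{b,c}(Y)$ (available by regularity), after checking that $\ssp_{K,L}$ is an isomorphism via Lemma~\ref{sspIso_lemm}. The paper's proof is just this, citing \cite[Theorem 5.5.1 and Corollary 5.5.2]{Calmes08_pre} for the ready-made abstract statement rather than the material around Proposition~4.3.1; the large coherence diagram you describe as the ``main obstacle'' is precisely what that theorem packages, so no further unpacking at the level of forms is needed.
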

\begin{proof}
First note that $L$ being a shifted line bundle explains the absence of derivations in the pull-back and tensor products above. Then, the morphism $\ssp_{K,L}$ is an 
isomorphism by Lemma \ref{sspIso_lemm}, 
thus the result follows from 
\cite[Theorem 5.5.1 and Corollary 5.5.2]{Calmes09} applied to the closed monoidal structure on $D_{b,c}$.
\end{proof}

\section{Reformulations in special cases} \label{reformulations_sec}

In this section, we give other canonical ways of writing the push-forward, under additional assumptions. 
\begin{nota}
For an equidimensional morphism $f: X \to Y$ of relative dimension $n$, let
$\rel_f$ denote the object $f^!(\cO_Y)[-n]:=T^{-n}f^!(\cO_Y)$.
\end{nota}
This notation is motivated by the fact that in several cases, this object can be identified with a geometric object called a relative dualizing sheaf and usually denoted $\rel_f$: see sections \ref{regClosedEmb_sec} and \ref{projSpace_sec} for the examples of regular embeddings and projective spaces.

\subsection{Relative dualizing sheaf} \label{relDual_sec}

\begin{theo} \label{PushForward2_theo}
Let $f:X \to Y$ be a proper morphism in $\Sch$, $K$ a dualizing complex on $Y$ such that $f^! K$ is a dualizing complex and assume that $\ssp_{\cO_Y,K}: f^! \cO_Y \LTens \Lf^* K \to f^!K$ is an isomorphism. Then we can rewrite the push-forward of Theorem \ref{PushForward_theo} as
$$f_*: \W^{i+d}(X,\rel_f \LTens \Lf^*K) \to \W^i(Y,K).$$ 
In particular, the hypotheses and therefore the conclusion hold if $K$ is dualizing and either of the two following conditions hold.
\begin{itemize}
\item[-] $f$ is quasi-perfect (see below, \eg\ of finite tor-dimension) and $f^! K$ is dualizing.
\item[-] $Y$ is a Gorenstein scheme, \eg\ regular.
\end{itemize}
\end{theo}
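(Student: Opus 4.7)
The strategy is to reinterpret the dualizing complex $f^!K$ used in Theorem \ref{PushForward_theo} via the given isomorphism $\ssp_{\cO_Y,K}$, and then apply the standard shift compatibility of Witt groups.

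First, I combine $\ssp_{\cO_Y,K}\colon f^!\cO_Y \LTens \Lf^*K \to f^!(\cO_Y \LTens K)$ with the unit isomorphism $\cO_Y \LTens K \cong K$ of the symmetric monoidal structure on $D_{qc}(Y)$; this yields an isomorphism $f^!\cO_Y \LTens \Lf^*K \xrightarrow{\sim} f^!K$ in $D_{qc}(X)$. Since $f^!K$ is dualizing it lies in $D_{b,c}(X)$, so the left-hand side lies there too, and shifting by $T^{-d}$ (using $\rel_f = f^!\cO_Y[-d]$) gives an isomorphism
$$\rel_f \LTens \Lf^*K \;\cong\; f^!(K)[-d]$$
in $D_{b,c}(X)$. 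In particular $\rel_f \LTens \Lf^*K$ is itself a dualizing complex, so its Witt groups are defined.

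Next, any isomorphism $L \cong L'$ of dualizing complexes induces a duality-preserving identification and hence an isomorphism $\W^i(X,L) \cong \W^i(X,L')$ for every $i$. Combining this with the standard identification $\W^{i+d}(X, f^!(K)[-d]) \cong \W^i(X, f^!K)$ (a special case of the shift compatibility of Witt groups of triangulated categories with duality, see \cite[Section 2]{Calmes08_pre}), one obtains
$$\W^{i+d}(X, \rel_f \LTens \Lf^*K) \;\cong\; \W^{i+d}(X, f^!(K)[-d]) \;\cong\; \W^i(X, f^!K),$$
and post-composing with the push-forward $f_*$ of Theorem \ref{PushForward_theo} yields the map in its restated form.

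For the ``in particular'' clauses: when $Y$ is Gorenstein, Example \ref{examplesdual}(2) shows that $\cO_Y$ is dualizing, hence any dualizing complex $K$ on $Y$ is locally a shifted line bundle and in particular perfect; thus $\ssp_{\cO_Y,K}$ is an isomorphism by Lemma \ref{sspIso_lemm}, and Proposition \ref{properDualizing_prop} supplies the dualizing property of $f^!K$. When $f$ is quasi-perfect (in particular of finite tor-dimension), the required property of $\ssp_{\cO_Y,K}$ is a standard Grothendieck-duality fact (see \cite{Lipman_pre}), and $f^!K$ being dualizing is part of the hypothesis. The main subtlety is not the identification of objects but verifying that the isomorphism built from $\ssp_{\cO_Y,K}$ and the unit is compatible with the bidual morphisms on both sides, so that the induced map on Witt groups genuinely agrees with a relabelling of $f_*$; this however is entirely formal within the symmetric monoidal closed framework of \cite{Calmes08_pre}, since the identifications all arise from the monoidal structure and the $(Rf_*, f^!)$ adjunction.
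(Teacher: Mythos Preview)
Your proof is correct and follows essentially the same approach as the paper: the paper delegates the reformulation to \cite[Definitions 6.1.3 and 6.1.4]{Calmes08_pre}, which is precisely the construction you spell out (identify $f^!K$ with $\rel_f \LTens \Lf^*K$ up to shift via $\ssp_{\cO_Y,K}$ and the unit, then use the induced isomorphism on Witt groups). For the quasi-perfect case the paper cites \cite[Proposition 2.1 and Example 2.2]{Lipman07} rather than \cite{Lipman_pre}, but otherwise your treatment of the two special cases matches the paper's.
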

\begin{proof}
The reformulation of the push-forward is \cite[Definitions 6.1.3 and 6.1.4]{Calmes09}. When $f$ is quasi-perfect, \cite[Proposition 2.1]{Lipman07} shows that $\ssp$ is an isomorphism on all $D_{qc}(Y)$. Example 2.2 in \loccit\ shows that if $f$ is of finite tor-dimension, it is quasi-perfect. When $Y$ is Gorenstein, the only dualizing complexes are shifted line bundles, for which $\ssp$ is an isomorphism by Lemma \ref{sspIso_lemm}. 
\end{proof}

Let $g:Y \to Z$ be another proper morphisms in $\Sch$
and $M$ a dualizing complex on $Z$ 
and let 
$$ \iota_{f,g}: \rel_{f} \LTens \Lf^*(\rel_g \LTens \LL g^*M) \to \rel_{gf} \LTens \LL (gf)^* M$$
be the morphism defined in \cite[Theorem 6.1.5]{Calmes09}.
\begin{theo}
The push-forward of Theorem \ref{PushForward2_theo} respects composition: the morphism $\iota_{f,g}$ is an isomorphism and if $I$ denotes the isomorphism of Witt groups induced by $\iota_{f,g}$, then the push-forward on Witt groups defined above satisfies that $g_* f_* = (gf)_* I$.
\end{theo}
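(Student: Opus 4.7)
The plan is to reduce this composition law for the reformulated push-forward to the analogous law for the unreformulated push-forward (Theorem \ref{compoPushForward_theo}) via the $\ssp$ identifications furnished by Theorem \ref{PushForward2_theo}. First I would apply Theorem \ref{compoPushForward_theo} to $f$ and $g$ with dualizing complex $M$; note that $g^!M$ is automatically dualizing on $Y$ by Proposition \ref{properDualizing_prop}, so the hypotheses of Theorem \ref{PushForward_theo} are satisfied for each of $f$, $g$ and $gf$. This produces the equality $g_* \circ f_* = (gf)_* \circ C_{f,g}$ on $\W^i(X, f^! g^! M)$, where $C_{f,g}$ is induced by the canonical isomorphism $f^! g^! \to (gf)^!$.

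Next I would invoke Theorem \ref{PushForward2_theo} three times, for $f$ with dualizing complex $g^! M$, for $g$ with $M$, and for $gf$ with $M$, to obtain the $\ssp$ isomorphisms
\[
\rel_f \LTens \Lf^*(g^! M) \cong f^! g^! M, \qquad \rel_g \LTens \LL g^* M \cong g^! M, \qquad \rel_{gf} \LTens \LL(gf)^* M \cong (gf)^! M.
\]
Combined with the shift $T^{-n}$ hidden in the definition of $\rel_f$ (and similarly for $g$, $gf$), these translate the reformulated push-forwards into the originals. Substituting into the formula from the previous step rewrites $g_* \circ f_* = (gf)_* \circ C_{f,g}$ as an equation between reformulated push-forwards in which the isomorphism of source Witt groups is induced by the composite
\[
\rel_f \LTens \Lf^*(\rel_g \LTens \LL g^* M) \xrightarrow{\mathrm{id} \LTens \Lf^*(\ssp)} \rel_f \LTens \Lf^*(g^! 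M) \xrightarrow{\ssp} f^! g^! M \xrightarrow{\sim} (gf)^! M.
\]

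The crux is then to check that this composite agrees, up to the analogous $\ssp$ identification on the target, with $\iota_{f,g}$. This is precisely the abstract compatibility established in \cite[Theorem 6.1.5]{Calmes08_pre}, where in an arbitrary symmetric monoidal closed setting equipped with an adjoint triple $(\LL(-)^*, \RR(-)_*, (-)^!)$ the morphism $\iota_{f,g}$ is \emph{defined} so as to correspond through the $\ssp$ identifications to the canonical isomorphism $f^! g^! \cong (gf)^!$. Applying this abstract statement to the closed symmetric monoidal structure on $D_{qc}$ of Theorem \ref{ClosedMonStructures_theo} together with the functors of Section \ref{functors_sec} then yields the theorem. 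The main obstacle is purely bookkeeping, namely keeping track of the shifts by the relative dimensions $n$, $m$, $n+m$ and verifying that each instance of $\ssp$ invoked is an isomorphism under the standing hypotheses; no new geometric input beyond what is already in Sections \ref{functors_sec} and \ref{pushDefi_sec} is required.
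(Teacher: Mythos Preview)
Your proposal is correct and follows essentially the same route as the paper: both ultimately rest on \cite[Theorem 6.1.5]{Calmes08_pre}. The paper's proof is terser, simply invoking that theorem together with \cite[Lemma 2.2.6 (2)]{Calmes08_pre} (the general fact that an isomorphism of dualizing objects such as $\iota_{f,g}$ induces a compatible isomorphism of Witt groups) and observing that $\iota_{f,g}$ is an isomorphism because it is built from the $\ssp$ isomorphisms assumed in Theorem~\ref{PushForward2_theo}; your version unpacks the content of the cited theorem by routing explicitly through Theorem~\ref{compoPushForward_theo} and the three $\ssp$ identifications.
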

\begin{proof}
This follows from \cite[Theorem 6.1.5, Lemma 2.2.6 (2)]{Calmes09}. 
Note that $\iota_{f,g}$ is an isomorphism because it is a composition of isomorphisms under the assumptions for the reformulated push-forward to exist.
\end{proof}

\begin{theo}
In the situation of Theorem \ref{baseChange_theo} and under the assumptions of the reformulation of the push-forward above for the morphisms $g$ and $\bar{g}$, the base change theorem \ref{baseChange_theo} becomes
$$f^* g_* = \bar{g}_* I \bar{f}_*$$
where $I$ is the isomorphism of Witt groups induced by the isomorphism 
$$\iota: \LL \bar{f}^* (\rel_g \LTens \LL g^* K) \to \rel_{\bar{g}} \LTens \LL \bar{g}^* \Lf^* K.$$
\end{theo}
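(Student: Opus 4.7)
The plan is to deduce this reformulated base-change identity from the original base-change Theorem \ref{baseChange_theo} by inserting the reformulation isomorphisms of Theorem \ref{PushForward2_theo} on the two push-forwards that appear. Concretely, the original base change reads $f^* g_* = \bar g_* \circ \Gamma \circ \bar f^*$ where $\Gamma$ is the isomorphism of Witt groups induced by $\gam_K : \LL \bar f^* g^! K \to \bar g^! \Lf^* K$, while the reformulation of $g_*$ (resp.\ $\bar g_*$) replaces the source $\W^i(X, g^!K)$ (resp.\ $\W^i(V, \bar g^! \Lf^*K)$) by the Witt group of $\rel_g \LTens \LL g^* K$ (resp.\ $\rel_{\bar g} \LTens \LL \bar g^* \Lf^* K$) via the isomorphism $\ssp_{\cO_Z,K}$ (resp.\ $\ssp_{\cO_X, \Lf^*K}$). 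So the whole proof reduces to identifying the composite
\[
\LL \bar f^*(\rel_g \LTens \LL g^* K) \xrightarrow{\LL \bar f^* \ssp} \LL \bar f^* g^! K \xrightarrow{\gam_K} \bar g^! \Lf^* K \xleftarrow{\ssp} \rel_{\bar g} \LTens \LL \bar g^* \Lf^* K
\]
with the morphism $\iota$ of the statement, up to the harmless identification $\LL \bar f^* \LL g^* \cong \LL \bar g^* \Lf^*$ arising from the associativity of pull-backs applied to the cartesian square.

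The key ingredient is therefore the abstract compatibility between $\ssp$, $\gam$ and the monoidal structure of the pull-backs, which is exactly the content of \cite[Section 5.2 and 6]{Calmes08_pre}: the morphism $\iota$ appearing in the statement is constructed there as the canonical composition of (i) the monoidal structure isomorphism $\fp$ for $\LL \bar f^*$, (ii) the isomorphism $\gam_{\cO_Z}$ (which, restricted to $\cO_Z$ and shifted by $-d$, identifies $\LL \bar f^* \rel_g$ with $\rel_{\bar g}$), and (iii) the canonical isomorphism $\LL \bar f^* \LL g^* K \cong \LL \bar g^* \Lf^* K$. Thus $\iota$ is an isomorphism because each of its three constituents is.

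Next I would verify, purely at the level of monoidal closed categories, that the square relating $\iota$ with $\gam_K \circ (\text{id} \LTens \ssp)$ and $\ssp \circ (\text{id} \LTens \text{id})$ commutes; this is exactly the diagrammatic lemma in \cite[Section 6]{Calmes08_pre} that expresses the naturality of $\ssp$ with respect to base change. Composing the resulting commutative square with the base-change identity $f^* g_* = \bar g_* \Gamma \bar f^*$ of Theorem \ref{baseChange_theo}, and using Lemma \cite[2.2.6 (2)]{Calmes08_pre} to pass the duality-preserving isomorphisms $\ssp$ and $\iota$ through to the level of Witt groups, yields the stated equality $f^* g_* = \bar g_* I \bar f^*$.

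The main (essentially only) obstacle is the diagrammatic verification of the compatibility between $\ssp$, $\gam$ and $\fp$ needed in the second step; this is a routine but tedious unravelling of the canonical morphisms defined through adjunctions in the symmetric monoidal closed setting, and it is precisely the kind of computation that has been carried out in the companion paper \cite{Calmes08_pre}. Once this compatibility is invoked, the rest of the argument is formal: the theorem follows by combining Theorem \ref{baseChange_theo} with Theorem \ref{PushForward2_theo} applied to both $g$ and $\bar g$.
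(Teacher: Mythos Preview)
Your proposal is correct and follows essentially the same approach as the paper: both reduce the reformulated base-change identity to the abstract compatibility between $\ssp$, $\gam$ and the monoidal structure established in the companion paper \cite{Calmes08_pre} (specifically Theorem 6.1.7 there), together with the observation that $\gam_{\cO_Z}$ is an isomorphism. The paper's proof is simply the one-line citation of that result, while you have unpacked the strategy behind it.
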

\begin{proof}
This follows from \cite[Theorem 6.1.7]{Calmes09}. Note that $\gam_{\cO_Z}$ is an isomorphism (see before Theorem \ref{baseChange_theo}).
\end{proof}

In the regular case, the projection formula \ref{ProjFormula_theo} becomes the following.
\begin{theo} \label{ProjFormula2_theo}
Let $f:X \to Y$ be a proper equidimensional morphism of relative dimension $d$ with $X,Y \in \Reg$. Then the push-forward of Theorem \ref{PushForward2_theo} and the pull-back of Theorem \ref{PullBack_theo} satisfy $f_* I (x.f^*(y))=f_*(x).y$ in $\W^{i+j}(Y,L \otimes K)$ for any $x \in \W^{i+d}(X,\rel_f \otimes f^* L)$ and $y\in \W^j(Y,K)$.
\end{theo}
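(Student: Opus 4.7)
The plan is to reduce Theorem \ref{ProjFormula2_theo} to Theorem \ref{ProjFormula_theo} via the relative dualizing sheaf reformulation, mirroring how Theorem \ref{PushForward2_theo} was obtained from Theorem \ref{PushForward_theo}. Since $X, Y \in \Reg$, Example \ref{examplesdual}(2) tells us every dualizing complex on $Y$ is a shifted line bundle, so $K$, $L$, and $L \otimes K$ are all shifted line bundles; Lemma \ref{sspIso_lemm} then ensures that all the $\ssp$-morphisms appearing below are isomorphisms, and the reformulated push-forward of Theorem \ref{PushForward2_theo} is available for both $L$ and $L \otimes K$, yielding canonical Witt group isomorphisms
$$\W^{i+d}(X, \rel_f \otimes f^*L) \cong \W^i(X, f^!L), \qquad \W^{i+d}(X, \rel_f \otimes f^*(L \otimes K)) \cong \W^i(X, f^!(L \otimes K)).$$

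Next I would transport $x$ to an element $x' \in \W^i(X, f^!L)$ along the first isomorphism and apply Theorem \ref{ProjFormula_theo} to the pair $(x', y)$ (with the roles of its $K$ and $L$ exchanged); this yields $f_*(I'(x' \cdot f^*y)) = f_*(x') \cdot y$ in $\W^{i+j}(Y, L \otimes K)$, where $I'$ is the Witt group isomorphism induced by $\ssp_{L, K}: f^!L \otimes f^*K \to f^!(L \otimes K)$.

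It then remains to identify, under the reformulation isomorphisms above, $I'$ with the isomorphism $I$ stated in Theorem \ref{ProjFormula2_theo}. Concretely, this amounts to checking commutativity of the diagram
$$\xymatrix{
\rel_f \otimes f^*L \otimes f^*K \ar[r]^-{\ssp_{\cO_Y, L} \otimes \mathrm{id}} \ar[d]_-{\mathrm{id} \otimes \fp} & f^!L \otimes f^*K \ar[d]^-{\ssp_{L, K}} \\
\rel_f \otimes f^*(L \otimes K) \ar[r]^-{\ssp_{\cO_Y, L \otimes K}} & f^!(L \otimes K)
}$$
where $\fp$ is the monoidal structure isomorphism of $f^*$. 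This is a coherence property of $\ssp$ in the symmetric monoidal closed framework, formally analogous to the compatibility underlying the construction of $\iota_{f,g}$ in \cite[Theorem 6.1.5]{Calmes08_pre}. I expect the paper to dispose of this step by citing a corresponding abstract projection formula in \cite[Section 6]{Calmes08_pre}.

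The main obstacle is precisely this final coherence diagram. However, since $\ssp$ is constructed purely from the symmetric monoidal closed structure and the adjunctions via the projection morphism (\cite[Proposition 4.3.1]{Calmes08_pre}), the required commutativity reduces to a formal diagram chase valid in any closed symmetric monoidal category and requires no further geometric input.
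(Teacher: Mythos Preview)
Your proposal is correct and takes essentially the same approach as the paper: the paper's proof consists of a direct citation to \cite[Theorem 6.1.9 and Corollary 6.1.10]{Calmes08_pre}, which is precisely the abstract reformulated projection formula in Section~6 that you anticipated. Your write-up simply unpacks what lies behind that citation---the reduction to Theorem~\ref{ProjFormula_theo} via the $\ssp$-isomorphisms and the coherence square for $\ssp$---and your expectation in the final paragraph is exactly what the paper does.
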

\begin{proof}
See \cite[Theorem 6.1.9 and Corollary 6.1.10]{Calmes09}.
\end{proof}

\subsection{Smooth schemes over a base}

We now fix a base scheme $S\in \Sch$ with a dualizing complex $K_S$ and consider the category $\SmPr/S$ of schemes in $\Sch$ that are smooth, equidimensional and proper over $S$. For such a scheme $X$, let the structural morphism be denoted by $p_X: X \to S$ and its relative dimension over $S$ by $d_X$. Note that any separated morphism between schemes in $\SmPr/S$ is proper, being the composition of a closed embedding, its graph, and a proper projection.

\begin{nota}
Let $X\in \SmPr/S$. We set $\can_X=p_X^!(K_S)[-d_X]$.
Observe that $\can_X = \can_{p_X}$ if $K_S=\cO_S$.
\end{nota}

\begin{theo} \label{PushForward3_theo}
Let $f: X \to Y$ be a separated morphism, $X,Y \in \SmPr/S$ and let $L$ be a line bundle on $Y$. The push-forward can be written
$$f_*: \W^{i+d_X} (X,\can_X \otimes f^* L) \to \W^{i+d_Y}(Y, \can_Y \otimes L)$$
\end{theo}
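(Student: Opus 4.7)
The plan is to invoke Theorem \ref{PushForward_theo} with the dualizing complex $K := \can_Y \otimes L$ on $Y$, and then to rewrite the source $\W^i(X,f^!K)$ explicitly. First I would check the hypotheses of Theorem \ref{PushForward_theo}. The morphism $f$ is proper: as noted just before the statement of the theorem, any separated morphism between objects of $\SmPr/S$ factors through its graph (a closed immersion into $X\times_S Y$) and the proper projection. Since $p_Y$ is proper and $K_S$ is dualizing, Proposition \ref{properDualizing_prop} gives that $p_Y^!(K_S)$, and hence its shift $\can_Y = p_Y^!(K_S)[-d_Y]$, is dualizing; tensoring by the line bundle $L$ preserves this by Example \ref{examplesdual}, so $K = \can_Y \otimes L$ is dualizing. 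Applying Proposition \ref{properDualizing_prop} once more to the proper $f$ shows that $f^!K$ is dualizing as well.

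Next I would identify $f^!K$ up to shift with $\can_X \otimes f^*L$. Since $L$ is a line bundle, it is perfect, so Lemma \ref{sspIso_lemm} gives a canonical isomorphism
$$\ssp_{\can_Y,L}: f^!(\can_Y)\LTens \Lf^*L \stackrel{\sim}{\longrightarrow} f^!(\can_Y \otimes L) = f^!K.$$
Pulling out the shift and using the composition isomorphism $f^!\circ p_Y^! \simeq (p_Y f)^! = p_X^!$ of the previous section, one computes
$$f^!(\can_Y) = f^!\bigl(p_Y^!(K_S)\bigr)[-d_Y] \simeq p_X^!(K_S)[-d_Y] = \can_X[d_X - d_Y] = \can_X[d_f],$$
where $d_f := d_X - d_Y$ is the relative dimension of $f$. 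Combining, $f^!K \simeq (\can_X \otimes f^*L)[d_f]$.

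Finally, Theorem \ref{PushForward_theo} yields $f_*: \W^m(X, f^!K) \to \W^m(Y,K)$ for all $m$; under the standard shift convention for Witt groups of a triangulated category with duality, $\W^m(X, M[n]) = \W^{m+n}(X, M)$, so the source rewrites as $\W^{m+d_f}(X, \can_X\otimes f^*L)$. Substituting $m = i + d_Y$, so that $m + d_f = i + d_X$, gives the claimed form
$$f_*: \W^{i+d_X}(X, \can_X \otimes f^*L) \to \W^{i+d_Y}(Y, \can_Y \otimes L).$$

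The only delicate point is keeping track of the shift conventions: one must verify that the isomorphism $f^!K \simeq (\can_X \otimes f^*L)[d_f]$ is an isomorphism of dualizing complexes (in the duality-preserving sense) so that the resulting reindexing of Witt groups is compatible with the one induced by Lemma 2.2.6 of \cite{Calmes08_pre}. Since $\ssp$ and the composition isomorphism for $f^!$ are natural and all shifts come from shifting the dualizing object, this is essentially bookkeeping, but it is where a careful check is required.
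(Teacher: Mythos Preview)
Your argument is correct and follows essentially the same route as the paper: reduce to the basic push-forward of Theorem \ref{PushForward_theo} with $K=\can_Y\otimes L$, and then identify $f^!K$ with $\can_X\otimes f^*L$ (up to the shift by $d_f$) using that $\ssp$ is an isomorphism on line bundles (Lemma \ref{sspIso_lemm}) together with the composition isomorphism $f^!p_Y^!\simeq p_X^!$. The paper's proof is terser because it packages the shift bookkeeping and the duality-preserving compatibility into \cite[Definitions 6.3.3 and 6.3.4]{Calmes08_pre}, but the substantive content---verifying that $f^!\can_Y\otimes f^*L\to f^!(\can_Y\otimes L)$ is an isomorphism via Lemma \ref{sspIso_lemm}---is the same step you carry out.
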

\begin{proof}
First, let us note that when pulling back or tensoring by a line bundle, there is nothing to derive. This is why no $\LL$ appear in front of $f^*$ and $\otimes$. We then use Definitions \cite[Definitions 6.3.3 and 6.3.4]{Calmes09}. We need to check that the morphism
$$\can_X \otimes L \simeq f^! \can_Y \otimes f^* L \to f^!(\can_Y \otimes L)$$
is an isomorphism. This is the case by Lemma \ref{sspIso_lemm}. 
\end{proof}

\begin{theo} 
The push-forward of Theorem \ref{PushForward3_theo} respects composition.
\end{theo}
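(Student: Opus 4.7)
The plan is to reduce this to the general composition Theorem \ref{compoPushForward_theo} together with the canonical identifications that underpin the reformulation of Theorem \ref{PushForward3_theo}. The strategy parallels the proof of the composition theorem for the relative dualizing sheaf version stated just after Theorem \ref{PushForward2_theo}: apply the abstract composition result, then check that the interpolating canonical isomorphism reduces to the obvious one. I expect this to be a direct application of the corresponding theorem in the smooth-over-a-base framework of \cite[Section 6.3]{Calmes08_pre}.

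Concretely, fix composable morphisms $f: X \to Y$ and $g: Y \to Z$ in $\SmPr/S$ and a line bundle $M$ on $Z$. Setting $K = \can_Z \otimes M$ as a dualizing complex on $Z$ and applying Theorem \ref{compoPushForward_theo}, we obtain $g_* \circ f_* = (gf)_*$ on $\W^j(-, (gf)^! K) = \W^j(-, f^! g^! K)$ up to the canonical associativity isomorphism. The equality $p_X = p_Z \circ g \circ f$ together with associativity of upper-shriek yields canonical isomorphisms
$$f^! \can_Y \cong \can_X [d_X - d_Y], \quad g^! \can_Z \cong \can_Y [d_Y - d_Z], \quad (gf)^! \can_Z \cong \can_X [d_X - d_Z]$$
fitting into a coherent diagram. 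Combined with $\ssp$, which is an isomorphism on line bundles by Lemma \ref{sspIso_lemm}, and with the natural identification $(gf)^* M \cong f^* g^* M$, these give matching rewrites of both $f^! g^! K$ and $(gf)^! K$ as $\can_X [d_X - d_Z] \otimes (gf)^* M$. After re-indexing via the shift convention $\W^n(-, D[k]) = \W^{n+k}(-, D)$, the abstract equality $g_* \circ f_* = (gf)_*$ becomes the desired equality of maps between the reformulated Witt groups of Theorem \ref{PushForward3_theo}.

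The main technical obstacle is the coherence check: one must verify that the composite canonical iso $f^!(g^! \can_Z \otimes g^* M) \cong f^!(\can_Y[d_Y - d_Z]) \otimes f^* g^* M \cong \can_X [d_X - d_Z] \otimes (gf)^* M$ agrees with the direct one $f^! g^!(\can_Z \otimes M) \cong (gf)^!(\can_Z \otimes M) \cong \can_X [d_X - d_Z] \otimes (gf)^* M$. This amounts to the naturality of $\ssp$ with respect to composition and its compatibility with associativity of upper-shriek, both of which are encoded in the abstract framework of \cite[Section 6.1 and Section 6.3]{Calmes08_pre}; compare the analogous coherence verification performed for the relative dualizing sheaf reformulation via the morphism $\iota_{f,g}$ of \cite[Theorem 6.1.5]{Calmes08_pre}. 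Once this coherence is in hand, the result follows by reading off the statement.
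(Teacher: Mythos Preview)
Your proposal is correct and follows essentially the same approach as the paper: the paper's proof simply cites \cite[Theorem 6.3.5 and Corollary 6.3.6]{Calmes08_pre}, which is precisely the smooth-over-a-base composition result you anticipated invoking. Your sketch of the coherence check involving $\ssp$ and associativity of upper-shriek is exactly what is packaged in those cited results, so you have in effect unfolded the paper's one-line citation.
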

\begin{proof}
See \cite[Theorem 6.3.5 and Corollary 6.3.6]{Calmes09}.
\end{proof}

\begin{theo}
The push-forward of Theorem \ref{PushForward3_theo} satisfies flat base change.
\end{theo}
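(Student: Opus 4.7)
The plan is to derive this from the general flat base change, Theorem \ref{baseChange_theo} (applied to $D_{qc}(-)$), by unwinding the reformulations of Theorem \ref{PushForward3_theo}. Consider a cartesian square
$$\xymatrix{V \ar[r]^{\bar{g}} \ar[d]_{\bar{f}} & Y \ar[d]^f \\ X \ar[r]_g & Z}$$
with all four corners in $\SmPr/S$ and $f$ flat. First I would verify that the hypotheses of Theorem \ref{baseChange_theo} hold: flatness of $f$ gives tor-independence (Proposition \ref{torIndep_prop}) and ensures $f$ and $\bar{f}$ are of finite tor-dimension, so the pull-backs $f^*$ and $\bar{f}^*$ are defined; every morphism in $\SmPr/S$ is proper, so the push-forwards $g_*$ and $\bar{g}_*$ are defined; and for any line bundle $M$ on $Z$ the complex $K := \can_Z \otimes M$ is dualizing, being a shift of a line bundle on the Gorenstein scheme $Z$, so that $g^!K$ is also dualizing by Proposition \ref{properDualizing_prop}.

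Next I would translate the groups of Theorem \ref{PushForward3_theo} into the form used by Theorem \ref{baseChange_theo}. The identification $g^!K \cong (\can_X \otimes g^*M)[d_X - d_Z]$ follows from the composition isomorphism $p_X^! \cong g^! p_Z^!$ applied to $K_S$, combined with Lemma \ref{sspIso_lemm} applied to the line bundle $M$; analogous identifications hold for $\bar{g}^!\Lf^*K$ via $p_V^! \cong \bar{g}^!f^!$, and for the pull-back side using that smoothness of $f$ lets us replace $\Lf^*$ by $f^*$ on shifted line bundles. The Witt-group index shifts match automatically after accounting for the additivity $d_X - d_Z = d_V - d_Y$ of relative dimensions in a cartesian square of smooth morphisms.

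The main point, which I expect to be the principal obstacle, is to check that under these identifications the base change isomorphism $\gam_K:\LL\bar{f}^*g^!K \to \bar{g}^!\Lf^*K$ of Theorem \ref{baseChange_theo} corresponds to the canonical underived isomorphism $\bar{f}^*(\can_X \otimes g^*M) \cong \can_V \otimes \bar{g}^*f^*M$ that is expected in the smooth setting. This is a formal compatibility between $\gam$, the composition isomorphisms for $(-)^!$ and $\LL(-)^*$, and the morphism $\iota$ of Section \ref{relDual_sec} (an isomorphism here by Lemma \ref{sspIso_lemm} and smoothness); it reduces to the corresponding abstract base change statement in \cite[Section 6.3]{Calmes08_pre}, established there by diagram chasing in the closed monoidal framework. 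Granting that compatibility, the desired flat base change for Theorem \ref{PushForward3_theo} is obtained by substituting everything back into the formula $f^* g_* = \bar{g}_* I \bar{f}^*$ of Theorem \ref{baseChange_theo}.
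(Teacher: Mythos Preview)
Your proposal is correct and follows essentially the same route as the paper: the paper's proof is simply a direct citation of \cite[Theorem 6.3.7 and Corollary 6.3.8]{Calmes08_pre}, which is precisely the abstract base change statement in the closed monoidal framework of Section 6.3 that you identify as the key ingredient. Your write-up unpacks in more detail the identifications and compatibilities that those results encapsulate, but the substance is the same.
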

\begin{proof}
See \cite[Theorem 6.3.7 and Corollary 6.3.8]{Calmes09}.
\end{proof}

\section{Examples} \label{examples_sec}

Note that $f^!$ is unique up to
unique isomorphism whenever it is defined,
because it is always defined as a right adjoint
to $\Rf_*$. This allows us to use
computations of $f^!$ from 
\cite{Hartshorne66} and other sources
in the examples below.

\subsection{Finite field extensions}

The simplest example of a proper morphism is the case of a finite field extension $E/F$ giving rise to a finite morphism 
$$f:X=\Spec(E) \to \Spec(F)=Y.$$ 
The tilde construction gives equivalences of categories
$\Mod(F)\simeq\Qcoh(Y)$ and $\Mod(E)\simeq \Qcoh(X)$, and the subcategories of
finite dimensional vector spaces correspond to coherent sheaves of modules. We
thus describe all objects and functors through these equivalences of
categories. The only dualizing complex (up to shifts and isomorphisms) on $Y$
is $F$ itself. The functors $f^*=(-\otimes_F E)$ and $f_*= (-)|_F$ are exact,
there is nothing to derive. The functor $f^!$ is given 
by \cite[III §6]{Hartshorne66} as $\SHom_F(E,-)$ (mapping to $E$-vector spaces) and the unit and counit of the adjunction $(f_*,f^!)$ are respectively given by
$$\left\{\begin{array}{ccc} V & \to & \SHom_F(E, V|_F) \\ a & \mapsto & (e \mapsto e.a) \end{array}\right. \hspace{5ex} \left\{\begin{array}{ccc} \SHom_F(E,V')|_F & \to & V' \\ \phi & \mapsto & \phi(1) \end{array}\right.$$
for an $E$-vector space $V$ and an $F$-vector space $V'$.
For fields, the only nonzero Witt group modulo 4 is $\W^0$  
which is the classical Witt group of the field. 
So we are reduced to study push-forward for forms on vector spaces, \ie\ complexes concentrated in degree zero. 
Following the construction, it is easy to check that for any $E$-vector space $V$, the morphism $\rr: f_* \HHom{V,f^! F} \to \HHom{f_*V, F}$ coincides with the Cartan isomorphism of $F$-vector spaces
$$\SHom_E(V,\SHom_F(E,F))|_F \simeq \SHom_F(V|_F , F)$$
which sends a morphism $\phi: V \to \SHom_F(E,F)$ to the morphism $(a \mapsto \phi(a)(1))$. 
Thus, the push-forward $f_*: \W^0(E, \SHom_F(E,F)) \to \W^0(F)$ is a Scharlau
transfer (see \cite[p. 48]{Scharlau85}) with respect to the usual trace 
$Tr:E \to F$. To see this, note that
$Tr$ factors as $E \stackrel{\simeq}{\to} \SHom_F(E,F) \to F$ 
where the isomorphism is given by $e \mapsto (x \mapsto Tr(e.x))$
and $\SHom_F(E,F) \to F$ is the evaluation at $1$.

\subsection{Regular embeddings} \label{regClosedEmb_sec}

Let $F$ be a vector bundle of rank $d>0$ over $X$ with a regular section 
$s: \cO_X \to F$, \ie\ such that the corresponding
embedding $f:Z\subset X$ of the zero locus 
is a closed regular embedding of
codimension $d$. In that case the augmented Koszul resolution 
is exact \cite[IV, §2, Proposition 3.1]{Fulton85} 
and thus yields a quasi-isomorphism 
\begin{equation} \label{qis_eq}
\xymatrix@C=4ex{
\Kos_F \ar[d]_{\qis} \ar@{}[r]|{=} & (\; 0 \ar[r] & \Lambda^d F^\vee \ar[r] & \Lambda^{d-1} F^\vee \ar[r] & \cdots \ar[r] & F^\vee \ar[r] & \cO_X \ar[d] \ar[r] & 0 \; ) \\
f_* \cO_Z \ar@{}[r]|{=} & (\; & & & & 0 \ar[r] & f_* \cO_Z \ar[r] & 0 \;)
}
\end{equation}
from the Koszul complex $\Kos_F$ to $f_* \cO_Z$ concentrated in degree $0$.
Since $f$ is a closed embedding, thus finite, $f_*$ is exact and
coincides with $\Rf_*$. 
Let $\Delta_F= \Lambda^d F$ be the determinant of $F$. 
In this situation, we have $f^! A = f^*\Delta_F [-d] \otimes \Lf^* A$
for all $A \in D_{b,c}(X)$;
this may be extracted from \cite[III §7]{Hartshorne66}, see also
\cite[Proposition 1]{Verdier69}, after applying Lemma
\ref{sspIso_lemm} and using that $F$ is dual
to the cotangent sheaf.
By tensoring the augmented Koszul resolution with $\Delta_F$ and using the canonical isomorphisms $\Lambda^i F^\vee \otimes \Delta_F \cong \Lambda^{d-i} F$ and $f_* \cO_Z \otimes \Delta_F \cong f_* f^* \Delta_F$, we obtain the trace map $f_* f^! \cO_X \to \cO_X$ (counit of the adjunction $(\Rf_*,f^!)$) in the derived category as the composition of a usual map of complexes followed by the inverse of a quasi-isomorphism ($\cO_X$ is in degree $0$ and $f_*f^*\Delta_F$ in degree $-d$):
$$\xymatrix{
f_*f^! \cO_X \ar[dd] \ar@{}[r]|{=} & (\; 0 \ar[r] & 0 \ar[r] & \cdots \ar[r] & 0 \ar[r] & f_*f^* \Delta_F \ar[d]^{id} \ar[r] & 0 \; ) \\
 & (\; 0 \ar[r] & F \ar[r] & \cdots \ar[r] & \Delta_F \ar[r] & f_* f^* \Delta_F \ar[r] & 0 \; ) \\
\cO_X & (\; 0 \ar[r] & \cO_X \ar[u]_{s} \ar[r] & 0 \ar[r] & \cdots \ar[r] & 0 \ar[r] & 0 \; )
}$$

Now assume $Z$ is Gorenstein. Then $\cO_Z$ is dualizing, and the 
isomorphism $\cO_Z \to \HHom{\cO_Z,\cO_Z}$ adjoint to $\cO_Z \LTens \cO_Z
\simeq \cO_Z$ defines a form on $\cO_Z$, denoted by $\one_Z$. On the other
hand, there is a well-known form $\theta_F: \Kos_F \to \SHom(\Kos_F,
\Delta_F^\vee [d])$ (see \cite[§4]{Balmer05c})
 given by the canonical isomorphism $\Lambda^i F^\vee \simeq
 (\Lambda^{d-i}F^\vee)^\vee \otimes \Lambda^d F^\vee$ in degree $i$, with a
 sign chosen so that when $F=\oplus L_i$ is a direct sum of line bundles, 
this form is the tensor product of forms $\theta_{L_i}$ on Koszul complexes of length one
\begin{equation} \label{coneForm_eq}
\xymatrix{
\Kos_{L_i} \ar[d]_{\theta_{L_i}} \ar@{}[rr]|{=} & (\; 0 \ar[r] & L_i^\vee \ar[d]_{-1} \ar[r]^{s_i^\vee} & \cO_X \ar[d]^{1} \ar[r] & 0\;) \\
\SHom(\Kos_{L_i}, L_i^\vee [1]) \ar@{}[rr]|{=} & (\; 0 \ar[r] & L_i^\vee \ar[r]^{-s_i^\vee} & \cO_X \ar[r] & 0\;) \\
}
\end{equation}
representing elements in $\W^1(X,L_i^\vee)$.
The following proposition can be considered as a concrete description of the push-forward of $\one_Z$ along $f$.

\begin{prop} \label{regularEmbed_prop}
Let $Z$ and $X$ be Gorenstein schemes and $f: Z \to X$ be a closed regular embedding
of codimension $d$ defined as the zero locus of a regular section of a vector bundle $F$ of rank $d$ whose determinant $\Lambda^d F$ is denoted by $\Delta_F$. Then the image of the form $\one_Z: \cO_Z \stackrel{\simeq}{\to} \HHom{\cO_Z,\cO_Z}$ (adjoint to $\cO_Z \LTens \cO_Z \simeq \cO_Z$) under the composition
$$\xymatrix{\W^0(Z,\cO_Z) \ar@{}[r]|{\simeq} & \W^d(Z,f^! \Delta_F^\vee) \ar[r]^{f_*} & \W^d (X, \Delta_F^\vee)}$$
is a form $\phi$ such that the following diagram in $D(X)$ commutes.
$$\xymatrix{
\Kos_F \ar[d]_{\theta} \ar[r]^{\qis}_{\simeq} & f_* \cO_Z \ar[d]^{\phi} \\
\SHom(\Kos_F,\Delta_F^\vee [d]) \ar[r]_{\qis}^{\simeq} & \HHom{f_* \cO_Z, \Delta_F^\vee [d]} }$$
\end{prop}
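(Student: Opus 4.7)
The plan is to translate both $\phi$ and $\theta_F$ into pairings with target $\Delta_F^\vee[d]$ via the tensor-Hom adjunction in $D(X)$, and then identify these pairings explicitly by unwinding the abstract construction of $\rr_K$. Since the target $\Delta_F^\vee[d]$ is common to both sides and no replacement by a quasi-isomorphic complex is needed there, the commutativity of the square in the proposition is equivalent to the equality of two morphisms $\Kos_F \otimes \Kos_F \to \Delta_F^\vee[d]$ in $D(X)$, namely the adjoints of $\phi$ (precomposed with the Koszul quasi-isomorphism \eqref{qis_eq} on both tensor factors) and of $\theta_F$.

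First I would use Lemma \ref{sspIso_lemm} to fix the identification $f^!(\Delta_F^\vee[d]) \simeq f^!\cO_X \otimes f^*\Delta_F^\vee[d] \simeq f^*\Delta_F[-d] \otimes f^*\Delta_F^\vee[d] \simeq \cO_Z$, under which the form $\one_Z$ corresponds tautologically to the canonical isomorphism $\cO_Z \to \HHom{\cO_Z, f^!\Delta_F^\vee[d]}$. By the construction of $\rr_K$ in \cite[Theorem 4.2.9]{Calmes08_pre} out of the projection morphism $\q$, the counit of $(\Lf^*, \Rf_*)$, internal evaluation, and the trace, the adjoint pairing $P_\phi$ associated to $\phi$ factors as
\[
f_*\cO_Z \otimes f_*\cO_Z \xrightarrow{\q} f_*(\cO_Z \otimes f^*f_*\cO_Z) \xrightarrow{\mathrm{id} \otimes \epsilon} f_*\cO_Z \xrightarrow{tr} \Delta_F^\vee[d],
\]
with implicit identifications $f_*(\cO_Z \otimes \cO_Z) \simeq f_*\cO_Z \simeq f_*f^!(\Delta_F^\vee[d])$, $\epsilon$ the counit of $(\Lf^*, \Rf_*)$ and $tr$ the trace (counit of $(\Rf_*, f^!)$).

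Next I would precompose with the Koszul quasi-isomorphism $\Kos_F \otimes \Kos_F \to f_*\cO_Z \otimes f_*\cO_Z$. The composite of $\q$ and $\mathrm{id} \otimes \epsilon$, read through the Koszul resolution, reproduces the Koszul multiplication induced by the wedge product $\Lambda^i F^\vee \otimes \Lambda^j F^\vee \to \Lambda^{i+j} F^\vee$, because the projection-formula-then-counit combination is precisely the map presenting $f_*\cO_Z$ as a commutative algebra object of $D(X)$ with $\Kos_F$ as its Koszul dg-algebra model. The trace $f_*f^!(\Delta_F^\vee[d]) \to \Delta_F^\vee[d]$, read via the same quasi-isomorphism and the explicit description of $tr: f_*f^!\cO_X \to \cO_X$ given in the paragraph preceding the proposition (twisted by $\Delta_F^\vee[d]$), is the projection of $\Kos_F$ onto its top component $\Lambda^d F^\vee = \Delta_F^\vee$ placed in degree $d$. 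Composing, $P_\phi$ becomes the natural pairing $\Lambda^i F^\vee \otimes \Lambda^{d-i} F^\vee \to \Delta_F^\vee$ in each bidegree, which is exactly the adjoint $P_\theta$ of $\theta_F$.

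The hard part will be sign-bookkeeping: the pairing built from $\q$, $\epsilon$, and $tr$ inherits signs from the Koszul sign rule and from the various structural isomorphisms, while $\theta_F$ is normalized as in \eqref{coneForm_eq} precisely so as to tensor correctly on split bundles. The cleanest route is to verify the identity locally in the case $F = \oplus_{i=1}^d L_i$ of a direct sum of line bundles: then $\Kos_F \simeq \Kos_{L_1} \otimes \cdots \otimes \Kos_{L_d}$, both $P_\phi$ and $P_\theta$ split accordingly, and the statement reduces to the length-one case which can be computed directly on the two-term complexes of \eqref{coneForm_eq}. Here one uses the compatibility of the construction of $\rr$ and of the trace with tensor products, which is a general feature of the monoidal-closed framework of \cite{Calmes08_pre}.
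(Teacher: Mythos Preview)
Your approach is essentially the one in the paper: the authors also factor through the Koszul multiplication $\delta_F:\Kos_F\otimes\Kos_F\to\Kos_F$ and the projection $\sigma_F:\Kos_F\to\Delta_F^\vee[d]$ onto the top piece, show that the adjoint of $\sigma_F\circ\delta_F$ equals $\theta_F$ (their Lemma~\ref{Kos_lemm}), unwind $\phi$ via $\rr$ into the analogous composition built from $\fg$, the monoidal unit, and the trace (their Lemma~\ref{f_*OZ_lemm}), and then match the two through the Koszul resolution (their Lemma~\ref{conclusion_lemm}), reducing the sign check to the split line-bundle case and finally to $d=1$. One point the paper handles explicitly that you gloss over: checking equality of morphisms in $D(X)$ on an open cover is \emph{not} automatic, so before ``verifying locally'' you must observe (as the paper does) that the relevant maps are represented by genuine morphisms of complexes, for which local agreement does suffice.
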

\begin{proof}
Let $\delta_F: \Kos_F \otimes \Kos_F \to \Kos_F$ in $D(X)$ be the composition
$$\xymatrix@C=6ex{
\Kos_F \otimes \Kos_F \ar[r]^-{\qis \otimes \qis} & f_* \cO_Z \LTens f_* \cO_Z \ar[r]^{\fg} & f_* (\cO_Z\LTens \cO_Z) \ar[r]^-{\simeq} & f_* \cO_Z \ar[r]^{\qis^{-1}} & \Kos_F 
}$$
where $\fg$ is the morphism from \cite[Proposition 4.2.1]{Calmes09}. Note
that $\delta_F$ is in fact represented by a morphism of complexes (not just a
fraction): one can check that the map from $\Kos_F \otimes \Kos_F$ to $\Kos_F$
in degree $i$ is a sum of the canonical morphisms $\Lambda^k F \otimes
\Lambda^{i-k} F \to \Lambda^i F$ with appropriate signs.
We also consider the map $\sigma_F: \Kos_F \to \Delta_F^\vee[d]$ given by
$$\xymatrix@C=4ex{
\Kos_F \ar[d]_{\sigma_F} \ar@{}[r]|{=} & (\; 0 \ar[r] & \Lambda^d F^\vee \ar@{=}[d] \ar[r] & \Lambda^{d-1} F^\vee \ar[r] & \cdots \ar[r] & F^\vee \ar[r] & \cO_X \ar[r] & 0 \; ) \\
\Delta_F^\vee[d] \ar@{}[r]|{=} & (\; 0 \ar[r] & \Delta_F^\vee \ar[r] & 0 & & & & \;\;\;)
}$$
The following three lemmas together clearly imply the proposition.
\begin{lemm} \label{Kos_lemm}
The morphism of complexes $x_F: \Kos_F \to \SHom(\Kos_F,\Delta_F^\vee[d])$ defined as the composition
$$\xymatrix{
\Kos_F \ar[r] & \SHom(\Kos_F, \Kos_F \otimes \Kos_F) \ar[r]^-{(\delta_F)_*} & \SHom(\Kos_F,\Kos_F) \ar[r]^-{(\sigma_F)_*} & \SHom(\Kos_F,\Delta_F^\vee[d])
}$$
coincides with the form $\theta$ where the first map is the unit of the
adjunction of the tensor product and the internal Hom in the homotopy category.
\end{lemm}
\begin{lemm} \label{f_*OZ_lemm}
The form $\phi: f_* \cO_Z \to \HHom{f_* \cO_Z, \Delta_F^\vee [d]}$ coincides with the composition
$$\xymatrix@R=3ex{
f_* \cO_Z \ar[r] & \HHom{f_* \cO_Z, f_* \cO_Z \LTens f_* \cO_Z} \ar[r]^{\fg} & \HHom{f_*\cO_Z,f_* (\cO_Z \LTens \cO_Z)} \ar[d]^-{\wr} \\
\HHom{f_* \cO_Z, \Delta_F^\vee[d]} & \HHom{f_* \cO_Z, f_*f^! \Delta_F^\vee[d]} \ar[l] & \HHom{f_* \cO_Z, f_* \cO_Z} \ar[l]_-{\simeq} 
}$$
where the first map is the unit of the adjunction of the tensor product and the internal Hom in $D(X)$, the penultimate one is the identification $\cO_Z \simeq f^! \Delta_F^\vee[d]$ and the last one is induced by the counit of the adjunction $(f_*,f^!)$, \ie\ the trace map described above. 
\end{lemm}
\begin{lemm} \label{conclusion_lemm}
The composition of Lemma \ref{Kos_lemm} coincides with the one of Lemma \ref{f_*OZ_lemm} when $f_* \cO_Z$ is identified with $\Kos_F$ using $\qis$.
\end{lemm}
\begin{proof}[Proof of Lemma \ref{Kos_lemm}]
As we are dealing with honest morphisms of complexes
we may first reduce to open subsets on which $F$ is a sum of line bundles
$L_i$ (note that two morphisms in $D(X)$ are not necessarily equal
if they are equal when restricted to all open sets of an affine covering,
see for example \cite{Balmer07}). 
We then reduce to the case of codimension $d=1$, by multiplicativity of Koszul complexes: let $d=d_1+d_2$ and let $F=F_1 \oplus F_2$ where $F_1$ (resp. $F_2$) is the sum of the first $d_1$ line bundles (resp. last $d_2$). Let $f_1:\cO_{Z_1} \to X$ (resp. $f_2: \cO_{Z_2} \to X$) be the corresponding regular subschemes.
Then $\Delta_F \simeq \Delta_{F_1} \otimes \Delta_{F_2}$ and $\Kos_F \simeq \Kos_{F_1} \otimes \Kos_{F_2}$. We leave it to the reader to show that the diagram
$$\xymatrix@C=1ex{
\Kos_{F_1}\otimes \Kos_{F_2} \ar[d]_{x_{F_1} \otimes x_{F_2}} \ar[r]^{\simeq} & \Kos_F \ar[r]^-{x_F} & \SHom(\Kos_F,\Delta_F^\vee[d]) \ar[d]^{\simeq} \\ 
\SHom(\Kos_{F_1},\Delta_1^\vee[d_1])\otimes \SHom(\Kos_{F_2},\Delta_2^\vee[d_2]) \ar[rr]^{\dd} & & \SHom(\Kos_{F_1} \otimes \Kos_{F_2}, \Delta_1^\vee[d_1] \otimes \Delta_2^\vee[d_2]) 
}$$
commutes, where $\dd$ is the morphism defined as in \cite[Definition
4.4.1]{Calmes09}, using the monoidal structure on the homotopy
category. By definition, $\theta_F$, $\theta_{F_1}$ and $\theta_{F_2}$
make the same diagram commutative when they replace $x_F$, $x_{F_1}$ and
$x_{F_2}$. Hence it suffices to show the lemma for one line bundle $L$ and 
its associated Koszul complex of length one, which can be checked by hand. 
\end{proof}
\begin{proof}[Proof of Lemma \ref{f_*OZ_lemm}]
By definition, the form $\phi$ is given by the composition 
$$\xymatrix@C=5ex{
f_* \cO_Z \ar[r]^-{f_* \one_Z} & f_* \HHom{\cO_Z,\cO_Z} \ar[r]^-{\simeq} & f_*\HHom{\cO_Z,f^! \Delta_F^\vee[d]} \ar[r]^-{\rr} & \HHom{f_* \cO_Z, \Delta_F^\vee [d]}.
}$$ 
One proves using the closed monoidal structure that it coincides with the composition
$$\xymatrix{
f_* \cO_Z \ar[r]^-{\simeq} & f_* f^! \Delta_F^\vee[d] \ar[r]^-{} & f_* \HHom{\cO_Z,f^! \Delta_F^\vee[d]} \ar[r]^-{\rr} & \HHom{f_* \cO_Z, \Delta_F^\vee [d]}
}$$
where the second map is adjoint to the unit morphism of the monoidal structure. Then, looking back at the definition of $\rr$ and $\ff$ in \cite[Proposition 4.2.2 and Theorem 4.2.9]{Calmes09}, one sees that $\phi$ is the composition around the lower left corner of the commutative diagram

{\tiny
$$
\xymatrix@C=0ex{
f_* \cO_Z \ar[r] \ar[d] & \HHom{f_* \cO_Z, f_*\cO_Z \LTens f_* \cO_Z} \ar[d] \ar[r] & \HHom{f_* \cO_Z, f_*(\cO_Z \LTens \cO_Z)} \ar[d] \ar[r] & \HHom{f_* \cO_Z, f_* \cO_Z} \ar[d] \\
f_* f^! \Delta_F^\vee [d] \ar[d] \ar[r] & \HHom{f_* \cO_Z, f_* f^! \Delta_F^\vee[d] \LTens f_* \cO_Z} \ar[d] \ar[r] & \HHom{f_* \cO_Z,f_*(f^! \Delta_F^\vee[d] \LTens \cO_Z)} \ar[d] \ar[r] & \HHom{f_* \cO_Z, f_*f^! \Delta_F^\vee[d]} \ar[d] \\
f_* \HHom{\cO_Z,f^!\Delta_F^\vee[d]} \ar@/^3ex/[r] & \HHom{f_* \cO_Z, f_*\HHom{\cO_Z,f^!\Delta_F^\vee[d]}\LTens f_* \cO_Z} \ar@/^3ex/[r] & \HHom{f_* \cO_Z, f_*(\HHom{\cO_Z,f^!\Delta_F^\vee[d]}\LTens \cO_Z)} \ar[ur] & \HHom{f_* \cO_Z, \Delta_F^\vee[d]} 
}
$$
}

which thus proves the lemma (all squares in this diagram are commutative by obvious functorial reasons, and the triangle by adjunction).
\end{proof}
\begin{proof}[Proof of Lemma \ref{conclusion_lemm}]
This follows from the computation of the resolution of $f_* \cO_Z$ by $\Kos_F$ when computing the derived functors $\HHom{f_*\cO_Z,-}$ and $- \LTens f_* \cO_Z$.
\end{proof}
This finishes the proof of Proposition \ref{regularEmbed_prop}.
\end{proof}
\begin{rema}
If $F=F'\oplus L_1$, with $L_1$ a line bundle, $s=(s',s_1)$, $s'$ and $s_1$ transverse, the push-forward of $\one_Z$ is zero: decompose the inclusion $Z \subset X$ as $Z \subset Z(s') \subset X$ where $Z(s')$ is the zero locus of $s'$. Push-forwards respect composition and the push-forward of $\one_Z$ along $Z \subset Z(s')$ is already zero since it is the form \eqref{coneForm_eq} which is the cone of a (degenerate) form $s:L_1^\vee \to \cO_X$. \\
On the other hand, an example when this push-forward is nonzero can be
extracted from \cite{Balmer07_pre}. Let $k$ be a field and let $Gr_k(2,4)$ be
the Grassmannian of $2$-planes in $ \bbA_k^4$. A nonzero map from $\bbA_k^4$
to $\bbA_k$ induces a section of the dual $W^\vee$ of the universal subbundle
$W\subset \bbA_k^4$ of rank $2$. Its zero locus is a copy of $\bbP^2$
regularly embedded in $Gr(2,4)$. The push-forward of the unit form of $\bbP^2$
to $Gr(2,4)$ is nonzero by \cite{Balmer07_pre} 
where it is proved that it is an element of a basis of the total Witt group of $Gr(2,4)$ as a $\W(k)$-module.  
\end{rema}

\subsection{Projective spaces} \label{projSpace_sec}

Let $Y\in \Sch$ be a Gorenstein scheme, let $\cE$ be a vector bundle of rank
$r+1$ on $Y$ and let us examine when the unit form on $X=\bbP_Y(\cE)$ can be
pushed forward to $Y$ along $f:X \to Y$. Since $f$ is smooth (thus flat), we can use section \ref{relDual_sec}. 
In the case of a smooth morphism $f$, the object $\can_f$ of \ref{relDual_sec} is a line bundle, and it is the maximal exterior power of the relative cotangent bundle (see \cite[Ch. VII §4]{Hartshorne66}). 
Here, since $f$ is a projective bundle, it is given by $\can_f= f^*(\Delta_\cE)^\vee \otimes \cO(-r-1)$ (see \eg\ \cite[Appendix B.5.8]{Fulton98}).
If $r+1$ is even, we can push-forward the unit form $\one_X:\cO_X \simeq \HHom{\cO_X,\cO_X}$ from $\W^0(X,\cO_X)$ by using the composition
$$\W^0(X,\cO_X) \simeq \W^0(X,\cO(-r-1)) = \W^0(X,\can_f \otimes f^*(\Delta_\cE)) \to \W^{-r}(Y,\Delta_\cE)$$ 
where the first isomorphism is given by tensoring with the canonical form
$\phi_r=[\cO(-(r+1)/2) \stackrel{\simeq}{\to} \cO((r+1)/2)\otimes
\cO(-(r+1)) \cong \SHom_{\cO}(\cO(-(r+1)/2),\cO(-(r+1))]$
and the last map is the push-forward in the form of Theorem
\ref{PushForward2_theo}. Computing the image of $\one_X$ through this
composition means therefore computing the image of $\phi_r$ by the
push-forward. The complex on which $f_*(\phi_r)$ lives is $\Rf_*
(\cO(-(r+1)/2))$. But this complex is zero by \cite[2.1.15]{EGA3-1},
so $f_*(\phi_r)=0$. If $r+1$ is odd, there is no push-forward
induced by $f$ with source $\W^0(X,\cO_X)$ because then
there is no line bundle $K$ on $Y$ such that $\cO_X$ is equal to $f^*(\Delta_\cE^\vee) \otimes \cO(-r-1) \otimes f^*(K)$ up to a square in $Pic(Y)$.
In other words, pushing forward the unit form of
$\bbP_Y(\cE)$ to $Y$ is not very interesting: whenever it is possible, we get
zero. Of course, there are other forms on $\bbP^r(\cE)$ not mapping to
$0$ under the push-forward, as we will see in the following remark. 

\begin{rema}
Let us explain a potential source of confusion.
Let $i:\Spec k \to \bbP^r_k$ be
a rational point and $L$ a line bundle on $\bbP^r_k$. 
Since $\Pic(\Spec k)=0$, using first an isomorphism $\cO_k \simeq \rel_i \otimes i^*(L)$, we can push-forward from $\W^0(\Spec k, \cO_k)$ to $\W^r(\bbP^r_k,L)$ for any $L$. But for different $L$, we get {\em very different} push-forwards. Indeed, for example $\W^r(\bbP^r_k,\cO(-r))=0$ for odd $r$ (by \cite{Walter03_pre} or \cite{Balmer05b}) so any push-forward to there is obviously zero, whereas since $\can_k \simeq \cO_k$ the push-forward (written as in Theorem \ref{PushForward3_theo})
$$\W^0(\Spec k, \cO_k) \simeq \W^0(\Spec k, \can_k) \to \W^r(\bbP^r_k,\can_{\bbP^r_k})$$
is certainly nonzero, because we can further compose it by a push-forward back to $\W^0(\Spec k, \can_k)$ and since the push-forward respects composition, the composite is the identity. Note that this last case also gives an example of a form on $\bbP^r_k$ whose push-forward to $\Spec k$ is not zero.
More generally, this phenomenon of different push-forwards starting from the same group can happen whenever $f^*:\Pic(Y) \to \Pic(X)$ is not injective.
\end{rema}

\appendix

\section{q-flat and q-injective resolutions}

For the convenience of the reader, we include here well-known facts on q-flat
or q-injective objects, that are repeatedly used in the proofs of this
article. Most of them are due to Spaltenstein.

\begin{defi}
Let $X$ be a scheme and $A$ be an object in the homotopy category $K(X)$. We say that $A$ is q-flat (or K-flat) if the triangulated functor $(- \TTens A): K(X) \to K(X)$ preserves quasi-isomorphisms. We say that $A$ is q-injective (or K-injective) if the triangulated functor $\SHom_\bullet(-,A):K(X)^o \to K(X)$ preserves quasi-isomorphisms.
\end{defi}

\begin{exam}
A bounded above complex of flat $\cO_X$-modules is q-flat. A bounded below complex of injectives is q-injective.
\end{exam}

A discussion of q-flat and q-injective complexes can be found in \cite[Sections 1 and 5]{Spaltenstein88}. See in particular Propositions 1.5 and 5.3.

\begin{prop} \label{qflatinj_prop}
Let $A$ and $B$ be objects in $K(X)$ or $K(Y)$ and let $f:X \to Y$ be a morphism of schemes.
\begin{enumerate}
\item If $A$ and $B$ are q-flat, then so is $A \TTens B$. 
\item If $A$ is q-flat and $B$ is q-injective, then $\SHom_\bullet (A,B)$ is q-injective.
\item If $A$ is q-flat, then $f^* A$ is q-flat.
\end{enumerate}
\end{prop}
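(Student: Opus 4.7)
The plan is to deduce each of the three points as a direct consequence of the definitions, combined with well-known functorial identifications at the level of $K(X)$. I do not expect any of the three parts to require a deep argument; the work is essentially bookkeeping with quasi-isomorphisms.

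For Point (1), I would invoke the associativity isomorphism of the tensor product in $K(X)$ to identify the endofunctor $(-\TTens(A\TTens B))$ with the composition $((-)\TTens A)\TTens B$. Both $(-\TTens A)$ and $(-\TTens B)$ preserve quasi-isomorphisms by the q-flatness of $A$ and $B$ respectively, hence so does their composition; this shows $A\TTens B$ is q-flat.

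For Point (2), I would use the tensor-hom adjunction at the level of the homotopy category, which yields a natural isomorphism
$$\SHom_\bullet\bigl(C,\SHom_\bullet(A,B)\bigr)\cong \SHom_\bullet(C\TTens A,B)$$
functorial in $C\in K(X)$. Given a quasi-isomorphism $C\to C'$, tensoring with $A$ preserves it by q-flatness of $A$, and then applying $\SHom_\bullet(-,B)$ preserves it by q-injectivity of $B$. Transporting back through the adjunction proves that $\SHom_\bullet(A,B)$ is q-injective.

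For Point (3), the strategy is to use the identification $C\TTens_{\cO_X} f^*A = C\TTens_{f^{-1}\cO_Y} f^{-1}A$ valid for any $C\in K(X)$ viewed as a complex of $f^{-1}\cO_Y$-modules via the structure map $f^{-1}\cO_Y\to \cO_X$. This reduces matters to showing that $f^{-1}A$ is q-flat over $f^{-1}\cO_Y$. Since acyclicity is a stalk-local property and $(f^{-1}A)_x = A_{f(x)}$ as an $\cO_{Y,f(x)}$-module complex, this follows once one knows that the stalks of a q-flat complex are themselves q-flat over the stalk rings. I expect this stalk preservation of q-flatness to be the only genuine obstacle, that is, the only point where a non-formal argument is needed; it can be obtained by testing against acyclic complexes of skyscraper type, or alternatively simply quoted from the corresponding statement in Spaltenstein's paper.
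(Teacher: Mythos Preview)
Your arguments are correct and are precisely the standard ones; the paper itself gives no argument at all, merely citing \cite[Prop.~5.3 and 5.4]{Spaltenstein88}. Your sketch for each point is essentially what one finds there, including the stalkwise reduction in Point~(3).
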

\begin{proof}
See \cite[Prop. 5.3 and 5.4]{Spaltenstein88}.
\end{proof}

The following two propositions summarize the equivalences of categories and
the properties of injectives that we need. Let $\Qcoh(X)$ denote the abelian
category of quasi-coherent sheaves on $X$ and
$\bVect{X}$ (resp. $\bCoh{X}$) the category of bounded complexes of locally free (resp. coherent) sheaves on $X$.
\begin{prop} \label{equivCat_prop}
Let $X \in \Sch$.
\begin{enumerate}
\item The natural functor $D(\Qcoh(X)) \to D_{qc}(X)$ is an equivalence of
  categories and thus the same is true for their homologically bounded,
  bounded below or bounded above subcategories and the subcategories
with coherent homology. 
\item The natural functor $D_b(\Coh(X)) \to D_{b,c}(\Qcoh(X))$ 
is an equivalence.
\item If $X\in \Reg$, then the natural functor $D(\bVect{X}) \to D_{b,c}(X)$ is an equivalence of categories.
\end{enumerate}
\end{prop}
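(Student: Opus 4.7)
For point (1), the plan is to leverage the fact that on a separated noetherian $X$ every injective of $\Qcoh(X)$ remains injective as an $\cO_X$-module, so that quasi-coherent injective resolutions compute the same $\Hom$-groups in $D(\Qcoh(X))$ and in $D(X)$. This yields full faithfulness and essential surjectivity in the bounded-below setting directly, and sub-conditions on bounded, bounded-above, or coherent homology follow for free, since they concern only the homology sheaves. For the unbounded case I would invoke either B\"okstedt--Neeman (applicable since $X$ is quasi-compact and separated) or the existence of K-injective resolutions in $\Qcoh(X)$, both of which upgrade the comparison to unbounded derived categories. In practice one simply cites \cite[Proposition 3.9.6]{Lipman_pre}, which packages all of this cleanly.

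For point (2), the plan is an induction on amplitude showing that every object of $D_{b,c}(\Qcoh(X))$ is quasi-isomorphic to a bounded complex of coherent sheaves. The key input is that on a noetherian scheme every quasi-coherent sheaf is a filtered union of its coherent subsheaves (\cite[5.3.6]{EGA1}). Combined with the coherence of the top homology, one replaces the top term by a coherent subsheaf that still computes the top cohomology, and a good truncation reduces the amplitude by one. Full faithfulness is then a calculus-of-fractions argument, using the same filtered-union property to promote a roof of bounded quasi-coherent complexes representing a given morphism into a roof of bounded coherent complexes, and similarly for the relation identifying two such roofs.

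For point (3), with $X$ regular, the plan rests on the fact that coherent sheaves have locally finite projective dimension (Auslander--Buchsbaum / Serre), so one can inductively replace a bounded complex of coherent sheaves by a bounded complex of locally free coherent sheaves: at each step, lift a surjection from a locally free coherent sheaf onto the top term of the complex, thereby reducing amplitude by one. The main obstacle is ensuring the process terminates globally in finitely many steps, which requires a uniform bound on projective dimension across $X$; this is where regularity together with finite Krull dimension on each connected component enters, through the Syzygy theorem. Full faithfulness then reduces to the observation that a quasi-isomorphism between bounded complexes of locally free coherent sheaves can be inverted up to chain homotopy, using projectivity termwise, so that the localization by quasi-isomorphisms of $\bVect{X}$ embeds fully faithfully into $D_{b,c}(X)$.
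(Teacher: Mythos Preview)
Your sketches for (1) and (2) are sound and essentially match the paper's references: the paper cites B\"okstedt--Neeman for (1), and for (2) invokes Keller together with the Verdier-type induction on amplitude you describe for essential surjectivity.

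For (3), however, there is a genuine gap in your full faithfulness argument. You write that a quasi-isomorphism between bounded complexes of locally free coherent sheaves can be inverted up to chain homotopy ``using projectivity termwise''. This is false once $X$ is not affine: locally free sheaves are \emph{not} projective objects in $\Qcoh(X)$ or $\Mod(X)$. Concretely, on $\bbP^1$ the Euler sequence gives a bounded acyclic complex
\[
0 \to \cO(-2) \to \cO(-1)^{\oplus 2} \to \cO \to 0
\]
of locally free sheaves which is not null-homotopic, since $\Hom(\cO,\cO(-1))=0$. Hence the cone of a quasi-isomorphism in $\bVect{X}$ need not be contractible, and $K^b(\mathrm{Vect}(X)) \to D_{b,c}(X)$ is not fully faithful before localizing.

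What does work is to push the resolution argument further: given a roof $P \leftarrow C \to Q$ in $D(X)$ with $P,Q \in \bVect{X}$, the object $C$ has bounded coherent homology, so by the same finite locally-free resolution step you use for essential surjectivity one can dominate $C$ by an object of $\bVect{X}$; this replaces the roof by one living entirely in $\bVect{X}$, and likewise for relations. The paper organizes this differently, factoring
\[
D(\bVect{X}) \to D(\bCoh{X}) \to D_b(\Coh(X)) \to D_{b,c}(\Qcoh(X)) \to D_{b,c}(X)
\]
and citing Quillen for the first arrow (finite locally free resolutions), Keller's Lemma~11.7 for the second, and points (2) and (1) for the remaining two. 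Keller's lemma is precisely the abstract statement that passing from a sufficiently nice exact subcategory to the ambient abelian category induces an equivalence on bounded derived categories, and it absorbs the full faithfulness issue cleanly without any appeal to termwise projectivity.

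A secondary point: you correctly observe that terminating the resolution needs a uniform bound on projective dimension, and you invoke finite Krull dimension on each connected component. Be aware that ``noetherian'' alone does not guarantee this (Nagata's examples), so if you want to argue this way you should make that hypothesis explicit or check what the paper's standing conventions on $\Reg$ actually provide.
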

\begin{proof}
Point (1) is \cite[Cor. 5.5]{Bokstedt93}.
In point (2), fully faithful follows
from \cite[Theorem 12.1]{Keller96}, second part: For affine schemes, use \cite[Example 12.3]{Keller96}. In general, take a finite affine cover of the noetherian scheme $X$ and then take the direct sum over the coherent sheaves on $X$ obtained by extending the coherent sheaves on the affine subschemes using \cite[I.6.9.7]{EGA1Springer}. 
Essentially surjective can be found
in \cite[Example 2.5.2]{Gille01_pre}
which follows from \cite[Lemme 2.1.2.c)]{Verdier96}
and an induction argument.
Point (3) can then be proved as follows. Let $\bCoh{X}$ be the category of bounded complexes of coherent sheaves. Decompose the functor $D(\bVect{X}) \to D_{b,c}(X)$ as
$$\xymatrix@C=2ex{D(\bVect{X}) \ar[r] & D(\bCoh{X}) \ar[r] & D_b(\Coh(X)) \ar[r] & D_{b,c}(\Qcoh(X)) \ar[r] & D_{b,c}(X).}$$
All these functors are equivalences of categories: the first one by the fact since $X$ is regular, every coherent sheaf has a finite resolution by locally free sheaves by \cite[§7 Point 1]{Quillen73}, the second one by \cite[Lemma 11.7]{Keller96}, the third one by Point (2) and the fourth by Point (1). 
\end{proof}

\begin{prop} \label{injectivesQcoh_prop}
Let $X \in \Sch$.
\begin{enumerate}
\item The category $\Qcoh(X)$ has enough injectives by \cite[B.3]{Thomason90}.
\item The natural inclusion $\Qcoh(X) \to \Mod(X)$ preserves injectives by \cite[B.4]{Thomason90}.
\item Every bounded below complex of quasi-coherent $\cO_X$-modules admits a quasi-isomorphism into a complex of $\Qcoh(X)$-injectives by (1) and \cite[I.4.6]{Hartshorne66}.
\item Every bounded below complex of $\Qcoh(X)$-injectives is q-injective in both $K(\Qcoh(X))$ and $K(X)$ by (2). 
\end{enumerate}
\end{prop}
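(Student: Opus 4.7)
The statement bundles four standard facts about the interaction between $\Qcoh(X)$ and $\Mod(X)$, and the plan is to verify the applicability of the cited references and explain how the four points chain together. Points (1)--(3) are classical for any noetherian (in fact quasi-compact, quasi-separated) scheme, and point (4) is a general abelian-category fact upgraded to $K(X)$ via point (2).

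For (1), the plan is to check that $\Qcoh(X)$ is a Grothendieck abelian category: filtered colimits are exact (which can be checked on stalks) and a generator is obtained from a finite affine cover of $X$ by taking direct sums of extensions by zero of structure sheaves on affine opens, in the spirit of the construction already recalled in Proposition \ref{equivCat_prop} above. Grothendieck's theorem then supplies enough injectives, which is the content of \cite[B.3]{Thomason90}.

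Point (2) is the main obstacle. The plan is: take $I$ injective in $\Qcoh(X)$, an $\cO_X$-module inclusion $A \hookrightarrow B$ and a morphism $A \to I$; by Zorn's lemma reduce to a maximal submodule $A \subseteq A' \subseteq B$ to which the morphism extends. If $A' \neq B$, pick a local section $b$ of $B$ outside $A'$ and use the noetherian hypothesis to produce a quasi-coherent subsheaf of $B$ strictly larger than the quasi-coherent hull of $A'$; injectivity of $I$ in $\Qcoh(X)$ then extends the map further, contradicting maximality. This is the argument given in \cite[B.4]{Thomason90}, and the noetherian hypothesis enters precisely to ensure that sums and intersections of quasi-coherent subsheaves remain quasi-coherent in a controlled way.

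For (3), the plan is a routine termwise resolution: starting from a bounded-below complex $C^\bullet$ with $C^i = 0$ for $i < n$, embed $C^n$ in an injective $I^n$ using (1), form the pushout with $d \colon C^n \to C^{n+1}$, embed that in $I^{n+1}$, and iterate to produce a quasi-isomorphism $C^\bullet \to I^\bullet$ of bounded-below complexes, as in \cite[I.4.6]{Hartshorne66}. For (4), the plan is to invoke the classical fact that a bounded-below complex of injectives in any abelian category is q-injective (a standard dimension-shift argument bounding morphisms from an acyclic complex degree by degree); applied first in $K(\Qcoh(X))$ it gives q-injectivity there, and by (2) the same complex consists of injective $\cO_X$-modules, so the same classical argument applied in the category $\Mod(X)$ yields q-injectivity in $K(X)$.
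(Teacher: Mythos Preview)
Your proposal is essentially correct and mirrors the paper, whose ``proof'' consists entirely of the citations embedded in the statement itself; you simply unpack what each reference provides and how (1)--(4) chain together, which is exactly the intended reading.

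One caveat on your sketch for (2): the Zorn argument as you outline it does not quite close. The maximal extension $A'\subseteq B$ lives in $\Mod(X)$, not in $\Qcoh(X)$, and there is in general no ``quasi-coherent hull'' of an arbitrary $\cO_X$-submodule; moreover, even after producing a quasi-coherent $C\subseteq B$ not contained in $A'$, the extension problem $A'\cap C \hookrightarrow C$ need not be a diagram in $\Qcoh(X)$, so injectivity of $I$ there cannot be invoked directly. The usual route (and what the cited references ultimately rely on, cf.\ also \cite[II.7]{Hartshorne66}) is the structure theorem for injectives in $\Qcoh(X)$ on a locally noetherian scheme: every such injective is a direct sum of sheaves of the form $i_{x,*}\widetilde{E_x}$ with $E_x$ an injective $\cO_{X,x}$-module, and for these one checks injectivity in $\Mod(X)$ directly via $\Hom_{\cO_X}(M,i_{x,*}\widetilde{E_x})\cong\Hom_{\cO_{X,x}}(M_x,E_x)$. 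Since you correctly defer to \cite[B.4]{Thomason90} for the actual proof, this does not affect the validity of your write-up, but your paraphrase of that argument should be adjusted.
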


\begin{coro} \label{qinjBounded_coro}
Let $X \in \Sch$. On objects in $D_{-,qc}(X)$ or $D_-(X)$, the unbounded right derived functors computed by q-injective resolutions (as in \cite{Spaltenstein88}) coincide with the more classical bounded below right derived functors computed by using resolutions by bounded below complexes of injectives (as in \cite{Hartshorne66}). 
\end{coro}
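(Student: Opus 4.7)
The plan is to observe that the two derived functors agree because they can be computed using the \emph{same} resolution. More precisely, I will show that any bounded below complex $A$ admits a resolution which simultaneously qualifies as a classical bounded below injective resolution \emph{and} as a q-injective resolution in the sense of Spaltenstein, so that plugging this resolution into the underlying functor yields the common value of both derived functors.

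First, let $A \in D_{-,qc}(X)$ (the argument for $A \in D_-(X)$ is entirely parallel, using injectives in $\Mod(X)$ directly instead of in $\Qcoh(X)$ followed by Proposition \ref{injectivesQcoh_prop}(2)). By Proposition \ref{injectivesQcoh_prop}(3) applied to a bounded below representative of $A$ in $\Qcoh(X)$, there is a quasi-isomorphism $A \to I$ where $I$ is a bounded below complex of $\Qcoh(X)$-injectives. Using Proposition \ref{injectivesQcoh_prop}(2), $I$ is also a bounded below complex of injectives in $\Mod(X)$.

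Next, by Proposition \ref{injectivesQcoh_prop}(4), this $I$ is q-injective in $K(X)$ (and also in $K(\Qcoh(X))$). Hence $I$ is an admissible resolution in both senses: it is a bounded below injective resolution in the sense of \cite{Hartshorne66}, and it is a q-injective resolution in the sense of \cite{Spaltenstein88}. Both the classical bounded below right derived functor and the Spaltenstein unbounded right derived functor of a left exact functor $F$ are, by construction, computed by applying $F$ termwise to such a resolution. Thus both send $A$ to $F(I)$, and the induced canonical structural morphisms in the derived category agree, so the two derived functors coincide as couples (functor plus comparison natural transformation).

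There is no real obstacle here: the argument is a tautological consequence of Proposition \ref{injectivesQcoh_prop}, whose content is precisely that bounded below complexes of $\Qcoh(X)$-injectives on a noetherian separated scheme satisfy both conditions simultaneously. The only minor point worth spelling out is the uniqueness: by the universal property of derived functors, any two extensions of $F \circ q$ (with $q: K(X) \to D(X)$) to the derived category which agree up to natural isomorphism on a class of resolutions computing both of them must agree canonically on the subcategory generated by such objects, which in our case is all of $D_{-,qc}(X)$ (resp. $D_-(X)$).
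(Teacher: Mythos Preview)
Your proof is correct and follows essentially the same approach as the paper: both argue that a bounded below complex of $\Qcoh(X)$-injectives (or $\Mod(X)$-injectives in the $D_-(X)$ case) is simultaneously a classical injective resolution and a q-injective resolution, so the two derived functors are computed on the same object. The paper's proof is simply a terser version of yours, invoking Proposition~\ref{injectivesQcoh_prop} in one sentence.
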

\begin{proof}
By the proposition, any object $A \in D_{-,qc}(X)$ is quasi-isomorphic to a bounded below complex of $\Qcoh(X)$-injectives which are also $\Mod(X)$-injective (resp. a bounded below complex of $\Mod(X)$-injectives) and this complex is q-injective. 
\end{proof}

Similarly, for q-flat resolutions, we have a weaker statement, sufficient for our purposes.

\begin{prop} \label{qflatBounded_prop}
Let $X \in \Sch$. On objects in $D_{+,qc}(X)$ or $D_+(X)$, the unbounded left derived functors computed using q-flat resolutions can be computed by using bounded above resolutions consisting of flat $\cO_X$-modules.
\end{prop}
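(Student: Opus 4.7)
The plan is to mirror the proof of Corollary \ref{qinjBounded_coro}, replacing q-injective/injective by q-flat/flat and bounded below by bounded above. For $A$ in $D_+(X)$ or $D_{+,qc}(X)$ I would construct an explicit q-flat resolution of $A$ that consists of a bounded above complex of flat $\cO_X$-modules; by the essential uniqueness of q-flat resolutions, this will then compute the same left derived functor as the functorial q-flat resolution of Spaltenstein--Lipman \cite[2.5.5]{Lipman_pre}.

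The first step is to build a bounded above complex $P$ of flat $\cO_X$-modules with a quasi-isomorphism $P \to A$. Since every $\cO_X$-module admits a surjection from a free (hence flat) $\cO_X$-module, one proceeds by descending induction starting at the top nonzero degree of a representative of $A$, at each stage lifting to a flat sheaf that surjects onto the relevant kernel. This is the standard bounded above flat resolution of a bounded above complex; no quasi-coherence of the $P^n$ is required, so the construction works equally well in $D_+(X)$ and in $D_{+,qc}(X)$.

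By the example stated right after the definition of q-flat in the appendix, any bounded above complex of flat $\cO_X$-modules is q-flat, so such a $P$ qualifies as a q-flat resolution of $A$. It remains to compare $P$ with the functorial q-flat resolution $Q_A \to A$ used to define the unbounded left derived functor: the identity on $A$ lifts in the usual way to a quasi-isomorphism relating $P$ and $Q_A$, and since tensoring with a q-flat complex preserves quasi-isomorphisms, applying any triangulated functor $F$ derived via q-flat resolutions yields a canonical isomorphism $F(P) \simeq F(Q_A)$ in the derived category. The main technical point is the inductive construction of $P$, but this is entirely classical; the rest is a formal consequence of the definition of q-flatness.
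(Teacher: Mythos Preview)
Your approach is exactly the paper's: the paper's entire proof is the single sentence ``This follows from the fact that any $\cO_X$-module is a quotient of a flat one (see \cite[2.5.5]{Lipman_pre}),'' and you have correctly spelled out the standard construction that this sentence abbreviates.

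One small slip: you write that ``every $\cO_X$-module admits a surjection from a free (hence flat) $\cO_X$-module.'' This is false for sheaves in general --- for instance, $\cO(-1)$ on $\bbP^1$ has no nonzero global sections and hence is not a quotient of any free $\cO_X$-module. The correct statement, and the one the paper invokes, is that every $\cO_X$-module is a quotient of a \emph{flat} one (using direct sums of sheaves of the form $j_!\cO_U$ for open $U\subseteq X$, as in \cite[2.5.5]{Lipman_pre}). Replace ``free'' by ``flat'' and your argument goes through unchanged.
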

\begin{proof}
This follows from the fact that any $\cO_X$-module is a quotient of a flat one (see \cite[2.5.5]{Lipman09}). 
\end{proof}

\bibliography{PushforwardWitt}

\end{document}